\documentclass[a4paper,11pt,reqno]{amsart}

\usepackage{fullpage}

\usepackage{dsfont,amssymb,amsmath, amsfonts, amsthm, mathtools, color, mathalfa, mathrsfs}

\usepackage{hyperref} 

\usepackage{tikz}
\usepackage{calc}
\usepackage{ifthen}

\usepackage[alphabetic,numeric,abbrev,msc-links]{amsrefs}

\DefineSimpleKey{bib}{how}

\usepackage[OT2,T1]{fontenc}
\DeclareSymbolFont{cyrletters}{OT2}{wncyr}{m}{n}
\DeclareMathSymbol{\Sha}{\mathalpha}{cyrletters}{"58}

\allowdisplaybreaks[4]

\numberwithin{equation}{section}

\newtheorem{theorem}{Theorem}[section]
\newtheorem{corollary}[theorem]{Corollary}

\newtheorem{lemma}[theorem]{Lemma}
\newtheorem{proposition}[theorem]{Proposition}

\theoremstyle{remark}
\newtheorem{remark}[theorem]{Remark}
\newtheorem{example}[theorem]{Example}
\theoremstyle{definition}
\newtheorem{definition}[theorem]{Definition}

\newcommand\bp{\begin{proof}}
\newcommand\ep{\end{proof}}

\newcommand\Dhat{{\hat\Delta}}

\newcommand\Ad{\operatorname{Ad}}
\newcommand\Aff{\operatorname{Aff}}
\newcommand\Aut{\operatorname{Aut}}

\newcommand\HS{\operatorname{HS}}

\newcommand\Op{\operatorname{Op}}

\newcommand{\R}{{\mathbb R}}
\newcommand\T{{\mathbb T}}

\newcommand{\F}{{\mathcal F}}
\newcommand{\G}{{\mathcal G}}

\newcommand{\CJ}{\mathcal J}

\newcommand{\CH}{\mathcal H}

\newcommand{\CF}{\mathcal F}

\newcommand{\K}{\mathbb K}
\newcommand{\E}{\mathbb E}

\newcommand{\GL}{\operatorname{GL}}
\newcommand{\Ind}{\operatorname{Ind}}

\newcommand{\vf}{\varphi}

\newlength{\meanderUnitlength}
\newlength{\meanderThickness}
\newcounter{meanderXsep}
\newcounter{meanderYsep}
\newcounter{meanderSum}
\newcounter{meanderTopMax}
\newcounter{meanderBottomMax}
\newcounter{meanderYlevel}
\newcounter{meanderXsize}
\newcounter{meanderYsize}
\newcounter{meanderBulletSize}
\newcounter{meanderSnode}
\newcounter{meanderArcNumber}
\newcounter{meanderloopcount}
\newcounter{meanderA}
\newcounter{meanderB}
\newcounter{meanderM}
\newcounter{meanderH}
\newcounter{meanderarcstyle}
\newcommand{\resetmeanderdefaults}{%
\setlength{\meanderUnitlength}{\unitlength} 
\setlength{\meanderThickness}{0.6pt}
\setcounter{meanderXsep}{24}
\setcounter{meanderYsep}{10}
\setcounter{meanderBulletSize}{3}
\setcounter{meanderarcstyle}{0}
}
\resetmeanderdefaults
\newcommand{\meander}[2]{%
\setcounter{meanderTopMax}{0}
\setcounter{meanderBottomMax}{0}
\setcounter{meanderSum}{0}
\foreach \x in {#1}
	{
	\addtocounter{meanderSum}{\x}
	\ifthenelse{\x > \themeanderTopMax}{\setcounter{meanderTopMax}{\x}}{}
	}
\foreach \x in {#2}
	{
	\ifthenelse{\x > \themeanderBottomMax}{\setcounter{meanderBottomMax}{\x}}{}	
	}
\setcounter{meanderYlevel}{\themeanderYsep*\themeanderBottomMax}
\setcounter{meanderXsize}{\themeanderXsep*\themeanderSum-\themeanderXsep}
\setcounter{meanderYsize}{\themeanderYsep*(\themeanderTopMax +\themeanderBottomMax)}
\setlength{\unitlength}{\meanderUnitlength}
\begin{array}{c}
\begin{picture}(\themeanderXsize,\themeanderYsize)
\linethickness{\meanderThickness}
\ifthenelse{\themeanderBulletSize>0}{\multiput(0,\themeanderYlevel)(\themeanderXsep,0){\themeanderSum}{\circle*{\themeanderBulletSize}}}{}
\setcounter{meanderSnode}{0}
\foreach \x in {#1}
	{
	\addtocounter{meanderSnode}{1}
	\ifthenelse{\isodd{\x}}{\setcounter{meanderArcNumber}{(\x-1)/2}}{\setcounter{meanderArcNumber}{\x/2}}
	\setcounter{meanderloopcount}{0}
	\whiledo{\themeanderloopcount < \themeanderArcNumber}
		{
		\setcounter{meanderA}{(\themeanderSnode+\themeanderloopcount-1)*\themeanderXsep}
		\setcounter{meanderB}{(\themeanderSnode+\x - \themeanderloopcount-2)*\themeanderXsep}
		\setcounter{meanderM}{(\themeanderA+\themeanderB)/2}
		\setcounter{meanderH}{\themeanderYsep*(\x-2*\themeanderloopcount-1)+\themeanderYlevel}
		\ifthenelse{\themeanderarcstyle=1}%
			{
			\qbezier(\themeanderA,\themeanderYlevel)(\themeanderM,\themeanderH)(\themeanderM,\themeanderH)
			\qbezier(\themeanderM,\themeanderH)(\themeanderB,\themeanderYlevel)(\themeanderB,\themeanderYlevel)
			}
			{
			\qbezier(\themeanderA,\themeanderYlevel)(\themeanderA,\themeanderH)(\themeanderM,\themeanderH)
			\qbezier(\themeanderM,\themeanderH)(\themeanderB,\themeanderH)(\themeanderB,\themeanderYlevel)
			}
		\addtocounter{meanderloopcount}{1}
		}
	\addtocounter{meanderSnode}{\x-1}
	}
\setcounter{meanderSnode}{0}
\foreach \x in {#2}
	{
	\addtocounter{meanderSnode}{1}
	\ifthenelse{\isodd{\x}}{\setcounter{meanderArcNumber}{(\x-1)/2}}{\setcounter{meanderArcNumber}{\x/2}}
	\setcounter{meanderloopcount}{0}
	\whiledo{\themeanderloopcount < \themeanderArcNumber}
		{
		\setcounter{meanderA}{(\themeanderSnode+\themeanderloopcount-1)*\themeanderXsep}
		\setcounter{meanderB}{(\themeanderSnode+\x - \themeanderloopcount-2)*\themeanderXsep}
		\setcounter{meanderM}{(\themeanderA+\themeanderB)/2}
		\setcounter{meanderH}{-\themeanderYsep*(\x-2*\themeanderloopcount-1)+\themeanderYlevel}
		\ifthenelse{\themeanderarcstyle=1}%
			{
			\qbezier(\themeanderA,\themeanderYlevel)(\themeanderM,\themeanderH)(\themeanderM,\themeanderH)
			\qbezier(\themeanderM,\themeanderH)(\themeanderB,\themeanderYlevel)(\themeanderB,\themeanderYlevel)
			}
			{
			\qbezier(\themeanderA,\themeanderYlevel)(\themeanderA,\themeanderH)(\themeanderM,\themeanderH)
			\qbezier(\themeanderM,\themeanderH)(\themeanderB,\themeanderH)(\themeanderB,\themeanderYlevel)
			}
		\addtocounter{meanderloopcount}{1}
		}
	\addtocounter{meanderSnode}{\x-1}
	}
\end{picture} \\
\end{array}
}
\newcommand{\meanderstyle}[1]{%
\ifthenelse{\equal{#1}{plain}}{\setcounter{meanderBulletSize}{0}\setcounter{meanderarcstyle}{0}}{}
\ifthenelse{\equal{#1}{default}}{\setcounter{meanderBulletSize}{3}\setcounter{meanderarcstyle}{0}}{}
\ifthenelse{\equal{#1}{line}}{\setcounter{meanderBulletSize}{3}\setcounter{meanderarcstyle}{1}}{}
\ifthenelse{\equal{#1}{plainline}}{\setcounter{meanderBulletSize}{0}\setcounter{meanderarcstyle}{1}}{}
}
\newcommand{\meandersize}[1]{%
\ifthenelse{\equal{#1}{Huge}}{\setlength{\meanderUnitlength}{1.8pt}}{}
\ifthenelse{\equal{#1}{huge}}{\setlength{\meanderUnitlength}{1.6pt}}{}
\ifthenelse{\equal{#1}{Large}}{\setlength{\meanderUnitlength}{1.4pt}}{}
\ifthenelse{\equal{#1}{large}}{\setlength{\meanderUnitlength}{1.2pt}}{}
\ifthenelse{\equal{#1}{default}}{\setlength{\meanderUnitlength}{1.0pt}}{}
\ifthenelse{\equal{#1}{Small}}{\setlength{\meanderUnitlength}{0.8pt}}{}
\ifthenelse{\equal{#1}{small}}{\setlength{\meanderUnitlength}{0.6pt}}{}
\ifthenelse{\equal{#1}{Tiny}}{\setlength{\meanderUnitlength}{0.4pt}}{}
\ifthenelse{\equal{#1}{tiny}}{\setlength{\meanderUnitlength}{0.2pt}}{}
\ifthenelse{\equal{#1}{minuscule}}{\setlength{\meanderUnitlength}{0.1pt}}{}
}
\newcommand{\meanderthickness}[1]{%
\ifthenelse{\equal{#1}{Thin}}{\setlength{\meanderThickness}{0.2pt}}{}
\ifthenelse{\equal{#1}{thin}}{\setlength{\meanderThickness}{0.4pt}}{}
\ifthenelse{\equal{#1}{default}}{\setlength{\meanderThickness}{0.6pt}}{}
\ifthenelse{\equal{#1}{thick}}{\setlength{\meanderThickness}{0.8pt}}{}
\ifthenelse{\equal{#1}{Thick}}{\setlength{\meanderThickness}{1pt}}{}
\ifthenelse{\equal{#1}{wide}}{\setlength{\meanderThickness}{1.2pt}}{}
\ifthenelse{\equal{#1}{Wide}}{\setlength{\meanderThickness}{1.4pt}}{}
}

\begin{document}

\date{April 9, 2026}

\title{Kohn--Nirenberg quantization of the affine group and related examples}

\author[Bieliavsky]{Pierre Bieliavsky}
\email{Pierre.Bieliavsky@uclouvain.be}
\address{Institut de Recherche en Math\'ematique et Physique, Universit\'e Catholique de Louvain, Chemin du Cyclotron, 2, 1348 Louvain-la-Neuve, Belgium}

\author[Gayral]{Victor Gayral}
\email{victor.gayral@univ-reims.fr}
\address{Laboratoire de Math\'ematiques, CNRS UMR 9008, Universit\'e de Reims Champagne-Ardenne,
Moulin de la Housse - BP 1039,
51687 Reims, France}

\author[Neshveyev]{Sergey Neshveyev}
\email{sergeyn@math.uio.no}
\address{Department of Mathematics, University of Oslo,
P.O. Box 1053 Blindern, NO-0316 Oslo, Norway}

\author[Tuset]{Lars Tuset}
\email{larst@oslomet.no}
\address{Department of Computer Science, OsloMet - storbyuniversitetet,
P.O. Box 4 St. Olavs plass, NO-0130 Oslo, Norway}

\begin{abstract}
We show how to construct unitary dual $2$-cocycles  for a class of semidirect products that exhibit many similarities with the affine
group ${\rm Aff}(V)=\GL(V)\ltimes V$ of a finite dimensional vector space over a local skew field. The primary
source of examples comes from Lie groups  whose Lie algebras are Frobenius seaweeds.
The construction builds on our earlier results~\cite{BGNT3} and relies heavily on representation theory and an associated quantization procedure of Kohn--Nirenberg type.

On the technical side, the key point is the observation that any semidirect product  $G=H\ltimes V$ in our class can be presented
as a double crossed product $G=P\bowtie N$ with respect to which the unique square-integrable irreducible representation of $G$ takes a particularly nice form. The Kohn--Nirenberg quantization that we construct is intimately related to a scalar Fourier transform
$\CF\colon L^2(N)\to L^2(P)$ intertwining the left regular representations  of $P$ and $N$ with representations defined  by the dressing transformations.
\end{abstract}

\maketitle

\section*{Introduction}

In this article we continue our project, launched in \cite{BGNT3} and developed further in \cites{GM, BGNT4}, of quantizing certain classes of locally compact groups in the analytic setting.
Given a locally compact group $G$, the aim is to construct a unitary dual $2$-cocycle, that is, a unitary element  $\Omega$ of the group von Neumann
algebra  $W^*(G\times G)$ satisfying the cocycle relation
$$
(\Omega\otimes1)(\Dhat\otimes\iota)(\Omega)=(1\otimes\Omega)(\iota\otimes\Dhat)(\Omega).
$$
Thanks to the seminal  work of De Commer \cite{DC}, it is known that the von Neumann bialgebra $\hat G_\Omega:=(W^*(G),\Omega\Dhat(\cdot)\Omega^*)$
defines a locally compact
quantum group in the sense of Kustermans and Vaes \cites{KV1,KV2}, that is,  $\hat G_\Omega$ comes with invariant weights.

As explained in \cite{BGNT3}, such dual cocycles can be obtained by the following procedure. Assume we are given a square-integrable irreducible projective representation $\pi\colon G\to PU(\CH)$ with $2$-cocycle $\omega\in Z^2(G;\T)$. Assume also that the twisted group von Neumann algebra $W^*(G;\omega)$ is a type I factor and that  we are given a unitary equivariant quantization map
\begin{align*}
\Op\in U\big(L^2(G),\HS(\CH)\big), \quad\text{so that}\quad \pi(g)\Op(f)\pi(g)^*=\Op(\lambda_gf),
\end{align*}
where $\HS(\CH)$ is the Hilbert space of Hilbert--Schmidt operators acting on $\CH$.
Equivalently, this means  that $(B(\CH),\Ad\pi)$ is a $G$-Galois object and that we have a unitary equivalence of representations $\Ad\pi\sim\lambda$.
Under these assumptions, the following  defines a unitary dual $2$-cocycle:
$$
\Omega:= (\CJ\otimes\CJ)\,\G^*\,  (1\otimes\CJ)\, \hat W.
$$
In this formula  $\CJ$ is the unitary operator on $L^2(G)$ associated to the group inversion, $\hat W$ is the multiplicative unitary of the dual
(quantum) group $\hat G=(W^*(G),\Dhat)$ and
$\G\colon L^2(G)\otimes L^2(G)\to L^2(G)\otimes L^2(G)$ is the unitary Galois map of the Galois object $(B(\CH),\Ad\pi)$. Explicitly, with $\Delta$ the modular function of $G$ and with $D$   the Duflo--Moore operator  of the projective representation~$\pi$,  it is given  by
$$
\big(\G(f_1\otimes f_2)\big)(g,h)=\Delta(g)^{-1/2}\Op^*\big(\Op(\lambda_{g}f_1)D^{-1/2}\Op(f_2)\big)(h).
$$

Note that when $\pi$ is a genuine representation, it is not difficult to show that  a unitary equivariant quantization map always exists (see \cite{BGNT3}*{Theorem 2.13}). However, an explicit construction of such a quantization map remains a nontrivial task. In \cites{BGNT3, BGNT4, GM} we have constructed a variant of the so-called Kohn--Nirenberg
quantization satisfying the required properties for a class  of abelian extensions
$
0\to V\to G\to Q\to 1.
$

When the representation space $\CH$ is $L^2(X)$, for $X$ a locally compact space endowed  with
a Radon measure $dx$, a Kohn--Nirenberg type
quantization can be formally defined quite generally: for $f\in C_c(G)$, the operator $\Op(f)$ is initially defined as the sesquilinear  form on $C_c(X)$ given by the formula
\begin{align}
\label{OPKN}
\Op(f)[\vf_1,\vf_2]:=\int_G f(g)\,\overline{(\pi(g)^*\vf_1)}(x_0)  \,\bigg(\int_X (\pi(g)^*\vf_2)(x)\,\mu(x)dx\bigg)dg,
\end{align}
where $x_0\in X$ is a fixed base point and   $\mu$ is a density.
For $G=\R^{2n}$ and for $\pi$  the projective representation on $L^2(\R^n)$ given by the restriction to $\R^{2n}$ of the Schrödinger representation of the
Heisenberg group $H_n$, this formula (for $x_0=0$ and $\mu=1$) reproduces  exactly the classical Kohn--Nirenberg quantization.
We do not claim that this formula always extends to a unitary $\Op\colon L^2(G)\to \HS(L^2(X))$, but at least the equivariance property
$\pi(g)\Op(f)\pi(g)^*=\Op(\lambda_gf)$ is automatic. It should be seen as an ansatz for a unitary equivariant quantization.

\smallskip

Using this ansatz, we construct here a unitary quantization map for semidirect products $G=H\ltimes V$ satisfying the \emph{dual orbit condition of depth $\ell$} (see Definition \ref{DOCn}).
A paradigmatic  example  in this class is the full affine group ${\rm Aff}(V)=\GL(V)\ltimes V$ of a finite dimensional vector space $V$
over a local skew field $\K$ (Archimedean or not), which already exhibits all the analytical difficulties involved in the general scheme.
In this example the representation theory is entirely described by the Mackey method. In particular, if we take any  point  $\xi_0$ in the main dual orbit $\mathcal O=\hat V\setminus\{0\}$, then it stabilizer is isomorphic to ${\rm Aff}(V')$,
where ${\rm dim}(V')={\rm dim}(V)-1$, and one concludes that  ${\rm Aff}(V)$ possesses a unique class of
square-integrable irreducible representations. A representative of this class  is inductively given  by the induced representation
$$
\pi:={\Ind}_{{\rm Aff}(V')\ltimes V}^{{\rm Aff}(V)}(\pi'\otimes \xi_{0}).
$$
However, with this choice of a representative it is difficult to give a precise meaning to~\eqref{OPKN}. One of the main results of this paper is another construction of $\pi$ that is much better suited for this task.

The crucial observation is that
any group $G$ satisfying the dual orbit condition of depth $\ell$ admits a double crossed product presentation $G=P\bowtie N$ and
 the closed subgroup $N$ always carries a nontrivial unitary character $\chi$. It turns out  that the Mackey representation is unitarily equivalent to
 ${\Ind}_{N}^G(\chi)$ and that the Kohn--Nirenberg quantization \eqref{OPKN} for this choice of representative is intimately related to a unitary
 scalar Fourier transform $\CF:L^2(N)\to L^2(P)$, which intertwines the left regular representations  of $P$ and $N$ with representations defined  by the dressing transformations.

In the case of the affine group $\Aff(V)$, the group $P$ is isomorphic to the parabolic group of  triangular matrices of size ${\rm dim}(V)$, and $N$
is isomorphic to the nilpotent group of  unitriangular matrices of size ${\rm dim}(V)+1$. This decomposition already appears in \cite{Medina}
for the connected affine group over the reals.

\section{Setup}

\subsection{Notation}
Let $G$ be a locally compact group, always assumed to be second countable. We fix a left-invariant Haar measure $dg$ on $G$.
The modular function $\Delta_G$ is defined by the relation
$$
\int_G f(gh)\,dg=\Delta_G(h)^{-1}\int_G f(g)\,dg\ \ \text{for}\ \ f\in C_c(G).
$$
In a similar way, for a continuous automorphism $\tau\in\Aut(G)$, its modulus $|\tau|_G$  is defined by the identity
$$
\int_Gf(\tau(g))\,dg=|\tau|_G^{-1}\int_Gf(g)\,dg\ \ \text{for}\ \ f\in C_c(G).
$$
When the automorphism comes from the conjugation $${\bf C}_x(g):=xgx^{-1}$$ by an element $x\in L$ of a group $L$
containing $G$ as a closed normal subgroup,  we use the shorthand notation $|x|_G$ for $|{\bf C}_x|_G$.
The multiplicative unitary $W_G\colon L^2(G\times G)\to L^2(G\times G)$ of~$G$ is defined by
$$
(W_Gf)(g,h):=f(g,g^{-1}h),
$$
and $\lambda$ and $\rho$  denote the left and right regular  representations of $G$ on $L^2(G)$:
\begin{align*}
(\lambda_gf)(h)=f(g^{-1}h)\quad\mbox{and}\quad(\rho_gf)(h)=\Delta_G(g)^{1/2}f(hg).
\end{align*}

Let $G_1,G_2$ be two locally compact groups. If we are given a continuous homomorphism $\mu:G_1\to \Aut(G_2)$,
we can consider the semidirect product $G= G_1\ltimes G_2$, so as a set $G=G_1\times G_2$ with the group law
$(g_1,g_2)(g'_1,g'_2)=(g_1\,g'_1, g_2\,\mu_{g_1}(g_2'))$.
When convenient,  we shall regard $G_1$ and $G_2$ as closed subgroups $G$ in the standard way. Since
$(e,g_2)(g_1,e)=(g_1,g_2)$,  it is natural to parameterize elements of $G$ as $g=g_2g_1$.
In this parametrization
the extension homomorphism is given by conjugation $\mu_{g_1}(g_2)={\bf C}_{g_1}(g_2)$, while the left-invariant Haar measure
and the modular function are given by
$$
dg=\frac{dg_1dg_2}{|g_1|_{G_2}},\quad \Delta_G(g)=\frac{\Delta_{G_1}(g_1)\,\Delta_{G_2}(g_2)}{|g_1|_{G_2}}.
$$

Let $G$ be a locally compact group and  let $G_1,G_2$ be two closed subgroups of $G$.
Recall that $(G_1,G_2)$ forms a matched pair for $G$ if
 $G_1\cap G_2=\{e\}$ and  $G_1 G_2$ is a subset of full measure in~$G$. We then say that $G$ is the double crossed product of $G_1$
 and $G_2$ and we write $G=G_1\bowtie G_2$.
In this situation there exist  measurable actions
\begin{equation*}
\alpha :G_1\times G_2\to G_2\quad \mbox{and}\quad
\beta:G_2\times G_1\to G_1,
\end{equation*}
such that for almost all $g_1\in G_1$ and $g_2\in G_2$ we have the relation:
\begin{align*}
g_1g_2^{-1}=\alpha _{g_1}(g_2)^{-1}\,\beta_{g_2}(g_1).
\end{align*}
The actions $\alpha$ and $\beta$ are not  by group automorphisms, but we have nevertheless control
on the images of the products (see \cite{VV}*{Lemma 4.9}): for  $g_1,\tilde g_1\in G_1$ and $g_2,\tilde g_2\in G_2$, we have
\begin{align}
\label{CP}
\alpha_{g_1 }(g_2\tilde g_2)=\alpha_{\beta_{\tilde g_2}(g_1 )}(g_2 )\,\alpha_{g_1 }(\tilde g_2 ),\quad \beta_{g_2 }(g_1\tilde g_1)=\beta_{\alpha_{\tilde g_1}(g_2 )}(g_1 )\,
\beta_{g_2}(\tilde g_1).
\end{align}

Let now  $V$ be a locally compact Abelian group and let $\hat V$ be its Pontryagin dual.
 We will use the additive notation both on $V$ and  on $\hat V$.
We denote the duality pairing by
$$
\hat V\times V\to \T,\quad (v,\xi)\mapsto e^{i\langle\xi,v\rangle}.
$$
 This is just a notation, we do not claim that
there is an exponential function here. To be consistent,  we also use the notation
$e^{-i\langle\xi,v\rangle}:=\overline{e^{i\langle\xi,v\rangle}}=e^{i\langle-\xi,v\rangle}=e^{i\langle\xi,-v\rangle}$.
Once  a Haar  measure~$dv$ has been fixed on $V$, we normalize the Haar measure $d\xi$ on $\hat V$ so that the Fourier
transform $\CF_V$ defined by
$$
(\CF_Vf)(\xi):=\int_V e^{-i\langle\xi,v\rangle} f(v)\,dv\quad\text{for}\quad f\in L^1(V)\cap L^2(V)
$$
extends to a unitary operator from $L^2(V)$ to $L^2(\hat V)$.

Given an action $G\times V\to V$ by group automorphisms, $(g,v)\mapsto g.v$, we denote by $G\times \hat V\to \hat V$, $(g,\xi)\mapsto g^\flat\xi$, the dual action, which is
defined by the identity  $e^{i\langle g^\flat\xi,v\rangle}=e^{i\langle\xi,g^{-1}.v\rangle}$. We then have $|g|_V=|g^\flat|_{\hat V}^{-1}$.

\subsection{The class of groups} \label{ssec:class}

Let  $H$  and $V$  be  nontrivial second countable locally compact  groups, with $V$ abelian.
We assume that we are given a continuous homomorphism $H\to{\rm Aut}(V)$, so that we can form a semidirect product $G:=H\ltimes V$.

Every pair $(\mathcal O, [\pi'])$, where $\mathcal O\subset \hat V$ is an orbit for the dual action of $H$ and $\pi'$ is an  irreducible unitary representation of
 the stabilizer $G'\subset H$ of an element $\xi_0\in \mathcal O$, defines an irreducible unitary representation of $G$:
\begin{equation}
\label{MR}
{\Ind}_{G'\ltimes V}^{G}(\pi'\otimes \xi_0),
\end{equation}
where  $\pi'\otimes \xi_0$ is the representation of $G'\ltimes V$   on $\CH_{\pi'}$
given  by
$(\pi'\otimes \xi_0)(g',v)=e^{i\langle\xi_0,v\rangle} \pi'(g')$. Note that $G'$-invariance of $\xi_0$ assures that $\pi'\otimes\xi_0$ is indeed a representation. We call~\eqref{MR} a \emph{Mackey representation}.

It is known that if the action of $H$ on $V$ is regular in the sense of Mackey, meaning that there exists a Borel set in $\hat V$ that intersects each dual orbit at exactly one point (see e.g$.$ \cite{Folland}*{p.~196}), then the  unitary dual  of $G$ is fully described by the Mackey representations (\cite{Folland}*{Theorem~6.43}). While the action is going to be regular in our examples, we do not need this property for our analysis. More importantly for us, it is also known that a Mackey representation~\eqref{MR} is square-integrable if and only if $\pi'$ is square-integrable and~$\mathcal O$ has positive measure in $\hat V$ (see \cite{ACVL}*{Theorem~2}).

Our main motivating example is  the full affine group $G={\rm GL}_n(\K)\ltimes \K^n$ of a local skew field~$\K$ (Archimedean or not).
Fixing a nontrivial unitary character of $\K^n$ implementing the self-duality $\hat {\K}^n\simeq \K^n$ and
 choosing $\xi_0=(1,0,\cdots,0)$, we find out that the dual orbit is
 $ \mathcal O=\K^n\setminus\{0\}$ and that the stabilizer equals
  $$
  G'=\bigg\{\begin{pmatrix}1&0\\m&Z\end{pmatrix}: \ Z\in {\rm GL}_{n-1}(\K),\; m\in \K^{n-1}\bigg\}\simeq {\rm GL}_{n-1}(\K)\ltimes \K^{n-1}.
  $$
Since the stabilizer is trivial for $n=1$, induction shows that $G$ possesses a unique class of square-integrable irreducible unitary representations,
with a representative given by the Mackey representation \eqref{MR}.

An additional property of the affine group, which is important for the construction of our quantization,
is that $G$ possesses another closed subgroup $Q$ such that $(Q,G')$ forms   a matched pair
for $H={\rm GL}_n(\K)$ and, setting $G':=H'\ltimes V'$, that $H'={\rm GL}_{n-1}(\K)$ normalizes $Q$. Indeed, we can take
 $$
  Q:=\bigg\{\begin{pmatrix}a&x\\0&1_{n-1}\end{pmatrix}:\ a\in\K^*,\; x\in \K^{n-1}\bigg\}\simeq \K^*\ltimes \K^{n-1}.
  $$

We shall see that all these properties are also satisfied for many Lie groups whose Lie algebras are Frobenius seaweeds.
This motivates the following definition.

\begin{definition}
\label{DOCn}
We say that $(H,V)$  satisfies the {\bf dual orbit condition of depth $1$} (${\rm DOC}_1$) if
there exists an element $\xi_0\in\hat V$ such that the map
\begin{equation}
\label{Phi0}
\phi:H\to\hat V,\quad q\mapsto q^\flat\xi_0,
\end{equation}
is a measure class isomorphism.
We say that $(H,V)$  satisfies the {\bf dual orbit condition of depth} $\ell\geq 2$ (${\rm DOC}_\ell$) if
the following conditions are satisfied:
\begin{enumerate}
\item there exists an element $\xi_0\in\hat V$ whose $H$-orbit has full measure in $\hat V$ and its
 stabilizer  is of the form $G'=H'\ltimes V'$ with $V'$ abelian;
\item  there exists  another closed subgroup $Q$ of $H$ such that $(Q, G')$ forms a matched pair for~$H$
 and  such that  $H'$ normalizes $Q$;
\item the pair $(H',V')$ satisfies ${\rm DOC}_{\ell-1}$.
\end{enumerate}
  \end{definition}

\begin{remark}
A semidirect product  $G=H\ltimes V$ such that $(H,V)$ satisfies ${\rm DOC}_\ell$
has thus the form $G=(Q\bowtie G')\ltimes V$,
  where $G'=H'\ltimes V'$  and $(H',V')$ satisfies  ${\rm DOC}_{\ell-1}$.
  Of course, $(Q\ltimes V,G')$ forms a matched pair for $G$ too and therefore we also have $G=(Q\ltimes V)\bowtie G'$. Moreover,
the pair $(Q,V)$ satisfies ${\rm DOC}_1$.
  \end{remark}

\begin{lemma}\label{lem:typeI}
If $(H,V)$  satisfies the dual orbit condition of depth $\ell\ge1$, then $W^*(G)$ is a type~I factor. In particular,
$G$ has a unique up to equivalence square-integrable irreducible unitary representation.
\end{lemma}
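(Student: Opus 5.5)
We argue by induction on $\ell$, using the Mackey machine for the semidirect product $G=H\ltimes V$ in its von Neumann algebraic form. Identify $W^*(G)$ with the crossed product $L^\infty(\hat V)\rtimes H$ for the dual action $q\mapsto q^\flat$: via the Fourier transform $\CF_V$, $W^*(V)\cong L^\infty(\hat V)$, and $W^*(G)\cong W^*(V)\rtimes H$. We then want to show that this crossed product is a type I factor.

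For $\ell=1$, the map $\phi\colon H\to\hat V$, $q\mapsto q^\flat\xi_0$, is a measure class isomorphism. It intertwines the left translation action of $H$ on itself with the dual action on $\hat V$, since $\phi(hq)=h^\flat\phi(q)$. Hence $L^\infty(\hat V)\rtimes H\cong L^\infty(H)\rtimes_\lambda H$, and by the Stone--von Neumann--Mackey theorem (Takesaki duality for the translation action) the latter is $B(L^2(H))$, a type I factor.

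For $\ell\ge2$, the orbit $\mathcal O=H^\flat\xi_0$ has full measure, so $L^\infty(\hat V)=L^\infty(\mathcal O)$. The action is transitive on a conull set, hence ergodic. The main point is to identify $\mathcal O$ with $H/G'$ up to measure class, using the matched pair $H=Q\bowtie G'$. The map $Q\to\mathcal O$, $q\mapsto q^\flat\xi_0$, is injective because $Q\cap G'=\{e\}$. Its image $QG'\cdot\xi_0=(QG')^\flat\xi_0$ is the image of a conull subset of $H$, hence conull in $\mathcal O$ once we know that the orbit map $H\to\mathcal O$ pushes Haar measure class to the restriction of $d\xi$. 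This we get from the standard fact that a Borel orbit of positive measure is homeomorphic to $H/G'$ (second countability and Glimm/Effros), so that the Haar measure class on $H/G'$ corresponds to $d\xi|_{\mathcal O}$. With this identification, the imprimitivity theorem (Mackey's, in crossed product form) gives
$$
L^\infty(H/G')\rtimes H\cong B(L^2(H/G'))\bar\otimes W^*(G'),
$$
and more precisely $L^\infty(\hat V)\rtimes H\cong W^*(G')\bar\otimes B(L^2(H/G'))$, up to Morita equivalence/induction. Since $G'=H'\ltimes V'$ with $(H',V')$ satisfying ${\rm DOC}_{\ell-1}$, the induction hypothesis says that $W^*(G')$ is a type I factor, so $W^*(G)$ is a type I factor. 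We should double check that ``$W^*(V)\rtimes H$'' here matches the stabilizer algebra: the stabilizer of the point $\xi_0$ acting on $W^*(V)$ contributes the algebra $W^*(G')$, since the $V$ part is absorbed into the characters. Concretely, every normal representation corresponds to an induced representation $\Ind_{G'\ltimes V}^G(\pi'\otimes\xi_0)$ with $\pi'$ a normal representation of $W^*(G')$, and the factor/type I property is transferred.

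For the consequence: $W^*(G)\cong B(\CH)$ for some $\CH$, so $G$ has, up to equivalence, exactly one irreducible representation quasi-contained in the regular representation, namely the identity representation of $B(\CH)$, and $\lambda$ is a multiple of it. An irreducible subrepresentation of $\lambda$ exists precisely because $B(\CH)$ has minimal projections, and it is square-integrable. Conversely, every square-integrable irreducible representation is a subrepresentation of $\lambda$, hence equivalent to this one.

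The expected obstacle is the measure-theoretic identification of the full-measure orbit with $H/G'$, including the measure classes, which is needed to apply the imprimitivity theorem. Alternatively, one can avoid the orbit-topology issue entirely by using the matched pair $Q\bowtie G'$, which gives a measurable cross-section $Q$ of $H/G'$.
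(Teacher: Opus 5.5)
Your proposal is correct and is essentially the paper's argument: the chain $W^*(G)\cong H\ltimes L^\infty(\hat V)\cong H\ltimes L^\infty(H/G')\cong W^*(G')\bar\otimes B(L^2(H/G'))$ followed by induction on $\ell$, where your separate base case is just the instance $G'=\{e\}$. The one loose point is your appeal to Effros for a homeomorphism $H/G'\cong\mathcal O$. That would require knowing the orbit is locally closed, and it is unnecessary in any case: the Lusin--Souslin Borel isomorphism $H/G'\to\mathcal O$, together with uniqueness of the $H$-quasi-invariant measure class on $H/G'$, already gives the equivariant measure class isomorphism that the paper uses without comment.
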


\begin{proof}
By assumption we have an $H$-equivariant measure class isomorphism $H/G'\cong \hat V$. Hence we get the following standard isomorphisms
$$
W^*(G)\cong H\ltimes W^*(V)\cong H\ltimes L^\infty(H/G')\cong W^*(G')\bar\otimes B(L^2(H/G')),
$$
and the lemma follows by induction on $\ell$.
\end{proof}
\begin{remark}
As its proof shows, Lemma \ref{lem:typeI} remains valid even when condition~$(2)$ of Definition~\ref{DOCn} is dropped.
\end{remark}

In addition to the affine group of a local skew field ${\rm GL}_n(\K)\ltimes \K^n$, as examples of groups satisfying ${\rm DOC}_{\ell}$ for some $\ell\ge1$  we can consider
the matrix amplifications ${\rm GL}_{nk}\ltimes {\rm Mat}_{nk,k}(\K)$, with $k\in\mathbb N^*$ arbitrary.
Besides this, there are many examples of matrix groups whose  Lie algebras are Frobenius seaweeds \cite{DK}. In the list of examples given  below we closely follow the decomposition method of~\cite{Panyushev}.

\begin{example}
\label{E1}
Let $H:={\rm SL}_3(\R)\times  {\rm GL}_2(\R)$ acting on $V:={\rm Mat}_{3,2}(\R)$ by $(A,B).M:=AMB^{-1}$ and let $G:=H\ltimes V$. Identifying $\hat V$
with ${\rm Mat}_{2,3}(\R)$,   let $\xi_0:=\begin{pmatrix}0&1&0\\0&0&1\end{pmatrix}$. The stabilizer of $\xi_0$ for the dual action of $H$ on $\hat V$ is given by
$$
G'=\Bigg\{\bigg(\begin{pmatrix} {\rm det}B^{-1}&n\\0&B\end{pmatrix},B\bigg):\ B\in  {\rm GL}_2(\R), \; n\in{\rm Mat}_{1,2}(\R)\Bigg\}.
$$
Note that $G'$ is isomorphic to ${\rm GL}_2(\R)\ltimes\R^2$, but for the action given by $B.n:={\rm det}B^{-1} nB^{-1}$.
Consider now the closed subgroup of $H$ given by
$$
Q:=\Bigg\{\bigg(\begin{pmatrix} {\rm det}Z^{-1}&0\\m&Z\end{pmatrix}, 1\bigg):\ Z\in  {\rm GL}_2(\R), \; m\in{\rm Mat}_{2,1}(\R)\Bigg\}.
$$
Clearly, $(Q,G')$ forms a matched pair for $H$. Moreover, writing $G'=H'\ltimes V'$ (according to the decomposition ${\rm GL}_2(\R)\ltimes\R^2$),
we observe that $H'$ normalizes $Q$. Under the identification $\hat V'={\rm Mat}_{2,1}(\R)$,
let $\xi_0':=\begin{pmatrix}0\\1\end{pmatrix}$. Then, the stabilizer of $\xi_0'$ for the dual action of $H'$ is given by
$$
G''=\bigg\{\begin{pmatrix} a&b\\0&a^{-2}\end{pmatrix}:\ a\in\R^*,\;b\in\R \bigg\},
$$
so it is isomorphic to $\R^*\ltimes\R$, but for the action given by $a.b:=a^3 b$. Clearly, $G''$ satisfies  the condition ${\rm DOC_1}$
and one concludes that $G$ satisfies  the condition ${\rm DOC_3}$.
\end{example}

We now give other examples of subgroups of ${\rm GL}_n(\K)$ (with $\K$ any local skew field) satisfying the condition ${\rm DOC_\ell}$ for small values of $n$.

\begin{example}
\label{E2}
Consider the subgroup of ${\rm GL}_3(\K)$ given by
$$
G:=\begin{pmatrix}
*&*&0\\0&*&0\\
0&*&1\end{pmatrix}.
$$
Then  $G$
satisfies   ${\rm DOC_1}$. Indeed, we have $G\cong (\K^*)^2\ltimes \K^2$ for the action $(a,c).(b,d):=(ac^{-1} b,c^{-1} d)$.
\end{example}

\begin{example}
\label{E3}
Consider the subgroups of ${\rm GL}_4(\K)$ given by
$$
G_1:=\begin{pmatrix}
*&*&*&*\\
*&*&*&*\\
0&0&*&*\\
0&0&0&1\end{pmatrix},\;
G_2:=\begin{pmatrix}
*&*&*&*\\
*&*&*&*\\
0&0&1&*\\
0&0&0&*\end{pmatrix},\;
G_3:=\begin{pmatrix}
*&*&*&0\\
*&*&*&0\\
0&0&*&0\\
0&0&*&1
 \end{pmatrix},\;
 G_4:=\begin{pmatrix}
*&*&0&0\\
0&*&0&0\\
0&*&*&*\\
0&0&0&1
 \end{pmatrix}.
$$
Then  $G_1, G_2, G_3$  satisfy   ${\rm DOC_2}$
and $G_4$ satisfies   ${\rm DOC_1}$.
We only give a proof for $G_3$.

Consider the closed subgroups of $G_3$ given by
 $$
 H_3:= \begin{pmatrix}
*&*&0&0\\
*&*&0&0\\
0&0&*&0\\
0&0&0&1
 \end{pmatrix},
\; V_3:=\begin{pmatrix}
1&0&*&0\\
0&1&*&0\\
0&0&1&0\\
0&0&*&1
 \end{pmatrix},
\;
 G'_3:=\begin{pmatrix}
*&*&0&0\\
0&1&0&0\\
0&0&1&0\\
0&0&0&1
 \end{pmatrix},
\;
 Q_3:=\begin{pmatrix}
1&0&0&0\\
*&*&0&0\\
0&0&*&0\\
0&0&0&1
 \end{pmatrix}.
$$
We have a semi-direct product decomposition $G_3=H_3\ltimes V_3$. Identifying $\hat V_3$ with the transpose of $V_3$, we define
 $$
 \xi_0:=
\begin{pmatrix}
1&0&0&0\\
0&1&0&0\\
0&1&1&1\\
0&0&0&1
 \end{pmatrix}.
 $$
 One easily checks that   the stabilizer of $\xi_0$ is $G'_3=H'_3\ltimes V'_3\cong\K^*\ltimes\K$, that
$(Q_3,G'_3)$ forms a matched pair for $H_3$, and that $H'_3$ normalizes $Q_3$.
\end{example}

\begin{example}
\label{E5}
Consider the subgroups of ${\rm GL}_5(\K)$ given by
\begin{align*}
G_1:=\begin{pmatrix}
 *&*&*&*&*\\
 *&*&*&*&*\\
 0&0&*&*&*\\
 0&0&*&*&*\\
 0&0&0&0&1
 \end{pmatrix},\;
G_2:=\begin{pmatrix}
 *&*&*&*&0\\
 *&*&*&*&0\\
 *&*&*&*&0\\
 0&0&0&*&0\\
 0&0&0&*&1
 \end{pmatrix}&,\,
G_3:= \begin{pmatrix}
 *&*&*&*&0\\
 *&*&*&*&0\\
 0&0&*&*&0\\
 0&0&*&*&0\\
 0&0&*&*&1
 \end{pmatrix},\\
  G_4:=\begin{pmatrix}
 *&*&*&*&0\\
 *&*&*&*&0\\
 0&0&*&*&0\\
 0&0&0&*&0\\
 0&0&0&*&1
 \end{pmatrix}&,\;
G_5:= \begin{pmatrix}
 *&*&*&*&0\\
 *&*&*&*&0\\
 0&0&1&*&0\\
 0&0&0&*&0\\
 0&0&0&*&*
 \end{pmatrix}.
 \end{align*}
 Then $G_2$ satisfies  ${\rm DOC_3}$ and
 $G_1, G_3, G_4,G_5$  all satisfy    ${\rm DOC_2}$. We only give a proof for~$G_2$ and~$G_4$.

We first write   $G_2=H_2\ltimes V_2$, where
$$
 H_2:= \begin{pmatrix}
 *&*&*&0&0\\
 *&*&*&0&0\\
 *&*&*&0 &0\\
 0&0&0&*&0\\
 0&0&0&0&1
 \end{pmatrix}
\quad\mbox{and}\quad
 V_2:=\begin{pmatrix}
 1&0&0&*&0\\
 0&1&0&*&0\\
 0&0&1&*&0\\
 0&0&0&1&0\\
 0&0&0&*&1
 \end{pmatrix}.
 $$
 Then we consider the following closed subgroups of $G_2$:
 $$
 G'_2:= \begin{pmatrix}
 *&*&*&0&0\\
 *&*&*&0&0\\
 0&0&1&0&0\\
 0&0&0&1&0\\
 0&0&0&0&1
 \end{pmatrix}
\quad\mbox{and}\quad
 Q_2:= \begin{pmatrix}
 1&0&0&0&0\\
 0&1&0&0&0\\
 *&*&*&0&0\\
 0&0&0&*&0\\
 0&0&0&0&1
 \end{pmatrix}.
$$
We see that $G'_2=H'_2\ltimes V'_2\cong {\rm GL}_2(\K)\ltimes \K^2$ is the stabilizer of
 $$
 \xi_0:=
 \begin{pmatrix}
 1&0&0&0&0\\
 0&1&0&0&0\\
 0&0&1&0&0\\
 0&0&1&1&1\\
 0&0&0&0&1
 \end{pmatrix}\in \hat V_2.
 $$
Moreover, we see that $(Q_2,G'_2)$ forms a matched pair for $H_2$ and  that
 $H'_2$ normalizes $Q_2$.

 Next, consider the following closed subgroups of $G_4$:
 $$
 H_4:= \begin{pmatrix}
 *&*&0&0&0\\
 *&*&0&0&0\\
 0&0&*&* &0\\
 0&0&0&*&0\\
 0&0&0&*&1
 \end{pmatrix}
,\;
 V_4:=\begin{pmatrix}
 1&0&*&*&0\\
 0&1&*&*&0\\
 0&0&1&0&0\\
 0&0&0&1&0\\
 0&0&0&0&1
 \end{pmatrix}
,\;
Q_4:= \begin{pmatrix}
 *&*&0&0&0\\
 *&*&0&0&0\\
 0&0&1&0&0\\
 0&0&0&1&0\\
 0&0&0&0&1
 \end{pmatrix}.
 $$
 Evidently, we have
 $G_4=H_4\ltimes V_4$. Identifying $\hat V_4$ with the transpose of $V_4$, we define
 $$
 \xi_0:=
 \begin{pmatrix}
 1&0&0&0&0\\
 0&1&0&0&0\\
 1&0&1&0&0\\
 0&1&0&1&0\\
 0&0&0&0&1
 \end{pmatrix}.
 $$
One then checks that $G'_4$,  the stabilizer  of $\xi_0$,  consists of all matrices of the form
$$
 \begin{pmatrix}
 a&b&0&0&0\\
 0&c&0&0&0\\
 0&0&a&b&0\\
 0&0&0&c&0\\
 0&0&0&d&1
 \end{pmatrix},
$$
so it is isomorphic to the group given  in Example \ref{E2}.
This finishes the proof, because we have $H_4=G'_4\ltimes Q_4$.
\end{example}

\subsection{A double crossed product presentation}
We need now to put some more effort into notation in order to keep track of different subgroups  appearing in the inductive definition of
a semidirect product $G_\ell=H_\ell\ltimes V_\ell$  satisfying  ${\rm DOC}_\ell$.

By definition, there exist $G_j,H_{j},V_j,Q_j$, $j=\ell,\cdots,1$, all closed subgroups of $G_\ell$, such that~$V_j$ is abelian,
$G_j=H_{j}\ltimes V_j$ and  $H_j=Q_j\bowtie G_{j-1}$ (with $H_1=Q_1$). We also let $\xi_{0,j}\in\hat V_{j}$ be the  element in the main  $H_{j}$-orbit in $\hat V_j$
such that $G_{j-1}={\rm Stab}_{H_{j}}(\xi_{0,j})$.

Note also that  the pairs $(Q_j,V_j)$, $j=\ell,\cdots,1$, all
satisfy  ${\rm DOC}_1$, and
therefore we can write the group $G_\ell$
as an iterated double crossed product of semidirect products all satisfying ${\rm DOC}_1$:
\begin{equation}
\label{IDCP}
G_\ell=(Q_\ell\ltimes V_\ell)\bowtie\big((Q_{\ell-1}\ltimes V_{\ell-1})\bowtie\big(\cdots\bowtie(Q_1\ltimes V_1)\cdots\big)\big).
\end{equation}
However, this description has  some  technical drawbacks, so instead we are going
to write  $G_\ell$ as a double crossed product involving a  single matched pair.

Let $P_\ell$ and $N_\ell$ be the subgroups of $G_\ell$ generated respectively by the  subgroups $Q_\ell, \cdots, Q_1$ and
 by  $V_\ell, \cdots, V_1$.
By definition,  we have $V_i\subset H_j$ for $i<j$. Hence $V_i$ normalizes $V_j$ and therefore~$N_\ell$
is an iterated semidirect product of abelian factors:
$$
N_\ell=N_{\ell-1}\ltimes V_\ell=((\cdots(V_1\ltimes V_2)\ltimes\cdots)\ltimes V_{\ell-1})\ltimes V_\ell.
$$
Similarly, we have $Q_i\subset H_i\subset H_j$, for $i<j$. Hence $Q_i$ normalizes $Q_j$, so $P_\ell$
is also an iterated semidirect product:
$$
P_\ell=P_{\ell-1}\ltimes Q_\ell=((\cdots(Q_1\ltimes Q_2)\ltimes\cdots)\ltimes Q_{\ell-1})\ltimes Q_\ell.
$$
\begin{example}
For the affine group $G_\ell={\rm GL}_\ell(\K)\ltimes \K^\ell$, $P_\ell$ is the parabolic subgroup of  upper triangular matrices in ${\rm GL}_\ell(\K)$  and
 $N_\ell$   is the semidirect product of the nilpotent subgroup of lower unitriangular matrices in ${\rm GL}_\ell(\K)$ acting on $\K^\ell$.
 Hence $N_\ell$ is  isomorphic to the  group of lower unitriangular matrices in ${\rm GL}_{\ell+1}(\K)$.
  \end{example}

\begin{proposition}
We have $G_\ell=P_\ell\bowtie N_\ell$.
\end{proposition}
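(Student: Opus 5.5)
We argue by induction on $\ell$, the case $\ell=1$ being trivial: $G_1=Q_1\ltimes V_1$ with $P_1=Q_1$ and $N_1=V_1$, and a semidirect product is in particular a double crossed product (the product set is all of $G_1$ and the intersection is trivial). For the inductive step we assume $G_{\ell-1}=P_{\ell-1}\bowtie N_{\ell-1}$. We need two things: $P_\ell\cap N_\ell=\{e\}$ and $P_\ell N_\ell$ of full measure in $G_\ell$.

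\emph{Full measure.} Write $G_\ell=H_\ell\ltimes V_\ell$ and $H_\ell=Q_\ell\bowtie G_{\ell-1}$. Since $V_\ell$ is normal in $G_\ell$, a set of the form $E V_\ell$ with $E\subset H_\ell$ is conull in $G_\ell$ as soon as $E$ is conull in $H_\ell$; this follows from the product formula $dg=dh\,dv/|h|_{V_\ell}$ for the Haar measure of a semidirect product. Now
$$
P_\ell N_\ell\supset Q_\ell P_{\ell-1}N_{\ell-1}V_\ell ,
$$
because $P_\ell=P_{\ell-1}Q_\ell=Q_\ell P_{\ell-1}$, using that $Q_\ell$ is normal in $P_\ell$. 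So it suffices to show that $E:=Q_\ell P_{\ell-1}N_{\ell-1}$ is conull in $H_\ell$. By induction $P_{\ell-1}N_{\ell-1}$ is conull in $G_{\ell-1}$. The multiplication map $Q_\ell\times G_{\ell-1}\to H_\ell$ is a measure class isomorphism onto a conull set. Indeed, for a matched pair the Haar measure of $H_\ell$ restricted to $Q_\ell G_{\ell-1}$ is equivalent to the image of the product of Haar measures; this is standard, see \cite{VV}. Hence the image of $Q_\ell\times(P_{\ell-1}N_{\ell-1})$ is conull, and $E$ is conull as required.

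\emph{Trivial intersection.} Let $p\in P_\ell\cap N_\ell$. Write $p=qp'$ with $q\in Q_\ell$, $p'\in P_{\ell-1}$, and $p=nv$ with $n\in N_{\ell-1}$, $v\in V_\ell$. Both $qp'$ and $n$ lie in $H_\ell$, and the decomposition $G_\ell=H_\ell V_\ell$ is unique, so $v=e$ and $qp'=n\in N_{\ell-1}\subset G_{\ell-1}$. Then $q=n p'^{-1}\in Q_\ell\cap G_{\ell-1}=\{e\}$, so $p'=n\in P_{\ell-1}\cap N_{\ell-1}=\{e\}$ by the induction hypothesis. Hence $p=e$.

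We must also check the structural facts used above: $P_\ell=P_{\ell-1}\ltimes Q_\ell$ and $N_\ell=N_{\ell-1}\ltimes V_\ell$, with $P_{\ell-1},N_{\ell-1}\subset G_{\ell-1}$.

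\emph{The main obstacle.} For $P_\ell=P_{\ell-1}Q_\ell$ we need that $P_{\ell-1}$ normalizes $Q_\ell$. The subgroups $Q_j$ with $j<\ell$ lie in $H_{\ell-1}\subset G_{\ell-1}$. However, the definition only says that $H'=H_{\ell-1}$ normalizes $Q_\ell$, and $G_{\ell-1}=H_{\ell-1}\ltimes V_{\ell-1}$ is larger than $H_{\ell-1}$. Since $P_{\ell-1}\subset H_{\ell-1}$, because every $Q_j$ with $j\le\ell-1$ lies in $H_{\ell-1}$, this is fine. The paper asserted these semidirect decompositions just before the statement, so we may take them as given.

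With these in place, the argument above gives $G_\ell=P_\ell\bowtie N_\ell$.
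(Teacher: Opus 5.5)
Your proof is correct and is essentially the paper's argument. The paper unrolls the matched pairs $H_j=Q_j\bowtie G_{j-1}$ and the splittings $G_j=H_j\ltimes V_j$ into $g_\ell=v_\ell q_\ell\cdots v_1q_1$ and then moves the $q$'s to the left, whereas your ordering $g_\ell=q_\ell g_{\ell-1}v_\ell$ together with induction makes that reordering unnecessary. You also supply two things the paper leaves implicit: the measure-class transfer via \cite{VV}, and the trivial-intersection argument, which the paper simply calls obvious. One minor point: the inclusion $Q_\ell P_{\ell-1}N_{\ell-1}V_\ell\subset P_\ell N_\ell$ needs no normality, since all four sets sit inside the subgroups $P_\ell$ and $N_\ell$; the identity $P_\ell=Q_\ell P_{\ell-1}$ is only actually used in your intersection argument.
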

\begin{proof}
We have to show that  $(P_\ell,N_\ell)$ forms a matched pair for $G_\ell$.
That $P_\ell\cap N_\ell=\{e\}$ is obvious. Next, we see by \eqref{IDCP} that almost every  $g_\ell\in G_\ell$ can be written as a product
\begin{equation}
\label{P1}
g_\ell=v_\ell q_\ell v_{\ell-1} q_{\ell-1}\cdots v_1q_1,\quad \mbox{where}\quad q_j\in Q_j,\, v_j\in V_j.
\end{equation}
Note that $Q_j\subset H_i$ for $j\leq i$. Hence $Q_j$ normalizes $V_i$. Passing the $q$'s through the $v$'s on the left, we see
that  almost all $g_\ell\in G_\ell$ can be written in the form
$$
g_\ell=\tilde q_\ell \tilde q_{\ell-1}\cdots \tilde q_1 \tilde v_\ell \tilde v_{\ell-1}\cdots \tilde  v_1 ,\quad \mbox{where}\quad \tilde  q_j\in Q_j,\, \tilde  v_j\in V_j .
$$
Therefore $P_\ell N_\ell$ has full measure in $G_\ell$.
\end{proof}

Consider the associated  measurable actions $\alpha \colon P_\ell\times N_\ell\to N_\ell$ and
$\beta:N_\ell\times P_\ell\to P_\ell$, such that
\begin{align}
\label{MA}
p_\ell n_\ell^{-1}=\alpha _{p_\ell}(n_\ell)^{-1}\,\beta_{n_\ell}(p_\ell).
\end{align}
In principle we should have used more precise notation. Namely, we should have written
$\alpha_j \colon P_j\times N_j\to N_j$ and
$\beta_j\colon  N_j\times P_j\to P_j$, $j=\ell,\cdots,1$  to distinguish these actions  at different depths.
But this is unnecessary  here,  since $\alpha_\ell|_{P_j\times N_j}=\alpha_j$ and $\beta_\ell|_{N_j\times P_j}=\beta_j$.

\begin{lemma}
\label{CaseS}
The restriction $\beta\big|_{V_\ell} $ is trivial,
  $\alpha\big|_{P_\ell}$ preserves $N_{\ell-1}$ and $\beta\big|_{N_{\ell-1}}$ preserves $Q_\ell$.\\
 Moreover, for $q_i\in Q_i$ and $v_j\in V_j$ with $i,j=\ell,\cdots,1$, we have:
$$
\alpha_{q_i}(v_j)=\begin{cases}
{\bf C}_{q_i}(v_j),\quad i\leq j\\
v_j,\quad i\geq j+2
\end{cases},\qquad
\beta_{v_j}(q_i)=\begin{cases}
q_i,\quad i\leq j\\
{\bf C}_{v_j}(q_i),\quad i\geq j+2
\end{cases}.
$$
\end{lemma}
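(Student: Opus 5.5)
The plan is to compute $\alpha$ and $\beta$ directly from the defining relation \eqref{MA}. By uniqueness of the factorization $G_\ell=P_\ell\bowtie N_\ell$ (valid almost everywhere), it suffices, given $p\in P_\ell$ and $n\in N_\ell$, to exhibit \emph{some} way of writing $pn^{-1}=n'^{-1}p'$ with $n'\in N_\ell$ and $p'\in P_\ell$; then $\alpha_p(n)=n'$ and $\beta_n(p)=p'$. All identities below are meant almost everywhere.

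\emph{The explicit formulas.} I would do the two cases $i\le j$ and $i\ge j+2$ first, since they reduce to normalization statements already used in the proof of the previous proposition.
\begin{itemize}
\item If $i\le j$, then $q_i\in Q_i\subset H_i\subset H_j$, and $H_j$ normalizes $V_j$. Hence
$$q_iv_j^{-1}=\big({\bf C}_{q_i}(v_j)\big)^{-1}q_i,$$
with ${\bf C}_{q_i}(v_j)\in V_j\subset N_\ell$. This gives $\alpha_{q_i}(v_j)={\bf C}_{q_i}(v_j)$ and $\beta_{v_j}(q_i)=q_i$.
\item The case $j=\ell$ works for an arbitrary $p\in P_\ell\subset H_\ell$, because $V_\ell$ is normal in $G_\ell$. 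Thus $pv^{-1}=(pvp^{-1})^{-1}p$ for $v\in V_\ell$, which proves that $\beta|_{V_\ell}$ is trivial.
\item If $i\ge j+2$, then $v_j\in V_j\subset G_j\subset H_{j+1}\subset\cdots\subset H_{i-1}$. By condition (2) of Definition~\ref{DOCn}, $H_{i-1}$ normalizes $Q_i$. Hence
$$q_iv_j^{-1}=v_j^{-1}\,{\bf C}_{v_j}(q_i),$$
with ${\bf C}_{v_j}(q_i)\in Q_i$. This gives $\alpha_{q_i}(v_j)=v_j$ and $\beta_{v_j}(q_i)={\bf C}_{v_j}(q_i)$.
\end{itemize}

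\emph{$\alpha|_{P_\ell}$ preserves $N_{\ell-1}$.} I would first show that $H_\ell=P_\ell\bowtie N_{\ell-1}$. Indeed, $H_\ell=Q_\ell G_{\ell-1}$ almost everywhere, and by the previous proposition at depth $\ell-1$ we have $G_{\ell-1}=P_{\ell-1}N_{\ell-1}$ almost everywhere. Since $Q_\ell P_{\ell-1}=P_\ell$, this gives $H_\ell=P_\ell N_{\ell-1}$ almost everywhere. Now take $p\in P_\ell$ and $n\in N_{\ell-1}$. Then $pn^{-1}\in H_\ell$ factors almost everywhere as $n'^{-1}p'$ with $n'\in N_{\ell-1}$ and $p'\in P_\ell$. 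By uniqueness of the factorization in $G_\ell$, this is the factorization \eqref{MA}, so $\alpha_p(n)\in N_{\ell-1}$. (The measure-theoretic "almost everywhere" bookkeeping needs some care but is routine.)

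\emph{$\beta|_{N_{\ell-1}}$ preserves $Q_\ell$.} Since $\beta$ is an action of $N_{\ell-1}$ and $N_{\ell-1}$ is generated by $V_1,\dots,V_{\ell-1}$, it suffices to treat $\beta_{v_j}(q_\ell)$ for $j\le \ell-1$. For $j\le\ell-2$ the formula above already gives ${\bf C}_{v_j}(q_\ell)\in Q_\ell$. The remaining case $j=\ell-1$, that is $i=j+1$, is the one excluded from the explicit formulas, and I expect it to be the main obstacle.

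For that case I would argue as follows. Use the matched pair $H_\ell=Q_\ell\bowtie G_{\ell-1}$ to write $q_\ell v^{-1}=g^{-1}q'$ with $g\in G_{\ell-1}$ and $q'\in Q_\ell$. Then factor $g\in G_{\ell-1}=N_{\ell-1}P_{\ell-1}$ as $g=mp''$ with $m\in N_{\ell-1}$ and $p''\in P_{\ell-1}$. Since $P_{\ell-1}\subset H_{\ell-1}$ normalizes $Q_\ell$, we get
$$q_\ell v^{-1}=p''^{-1}m^{-1}q'.$$
The aim is to show that $p''$ is trivial, using that $V_{\ell-1}$ stabilizes $\xi_{0,\ell}$ and that $(Q_\ell,V_\ell)$ satisfies ${\rm DOC}_1$. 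To this end, I would compare the dual actions on $\xi_{0,\ell}$ of both sides of the factorization of $q_\ell v^{-1}$. If this comparison does not force $p''=e$, I would fall back on passing to the subgroup $Q_\ell H_{\ell-1}$, which is a group because $H_{\ell-1}$ normalizes $Q_\ell$, and on using the decomposition $H_\ell=(Q_\ell H_{\ell-1})V_{\ell-1}$ to isolate the $Q_\ell$-component.
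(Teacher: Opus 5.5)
Your arguments for the explicit formulas (cases $i\le j$ and $i\ge j+2$) and for the triviality of $\beta|_{V_\ell}$ are the paper's. For the $\alpha$-invariance of $N_{\ell-1}$ you go through $H_\ell=P_\ell N_{\ell-1}$. The paper instead just observes that $\alpha_{p_\ell}(n_{\ell-1})=\beta_{n_{\ell-1}}(p_\ell)\,n_{\ell-1}\,p_\ell^{-1}$ lies in $N_\ell\cap H_\ell=N_{\ell-1}$. Your route is longer, but it also shows that $(P_\ell,N_{\ell-1})$ is a matched pair for $H_\ell$, so $\alpha$ and $\beta$ are defined almost everywhere on $P_\ell\times N_{\ell-1}$.

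The genuine gap is the case $j=\ell-1$ of ``$\beta|_{N_{\ell-1}}$ preserves $Q_\ell$'', which you leave open. It cannot be closed, because this part of the statement is false. Work in $\GL_2(\R)\ltimes\R^2$ with the subgroups of the paper's last section. Take $q_2=\left(\begin{smallmatrix}a&b\\0&1\end{smallmatrix}\right)\in Q_2$ and $v_1=\left(\begin{smallmatrix}1&0\\c&1\end{smallmatrix}\right)\in V_1=N_1$. Then
\[
q_2v_1^{-1}=\begin{pmatrix}a-bc&b\\-c&1\end{pmatrix}
=\begin{pmatrix}1&0\\-\frac{c}{a-bc}&1\end{pmatrix}\begin{pmatrix}a-bc&b\\0&\frac{a}{a-bc}\end{pmatrix}.
\]
So $\beta_{v_1}(q_2)$ has lower right entry $a/(a-bc)\neq1$ whenever $bc\neq0$. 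Hence $\beta_{v_1}(q_2)\notin Q_2$ on a set of full measure; this is just the paper's own formula for $\beta_n(p)$ at $z=1$.

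In your notation, first fix the order to $g=p''m$, so that $q_\ell v^{-1}=m^{-1}(p''^{-1}q')$ has the form \eqref{MA}. You then get $\beta_v(q_\ell)=p''^{-1}q'$, and in the example $p''^{-1}=\operatorname{diag}(1,a/(a-bc))\neq e$. No comparison of dual actions on $\xi_{0,\ell}$ can force $p''=e$, because $P_{\ell-1}\subset G_{\ell-1}$ fixes $\xi_{0,\ell}$.

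The paper's one-line justification has the same defect. The matched pair $(Q_\ell,G_{\ell-1})$ yields its own dressing action $\beta'$ of $G_{\ell-1}$ on $Q_\ell$ (your $q'=\beta'_v(q_\ell)$), and this is not the restriction of $\beta$.

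What your factorization does prove is the corrected statement $\beta_{n_{\ell-1}}(q_\ell)\in P_{\ell-1}\,\beta'_{n_{\ell-1}}(q_\ell)$. In other words, $\beta|_{N_{\ell-1}}$ preserves $Q_\ell$ only modulo $P_{\ell-1}$. Correspondingly, $\tilde\beta_{n_{\ell-1}}(q_\ell)$ lies in $Q_\ell$ only up to a right factor in $P_{\ell-1}$. This is the form that can be used downstream, e.g.\ in Lemma~\ref{Lphi}, where such a right factor is invisible to $p\mapsto p^\flat\xi_{0,\ell}$.
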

\begin{proof}
The first statement follows from the fact that $P_\ell$ acts on $V_\ell$ by conjugation.
The second statement follows from $H_\ell\cap N_\ell=N_{\ell-1}$ together with the equality
$$
N_\ell\ni \alpha_{p_\ell}(n_{\ell-1})=\beta_{n_{\ell-1}}(p_\ell)n_{\ell-1}p_\ell^{-1}\in H_\ell.
$$
The third statement follows from the fact that $(Q_\ell,G_{\ell-1})$ forms a matched pair for $H_\ell$ and that $N_{\ell-1}\subset G_{\ell-1}$.
The final statement follows from the fact that $Q_i\subset H_i$ normalizes $V_j$ for $i<j+1$ and $V_j\subset H_{i-1}$ normalizes $Q_j$
for $i>j+1$.
\end{proof}

\subsection{Haar measures and modular functions}
We will use the following notation. Given an element $n_\ell=v_\ell\cdots v_1\in N_\ell$, with $v_i\in V_i$, we denote by $n_{\ell-1}$ the element $v_{\ell-1}\cdots v_1$. Similarly, given an element $p_\ell=q_\ell\cdots q_1\in P_\ell$, we let $p_{\ell-1}:=q_{\ell-1}\cdots q_1$. Therefore
\begin{equation}
\label{NormPN}
n_\ell=v_\ell n_{\ell-1}=v_\ell\cdots v_1\in N_\ell \qquad\mbox{and}\qquad p_\ell=q_\ell p_{\ell-1}=q_\ell\cdots q_1\in P_\ell.
\end{equation}

In this notation, the  left-invariant  Haar measures and the modular functions
are inductively given by:
\begin{align}
\label{HMPN}
 dn_\ell=\frac{dv_\ell dn_{\ell-1}}{|n_{\ell-1}|_{V_\ell}},
\quad
dp_\ell=\frac{dq_\ell dp_{\ell-1}}{|p_{\ell-1}|_{Q_\ell}},
 \end{align}
\begin{align}
\label{MFPN}
\Delta_{N_\ell}(n_\ell)=\frac{\Delta_{N_{\ell-1}}(n_{\ell-1})}{|n_{\ell-1}|_{V_\ell}}, \quad
\Delta_{P_\ell}(p_\ell)=\frac{\Delta_{Q_\ell}(q_\ell)\Delta_{P_{\ell-1}}(p_{\ell-1}) }{ |p_{\ell-1}|_{Q_\ell}}.
\end{align}

It will also be convenient to consider the measurable actions $\alpha,\beta$ conjugated by the group inversion, that is, the actions $\tilde\alpha :P_\ell\times N_\ell
\to N_\ell$ and $\tilde\beta: N_\ell\times P_\ell\to P_\ell$ given by
$$
\tilde \alpha _{p_\ell}(n_\ell):=\big(\alpha _{p_\ell}(n_\ell^{-1})\big)^{-1},\quad
\tilde\beta_{n_\ell }(p_\ell ):=\big(\beta_{n_\ell }(p_\ell ^{-1})\big)^{-1}.
$$

 We start with a series of results, all based on measure-theoretical considerations, leading to important simplifications
of the formulas for the modular and modulus functions and the Haar measures.
\begin{lemma}
\label{Lphi}
The  measure class  isomorphism
 \begin{equation}
 \label{phi}
\phi_\ell:Q_\ell\to \hat V_\ell, \quad q_\ell\mapsto q_\ell^\flat\xi_{0,\ell},
\end{equation}
intertwines the dual action of $Q_\ell$  with the left action of $Q_\ell$,
it intertwines the dual action of $V_{\ell-1}$ with the action $\tilde\beta$
and it intertwines the dual action of $H_{\ell-1}$ with the conjugation action.
\end{lemma}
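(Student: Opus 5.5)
The plan is to check each of the three intertwining properties directly from the definition $\phi_\ell(q_\ell)=q_\ell^\flat\xi_{0,\ell}$. Two facts do all the work. First, $g\mapsto g^\flat$ is an action of $H_\ell$ on $\hat V_\ell$, so $(gh)^\flat=g^\flat h^\flat$. Second, $G_{\ell-1}={\rm Stab}_{H_\ell}(\xi_{0,\ell})$. That $\phi_\ell$ is a measure class isomorphism is not part of what must be proved here: it is the statement that $(Q_\ell,V_\ell)$ satisfies ${\rm DOC}_1$, noted in the Remark after Definition~\ref{DOCn}.

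\emph{Left action of $Q_\ell$.} For $q,q'\in Q_\ell$ we have
$$
\phi_\ell(q'q)=(q'q)^\flat\xi_{0,\ell}=q'^\flat\big(q^\flat\xi_{0,\ell}\big)=q'^\flat\phi_\ell(q).
$$

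\emph{Conjugation action of $H_{\ell-1}$.} By Definition~\ref{DOCn}(2), $H_{\ell-1}$ normalizes $Q_\ell$, so ${\bf C}_h(q)\in Q_\ell$ for $h\in H_{\ell-1}$ and $q\in Q_\ell$. Since $H_{\ell-1}\subset G_{\ell-1}$, the element $h^{-1}$ fixes $\xi_{0,\ell}$. Therefore
$$
\phi_\ell(hqh^{-1})=h^\flat q^\flat (h^{-1})^\flat\xi_{0,\ell}=h^\flat\phi_\ell(q).
$$

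\emph{Action $\tilde\beta$ of $V_{\ell-1}$.} This is the only step that needs more than the definitions. Fix $v\in V_{\ell-1}$ and $q\in Q_\ell$, and apply \eqref{MA} to the pair $(q^{-1},v)$:
$$
q^{-1}v^{-1}=\alpha_{q^{-1}}(v)^{-1}\beta_v(q^{-1}).
$$
Solving for $\beta_v(q^{-1})$ and inverting gives
$$
\tilde\beta_v(q)=\beta_v(q^{-1})^{-1}=vq\,\alpha_{q^{-1}}(v)^{-1}.
$$
By Lemma~\ref{CaseS}, $\beta|_{N_{\ell-1}}$ preserves $Q_\ell$, so $\tilde\beta_v(q)\in Q_\ell$. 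Also by Lemma~\ref{CaseS}, $\alpha|_{P_\ell}$ preserves $N_{\ell-1}$, so $g':=\alpha_{q^{-1}}(v)\in N_{\ell-1}\subset G_{\ell-1}$. The same conclusion also follows from uniqueness of the $Q_\ell G_{\ell-1}$-factorization of $vq$, since $(Q_\ell,G_{\ell-1})$ is a matched pair for $H_\ell$. Because $g'^{-1}$ fixes $\xi_{0,\ell}$,
$$
\phi_\ell(\tilde\beta_v(q))=v^\flat q^\flat (g'^{-1})^\flat\xi_{0,\ell}=v^\flat\phi_\ell(q).
$$

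The main obstacle is bookkeeping rather than substance. One has to get the inversions in the definition of $\tilde\beta$ right, so that exactly a $G_{\ell-1}$-factor appears on the right and is absorbed by the stabilizer. One also has to note that the matched-pair actions are defined only almost everywhere, so the second identity holds for almost all $(v,q)$, which suffices for measure-class statements.
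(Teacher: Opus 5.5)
Your first and third steps are fine, and your overall strategy matches the paper's. The paper treats all three claims at once: it writes $\tilde h_\ell q_\ell=\tilde q_\ell\,\tilde\beta_{\tilde v_{\ell-1}}({\bf C}_{\tilde h_{\ell-1}}(q_\ell))\,\alpha_{{\bf C}_{\tilde h_{\ell-1}}(q_\ell^{-1})}(\tilde v_{\ell-1})\,\tilde h_{\ell-1}$ and absorbs the last two factors into ${\rm Stab}_{H_\ell}(\xi_{0,\ell})=G_{\ell-1}$, which is what you do piece by piece.

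The problem is the step $\tilde\beta_v(q)\in Q_\ell$. The third assertion of Lemma~\ref{CaseS} that you rely on is false. In $\GL_2(\R)\ltimes\R^2$, the $P_2N_2$-factorization of $vq$ with $q\in Q_2$, $v\in V_1$ is
\[
\begin{pmatrix}1&0\\c&1\end{pmatrix}\begin{pmatrix}a&b\\0&1\end{pmatrix}
=\begin{pmatrix}\frac{a}{1+bc}&b\\0&1+bc\end{pmatrix}\begin{pmatrix}1&0\\\frac{ac}{1+bc}&1\end{pmatrix}.
\]
So $\tilde\beta_v(q)=vq\,\alpha_{q^{-1}}(v)^{-1}$ has $(2,2)$-entry $1+bc\neq1$ and does not lie in $Q_2$. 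The paper's own formula for $\beta_n(p)$ in the last section, whose $(2,2)$-entry for $p\in Q_2$ is $a/(a-bc)$, shows the same thing.

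Your fallback via uniqueness of the $Q_\ell G_{\ell-1}$-factorization does not rescue this. Write $\tilde\beta_v(q)=q'p''$ with $q'\in Q_\ell$, $p''\in P_{\ell-1}$. Then the $Q_\ell G_{\ell-1}$-factorization of $vq$ is $q'\cdot\big(p''\alpha_{q^{-1}}(v)\big)$, so uniqueness identifies $q'$, not $\tilde\beta_v(q)$, as the $Q_\ell$-factor. Hence $\phi_\ell(\tilde\beta_v(q))$ is undefined as written.

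The repair is small. What your computation actually shows is $\tilde\beta_v(q)^\flat\xi_{0,\ell}=v^\flat\phi_\ell(q)$. For this you only need $\alpha_{q^{-1}}(v)\in N_{\ell-1}\subset G_{\ell-1}$, which is the second assertion of Lemma~\ref{CaseS}, and its proof is fine.

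The action of $V_{\ell-1}$ on $Q_\ell$ that makes the lemma meaningful is $q\mapsto q'$, the $Q_\ell$-component of $\tilde\beta_v(q)$ in $P_\ell=Q_\ell P_{\ell-1}$. Equivalently, $q'$ is the $Q_\ell$-factor of $vq$ in $H_\ell=Q_\ell\bowtie G_{\ell-1}$, i.e.\ the dressing action of the matched pair $(Q_\ell,G_{\ell-1})$. This is also the action that Corollary~\ref{C2L1} actually works with.

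With $g:=p''\alpha_{q^{-1}}(v)\in G_{\ell-1}$ you get $vq=q'g$. Since $g$ fixes $\xi_{0,\ell}$, it follows that $\phi_\ell(q')=(vq\,g^{-1})^\flat\xi_{0,\ell}=v^\flat\phi_\ell(q)$, which is the correct form of the second claim.

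The paper's proof makes the same tacit identification when it writes $\phi_\ell(\tilde q_\ell\,\tilde\beta_{v_{\ell-1}}(\cdots))$. You should therefore state the second claim in this corrected form instead of asserting $\tilde\beta_v(q)\in Q_\ell$.
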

\bp
For $\tilde h_\ell=\tilde q_\ell\tilde g_{\ell-1}=\tilde q_\ell\tilde v_{\ell-1}\tilde h_{\ell-1}\in H_\ell$ and $q_\ell\in Q_\ell$, we have,
since $H_{\ell-1}$ normalizes $Q_\ell$, that
$$
\tilde h_\ell \,q_\ell=\tilde q_\ell \,\tilde\beta_{v_{\ell-1}}({\bf C}_{\tilde h_{\ell-1}}(q_\ell))\, \alpha_{{\bf C}_{\tilde h_{\ell-1}}(q_\ell^{-1})}(v_{\ell-1})\,
 \tilde h_{\ell-1}.
$$
Since $\xi_{0,\ell}$ is invariant under the dual action of $G_{\ell-1}$, we get
$$
{\tilde h_\ell} ^\flat\phi_\ell(q_\ell)=\phi_\ell\big(\tilde q_\ell \,\tilde\beta_{v_{\ell-1}}({\bf C}_{\tilde h_{\ell-1}}(q_\ell))\big),
$$
which is all we need.
\ep

\begin{lemma}
\label{IDMF}
We have $|\cdot|_{V_\ell}=|\cdot|_{Q_\ell}^{-1}$ on $P_{\ell-1}$.
\end{lemma}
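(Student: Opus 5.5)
Transport the modulus through the equivariant measure class isomorphism $\phi_\ell$ of Lemma \ref{Lphi}.

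By Lemma \ref{Lphi}, $\phi_\ell\colon Q_\ell\to\hat V_\ell$, $q_\ell\mapsto q_\ell^\flat\xi_{0,\ell}$, intertwines conjugation by $H_{\ell-1}$ on $Q_\ell$ with the dual action of $H_{\ell-1}$ on $\hat V_\ell$. Since $P_{\ell-1}\subset H_{\ell-1}$, this holds in particular for every $p\in P_{\ell-1}$:
$$
\phi_\ell\circ{\bf C}_p=p^\flat\circ\phi_\ell .
$$
Since $\phi_\ell$ is a measure class isomorphism, the push-forward $\nu:=(\phi_\ell)_*(dq_\ell)$ is equivalent to the Haar measure $d\xi$ on $\hat V_\ell$. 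Write $d\nu=F\,d\xi$ with $F>0$ a.e.

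\textbf{Computing the modulus on both sides.} By definition of $|p|_{Q_\ell}$, the measure $dq_\ell$ is transformed by ${\bf C}_p$ into $|p|_{Q_\ell}\,dq_\ell$ (up to the convention on sides). Transporting this by $\phi_\ell$, we get $(p^\flat)_*\nu=|p|_{Q_\ell}^{\pm1}\nu$. On the other hand, $(p^\flat)_*d\xi=|p^\flat|_{\hat V_\ell}^{\pm1}d\xi$ with the same sign convention. Hence
$$
F\circ(p^\flat)^{-1}=\frac{|p|_{Q_\ell}}{|p^\flat|_{\hat V_\ell}}\,F\quad\text{a.e.}
$$
(with consistent exponents).

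\textbf{Key step: the density $F$ is essentially constant.} This is the main point to verify. By Lemma \ref{Lphi}, $\phi_\ell$ also intertwines left translation on $Q_\ell$ with the dual action of $Q_\ell$. So
$$
(\tilde q^\flat)_*\nu=(\phi_\ell)_*(\lambda_{\tilde q})_*dq_\ell=\nu ,
$$
because $dq_\ell$ is left invariant. Also $(\tilde q^\flat)_*d\xi=c(\tilde q)\,d\xi$ for a constant $c(\tilde q)$. Therefore $F(\tilde q^\flat\xi)$ equals a constant multiple of $F(\xi)$, depending only on $\tilde q$. The dual action of $Q_\ell$ on $\hat V_\ell$ is essentially transitive, since its orbit of $\xi_{0,\ell}$ is the image of $\phi_\ell$, which is conull. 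A positive measurable function that is relatively invariant under an essentially transitive action of this kind has the form $F(\phi_\ell(q))=\kappa\,\chi(q)$ for a continuous character $\chi$ of $Q_\ell$. I will argue this by a Fubini-type argument on $Q_\ell\times Q_\ell$, taking care of the null sets.

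\textbf{Conclusion.} Evaluating the displayed relation at $\xi_{0,\ell}$ would suffice if $F$ were continuous, but $F$ is only measurable. Instead I plan to test against functions and use uniqueness of Haar measure up to scalar. Using the character form of $F$ on $Q_\ell$ and the relation between ${\bf C}_p$ and the left action, the factor $\chi(pqp^{-1})/\chi(q)$ equals $1$, since $\chi$ is a homomorphism into an abelian group, so $\chi\circ{\bf C}_p=\chi$. Hence the displayed relation forces $|p|_{Q_\ell}=|p^\flat|_{\hat V_\ell}$. Combined with $|p|_{V_\ell}=|p^\flat|_{\hat V_\ell}^{-1}$ from the notation section, this gives $|p|_{V_\ell}=|p|_{Q_\ell}^{-1}$ on $P_{\ell-1}$.

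The hard part is showing that $F$ transported to $Q_\ell$ is a character, i.e.\ the relative-invariance and null-set argument. The rest is bookkeeping of exponent conventions, which I will check against the definition $\int f(\tau g)\,dg=|\tau|^{-1}\int f$.
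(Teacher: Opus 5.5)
Your strategy can be made to work. It compares the modulus of ${\bf C}_p$ on $Q_\ell$ with that of $p^\flat$ on $\hat V_\ell$ through the Radon--Nikodym density $F$ of $(\phi_\ell)_*dq_\ell$ against $d\xi_\ell$. But the step on which everything hinges is justified incorrectly. You claim $\chi\circ{\bf C}_p=\chi$ ``since $\chi$ is a homomorphism into an abelian group''. That only gives invariance under \emph{inner} automorphisms of $Q_\ell$. For $p\in P_{\ell-1}$, the automorphism ${\bf C}_p$ of $Q_\ell$ comes from outside $Q_\ell$, and a character need not be invariant under such automorphisms. For example, the character $x\mapsto e^x$ of $\R$ is not invariant under $x\mapsto 2x$. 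Writing $\chi(pqp^{-1})=\chi(p)\chi(q)\chi(p)^{-1}$ presupposes that $\chi$ is defined at $p$.

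Relatedly, your heading ``$F$ is essentially constant'' is false. Already for $\Aff(\R)$, where $dq=da/|a|$ and $\phi_1(a)=a^{-1}\xi_{0,1}$, one finds $F(\xi)\propto|\xi|^{-1}$. Because $F\circ\phi_\ell$ is a non-constant character, the factor $\chi(pqp^{-1})/\chi(q)$ carries the whole content of the lemma.

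The missing idea is to identify $\chi$, and your own relative-invariance computation does this: $F(\phi_\ell(q))=\kappa\,|q^\flat|_{\hat V_\ell}^{-1}=\kappa\,|q|_{V_\ell}$. Equivalently, $\phi_\ell$ pulls $d\xi_\ell$ back to a multiple of $|q_\ell|_{V_\ell}^{-1}dq_\ell$, as is used later in the proof of Proposition~\ref{NUM}. This character is the restriction to $Q_\ell$ of the homomorphism $h\mapsto|h|_{V_\ell}$, which is defined on all of $H_\ell$ because moduli are multiplicative under composition. Since $p\in P_{\ell-1}\subset H_\ell$, you get $|pqp^{-1}|_{V_\ell}=|q|_{V_\ell}$. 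With that, your argument closes and yields $|p^\flat|_{\hat V_\ell}=|p|_{Q_\ell}$.

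The paper uses the same fact implicitly when it asserts that $|\phi_\ell^{-1}(\xi_\ell)|_{\hat V_\ell}^{-1}|p_{\ell-1}|_{\hat V_\ell}^{-1}d\xi_\ell\,dp_{\ell-1}$ is invariant. It packages the argument differently: it transports left Haar measure of $P_\ell$ through $\Psi_\ell\colon q_\ell p_{\ell-1}\mapsto(\phi_\ell(q_\ell),p_{\ell-1})$. It then compares the result with $dp_\ell=|p_{\ell-1}|_{Q_\ell}^{-1}dq_\ell\,dp_{\ell-1}$ via uniqueness of Haar measure. This avoids your a.e.\ density and Fubini bookkeeping. Your route stays on $Q_\ell$ and is more direct once the character is identified.
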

\begin{proof}
Consider the action of $P_\ell$ on $\hat V_\ell\times P_{\ell-1}$ given  by
$\tilde p_\ell.(\xi_\ell, p_{\ell-1}):=(\tilde p_\ell^\flat \xi_\ell, \tilde p_{\ell-1} p_{\ell-1})$, and observe that the measure
$|\phi_\ell^{-1}(\xi_\ell)|_{\hat V_\ell}^{-1}|p_{\ell-1}|_{\hat V_\ell}^{-1} d\xi_\ell dp_{\ell-1}$ is invariant under this action.
Now,  since $P_{\ell-1}\subset H_{\ell-1}$, we deduce from Lemma \ref{Lphi} that the  measure class  isomorphism
$$
\Psi_\ell:P_\ell\to \hat V_\ell\times P_{\ell-1},\quad p_\ell=q_\ell p_{\ell-1}\mapsto (\phi_\ell(q_\ell),p_{\ell-1}),
$$
intertwines the left action of $P_\ell$ on itself with the action described above. Hence the pullback of the invariant measure on $\hat V_\ell\times P_{\ell-1}$   by
$\Psi_\ell$
is a multiple of the left-invariant Haar measure of $P_\ell$, which is $|p_{\ell-1}|_{Q_\ell}^{-1} dq_\ell dp_{\ell-1}$.
However, a direct computation shows  that this pullback  is  $|p_{\ell-1}|_{\hat V_\ell}^{-1} dq_\ell dp_{\ell-1}$. Therefore, there exists $c>0$ such that
for all $p_{\ell-1}\in P_{\ell-1}$, we have
$|p_{\ell-1}|_{\hat V_\ell}=c|p_{\ell-1}|_{Q_\ell}$. Evaluating this relation at the neutral element gives $c=1$.
We arrive at our conclusion  using the relation $|\cdot|_{\hat V_\ell}=|\cdot|_{V_\ell}^{-1}$.
\end{proof}

\begin{lemma}
\label{L1}
We have $\Delta_{G_\ell}\big|_{G_{\ell-1}}=\Delta_{G_{\ell-1}}$.
\end{lemma}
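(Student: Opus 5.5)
The plan is to reduce the statement to a computation with relatively invariant measures on the homogeneous space $H_\ell/G_{\ell-1}$, which is identified with $\hat V_\ell$ by condition (1) of Definition~\ref{DOCn}.

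\textbf{Step 1: semidirect product formula.} Since $G_\ell=H_\ell\ltimes V_\ell$ and $G_{\ell-1}\subset H_\ell$, the formula for the modular function of a semidirect product gives, for $g\in G_{\ell-1}$,
$$
\Delta_{G_\ell}(g)=\frac{\Delta_{H_\ell}(g)}{|g|_{V_\ell}}.
$$
So the lemma is equivalent to the identity
$$
|g|_{V_\ell}=\frac{\Delta_{H_\ell}(g)}{\Delta_{G_{\ell-1}}(g)}\qquad\text{for all } g\in G_{\ell-1}. \tag{$\ast$}
$$

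\textbf{Step 2: a relatively invariant measure on $H_\ell/G_{\ell-1}$.} Recall that $G_{\ell-1}$ is the stabilizer of $\xi_{0,\ell}$ and that the $H_\ell$-orbit of $\xi_{0,\ell}$ has full measure. Hence the orbit map $hG_{\ell-1}\mapsto h^\flat\xi_{0,\ell}$ is an $H_\ell$-equivariant measure class isomorphism $H_\ell/G_{\ell-1}\cong\hat V_\ell$. The point I must check carefully is that it is not just a bijection onto a conull set but a measure class isomorphism; this is what the full-measure orbit assumption (and, alternatively, Lemma~\ref{Lphi} via $Q_\ell\cong H_\ell/G_{\ell-1}$) provides.

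Transport the Haar measure $d\xi$ of $\hat V_\ell$ to a measure $\mu$ on $H_\ell/G_{\ell-1}$. Since $h^\flat$ is an automorphism of $\hat V_\ell$ with modulus $|h^\flat|_{\hat V_\ell}=|h|_{V_\ell}^{-1}$, the measure $\mu$ is relatively invariant under $H_\ell$ with the continuous character $\chi(h)=|h|_{V_\ell}^{\pm1}$; the sign is fixed by the definition of the modulus.

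\textbf{Step 3: the homogeneous space criterion.} I will invoke the standard criterion (Weil's formula, e.g.\ Bourbaki or Folland). A relatively invariant measure with character $\chi$ exists on $H/K$ if and only if $\chi|_K=\Delta_K/\Delta_H$, up to the convention-dependent inversion. Concretely, I would write $\int_{H}f(h)\,\chi(h)^{\mp1}dh=\int_{H/K}\int_K f(hk)\,dk\,d\mu(hK)$ and replace $f$ by a right translate $f(\cdot\, k_0)$, which yields the relation on $K$.

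Applying this with $H=H_\ell$, $K=G_{\ell-1}$ and $\chi=|\cdot|_{V_\ell}^{\pm1}$ gives $(\ast)$. Together with Step~1, this proves $\Delta_{G_\ell}|_{G_{\ell-1}}=\Delta_{G_{\ell-1}}$.

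\textbf{Main obstacle: orientation conventions.} The only delicate part is getting the exponents right. Three conventions interact: whether the modulus of an automorphism appears as $|\tau|$ or $|\tau|^{-1}$, the duality $|g|_V=|g^\flat|_{\hat V}^{-1}$, and the form of the Weil formula for relatively invariant measures.

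I would fix these by direct verification rather than quoting. Unfold Weil's formula with the test function $f(\cdot\,k_0)$ and track each factor. As a sanity check, use the simplest case $\ell=1$, $G_0=\{e\}$, where $(\ast)$ is vacuous. A second check is the affine group $\GL_1(\K)\ltimes\K$ inside $\GL_2(\K)\ltimes\K^2$, where all modular functions can be computed by hand.

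One can also cross-check against Lemma~\ref{IDMF}, which gives the analogous relation $|\cdot|_{V_\ell}=|\cdot|_{Q_\ell}^{-1}$ on $P_{\ell-1}$. That lemma was proved by exactly this kind of pullback-of-invariant-measure argument, so its sign conventions can serve as a template.
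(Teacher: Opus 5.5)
Your argument is correct, and the exponent you left open comes out with no inversion. With the paper's conventions, $\int_{\hat V_\ell}f(h^\flat\xi)\,d\xi=|h^\flat|_{\hat V_\ell}^{-1}\int_{\hat V_\ell}f\,d\xi=|h|_{V_\ell}\int_{\hat V_\ell}f\,d\xi$. So the transported measure satisfies $\mu(hE)=\chi(h)\,\mu(E)$ with $\chi(h)=|h|_{V_\ell}^{-1}$.

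Now consider the functional $I(f)=\int_{H_\ell/G_{\ell-1}}\int_{G_{\ell-1}}f(hk)\,dk\,d\mu(hG_{\ell-1})$ on $C_c(H_\ell)$. It satisfies $I(\lambda_xf)=\chi(x)I(f)$, so $f\mapsto I(\chi^{-1}f)$ is left invariant. Hence $I(f)=c\int_{H_\ell}f(h)\chi(h)\,dh$ for some $c>0$. Replacing $f$ by $f(\cdot\,k_0)$ with $k_0\in G_{\ell-1}$ gives $\Delta_{G_{\ell-1}}(k_0)^{-1}=\chi(k_0)^{-1}\Delta_{H_\ell}(k_0)^{-1}$, that is, $|k_0|_{V_\ell}=\Delta_{H_\ell}(k_0)/\Delta_{G_{\ell-1}}(k_0)$. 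This is exactly $(\ast)$.

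The point you flagged in Step 2 is less delicate than you feared. The orbit map $hG_{\ell-1}\mapsto h^\flat\xi_{0,\ell}$ is a continuous injection of $H_\ell/G_{\ell-1}$ into $\hat V_\ell$, so pulling back $d\xi$ along it gives a Radon measure, since compact sets go to compact sets. This measure is nonzero because the orbit has positive measure. That is all the necessity direction of the criterion uses; the full measure class isomorphism is not needed.

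Compared with the paper, the key idea is the same: realize a homogeneous space inside $\hat V_\ell$ and use its Haar measure. The packaging differs.

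The paper works on the larger space $G_\ell/G_{\ell-1}\cong\hat V_\ell\times V_\ell$, via $vhG_{\ell-1}\mapsto(h^\flat\xi_{0,\ell},v)$, with $G_\ell$ acting affinely. There the modulus of $h^\flat$ on $\hat V_\ell$ and that of $\mathbf{C}_h$ on $V_\ell$ cancel, so $d\xi\,dv$ is genuinely invariant. Only the basic criterion is then needed ($G/K$ carries an invariant measure if and only if $\Delta_G|_K=\Delta_K$), with no semidirect-product formula and no exponent bookkeeping.

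You split the same cancellation into two steps. The factor $|g|_{V_\ell}$ enters through $\Delta_{G_\ell}=\Delta_{H_\ell}/|\cdot|_{V_\ell}$ on $H_\ell$. The factor from $\hat V_\ell$ enters as the multiplier of a relatively invariant measure on $H_\ell/G_{\ell-1}\cong\hat V_\ell$. This costs you the relatively invariant version of Weil's criterion and the sign tracking. In return it isolates the identity $(\ast)$, $|\cdot|_{V_\ell}=\Delta_{H_\ell}/\Delta_{G_{\ell-1}}$ on $G_{\ell-1}$. That identity involves only the action of $H_\ell$ on $V_\ell$ and is the natural analogue of Lemma~\ref{IDMF}.
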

\bp
It is a classical result in harmonic analysis that the relation we have to prove is equivalent to existence of a $G_\ell$-invariant Radon measure
on the homogeneous space $G_\ell/G_{\ell-1}$. Consider the   measure class  isomorphism:
\begin{equation*}
\Theta_\ell:G_\ell/G_{\ell-1}\to \hat V_\ell\times V_\ell,\quad (v_\ell q_\ell g_{\ell-1}) G_{\ell-1}\mapsto (\phi_\ell(q_\ell),v_\ell).
\end{equation*}

Now, consider  the affine  action of $G_\ell$  on $ \hat V_\ell\times V_\ell$
defined by $(\tilde  v_\ell \tilde h_\ell ).(\xi_\ell, v_\ell):=(\tilde  h_\ell ^\flat\xi_\ell,\tilde v_\ell{\bf C}_{\tilde h_\ell}  ( v_\ell)  )$.
From Lemma \ref{Lphi} we see that $\Theta_\ell$ intertwines this action with  the one on the homogeneous space $G_\ell/G_{\ell-1}$:
$$
\tilde g_\ell.\Theta_\ell(g_\ell G_{\ell-1})=
\Theta_\ell\big(\tilde v_\ell{\bf C}_{\tilde h_\ell}  ( v_\ell)\tilde q_\ell\tilde\beta_{\tilde v_{\ell-1}}{\bf C}_{\tilde h_{\ell-1}}(q_\ell)G_{\ell-1}\big)
=\Theta_\ell(\tilde g_\ell g_\ell G_{\ell-1}).
$$
Since $|h_\ell|_{V_\ell}=|h_\ell|_{\hat V_\ell}^{-1}$,  the Haar measure $d\xi_\ell dx_\ell$ is invariant under the affine  action
of $G_\ell$ on $ \hat V_\ell\times V_\ell$, and therefore the pullback of this measure under $\Theta_\ell$ is the desired
invariant measure on  $G_\ell/G_{\ell-1}$.
\ep

  \begin{corollary}
  \label{C1L1}
For $p_\ell=q_\ell p_{\ell-1}\in P_\ell$ and $n_\ell\in N_\ell$,   we have
  $  \Delta_{G_\ell}(p_\ell)= \Delta_{G_\ell}(q_\ell)\Delta_{G_{\ell-1}}(p_{\ell-1})$,
  $\Delta_{G_\ell}(n_\ell)=1$ and $ \Delta_{G_\ell}\big(\tilde\beta_{n_\ell}(p_\ell)\big)= \Delta_{G_\ell}(p_\ell)$.
   \end{corollary}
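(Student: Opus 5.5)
The plan is to treat the three identities separately, using Lemma \ref{L1} and the multiplicativity of $\Delta_{G_\ell}$.

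\emph{First identity.} Since $\Delta_{G_\ell}$ is a homomorphism, $\Delta_{G_\ell}(q_\ell p_{\ell-1})=\Delta_{G_\ell}(q_\ell)\Delta_{G_\ell}(p_{\ell-1})$. Because $p_{\ell-1}\in P_{\ell-1}\subset G_{\ell-1}$, Lemma \ref{L1} gives $\Delta_{G_\ell}(p_{\ell-1})=\Delta_{G_{\ell-1}}(p_{\ell-1})$.

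\emph{Second identity.} Write $n_\ell=v_\ell n_{\ell-1}$. Then $\Delta_{G_\ell}(n_\ell)=\Delta_{G_\ell}(v_\ell)\Delta_{G_{\ell-1}}(n_{\ell-1})$, again by Lemma \ref{L1}. For $v_\ell\in V_\ell$ we use the semidirect product formula $\Delta_{G_\ell}(h v)=\Delta_{H_\ell}(h)\Delta_{V_\ell}(v)/|h|_{V_\ell}$ with $h=e$; this gives $\Delta_{G_\ell}(v_\ell)=\Delta_{V_\ell}(v_\ell)=1$, since $V_\ell$ is abelian. Induction on $\ell$ then yields $\Delta_{G_\ell}(n_\ell)=1$. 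The base case is $\ell=1$, where $N_1=V_1$ and the same argument applies.

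\emph{Third identity.} This is the main step. From \eqref{MA}, replacing $p_\ell$ by $p_\ell^{-1}$ and inverting, we get
\[
n_\ell\,p_\ell=\tilde\beta_{n_\ell}(p_\ell)\,\tilde\alpha_{p_\ell}(n_\ell)^{\pm1}.
\]
The exact form of the $N_\ell$-factor does not matter; what matters is the relation $n_\ell p_\ell=\tilde\beta_{n_\ell}(p_\ell)\,n'$ with $n'\in N_\ell$. To check this, start from $p_\ell^{-1}n_\ell^{-1}=\alpha_{p_\ell^{-1}}(n_\ell)^{-1}\beta_{n_\ell}(p_\ell^{-1})$ and invert both sides:
\[
n_\ell p_\ell=\beta_{n_\ell}(p_\ell^{-1})^{-1}\,\alpha_{p_\ell^{-1}}(n_\ell)=\tilde\beta_{n_\ell}(p_\ell)\,\alpha_{p_\ell^{-1}}(n_\ell).
\]
Now apply the homomorphism $\Delta_{G_\ell}$ and use $\Delta_{G_\ell}=1$ on $N_\ell$ from the second identity:
\[
\Delta_{G_\ell}(p_\ell)=\Delta_{G_\ell}(n_\ell p_\ell)=\Delta_{G_\ell}\big(\tilde\beta_{n_\ell}(p_\ell)\big).
\]
The relation \eqref{MA} only holds almost everywhere, so this equality holds for almost all pairs $(n_\ell,p_\ell)$. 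That is the only subtlety, and it is harmless at the level of generality in which the paper uses these measurable actions.
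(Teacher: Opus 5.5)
Your proof is correct and follows the paper's argument. The first two identities come from multiplicativity of $\Delta_{G_\ell}$, Lemma~\ref{L1}, and $\Delta_{G_\ell}|_{V_\ell}=1$, with induction on $\ell$; the third comes from applying $\Delta_{G_\ell}$ to $n_\ell p_\ell=\tilde\beta_{n_\ell}(p_\ell)\,\alpha_{p_\ell^{-1}}(n_\ell)$, which is what the paper's terse ``follows from \eqref{MA}'' means. Your preliminary hedged display with $\tilde\alpha_{p_\ell}(n_\ell)^{\pm1}$ is inaccurate, since the $N_\ell$-factor is $\alpha_{p_\ell^{-1}}(n_\ell)=\tilde\alpha_{p_\ell^{-1}}(n_\ell^{-1})^{-1}$, but your explicit derivation right after it is correct and is all the argument uses.
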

  \begin{proof}
 Since $P_{\ell-1}\subset G_{\ell-1}$, we get
 $ \Delta_{G_\ell}(q_\ell p_{\ell-1})= \Delta_{G_\ell}(q_\ell)\Delta_{G_\ell}(p_{\ell-1})=\Delta_{G_\ell}(q_\ell)\Delta_{G_{\ell-1}}(p_{\ell-1})$,
 which gives the first relation.
Since $G_\ell=H_\ell\ltimes V_\ell$   we have $\Delta_{G_\ell}(v_\ell)=1$ and since $N_{\ell-1}\subset G_{\ell-1}$, we get
$ \Delta_{G_\ell}(v_\ell n_{\ell-1})
 =\Delta_{G_\ell}(n_{\ell-1})= \Delta_{G_{\ell-1}}(n_{\ell-1})$, from which the second relation follows.
 The last relation follows then from  \eqref{MA}.
 \end{proof}

  \begin{corollary}
  \label{C2L1}
 The left-invariant Haar measure on  $Q_\ell$ is invariant under the action $\tilde\beta$ of $N_{\ell-1}$.
 \end{corollary}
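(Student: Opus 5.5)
The plan is to transport the problem to $\hat V_\ell$ through the measure class isomorphism $\phi_\ell$ of Lemma~\ref{Lphi}. There the question becomes whether a certain measure is invariant under the dual action of $N_{\ell-1}$, a group that, by Corollary~\ref{C1L1}, has trivial modular function inside $G_\ell$.

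First I identify the pushforward of the Haar measure $dq_\ell$ under $\phi_\ell$. By Lemma~\ref{Lphi}, $\phi_\ell$ intertwines left translation on $Q_\ell$ with the dual action of $Q_\ell$ on $\hat V_\ell$. The measure $|\phi_\ell^{-1}(\xi)|_{\hat V_\ell}^{-1}\,d\xi$ is invariant under this dual action, since $d\xi$ transforms by the modulus and $|\cdot|_{\hat V_\ell}$ is multiplicative. Uniqueness of Haar measure on $Q_\ell$ then gives
$$
(\phi_\ell)_*(dq_\ell)=c\,|\phi_\ell^{-1}(\xi)|_{\hat V_\ell}^{-1}\,d\xi
$$
for some $c>0$. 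This is the same computation used in the proof of Lemma~\ref{IDMF}.

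Next I take $n_{\ell-1}\in N_{\ell-1}$. By Lemma~\ref{Lphi} (applied to the $V_j$, $j<\ell$, inside $H_{\ell-1}$ and $V_{\ell-1}$, and using that $\tilde\beta$ is an action), $\phi_\ell$ intertwines $\tilde\beta_{n_{\ell-1}}$ with the dual action $n_{\ell-1}^\flat$. So it suffices to show that $|\phi_\ell^{-1}(\xi)|_{\hat V_\ell}^{-1}d\xi$ is invariant under $n_{\ell-1}^\flat$. Under this map $d\xi$ is multiplied by $|n_{\ell-1}|_{\hat V_\ell}^{\pm1}$. The density changes because
$$
\phi_\ell^{-1}(n_{\ell-1}^\flat\xi)=\tilde\beta_{n_{\ell-1}}(q_\ell), \qquad q_\ell=\phi_\ell^{-1}(\xi).
$$

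The key step, and the main obstacle, is to relate $|\tilde\beta_{n_{\ell-1}}(q_\ell)|_{\hat V_\ell}$ to $|q_\ell|_{\hat V_\ell}$ and $|n_{\ell-1}|_{\hat V_\ell}$. I will use that the modulus $|g|_{V_\ell}$ is defined for all $g\in G_\ell$, since $V_\ell$ is normal in $G_\ell$, and is multiplicative. Corollary~\ref{C1L1} and the semidirect product formula for $\Delta_{G_\ell}$ suggest the cleanest route is to express everything through $\Delta_{G_\ell}$. On $H_\ell$ we have
$$
\Delta_{G_\ell}(h)=\Delta_{H_\ell}(h)\,|h|_{V_\ell}^{-1}.
$$
Alternatively, I can use directly the relation $\tilde q_\ell\,\tilde\beta=\ldots$ from the proof of Lemma~\ref{Lphi}. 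In $H_\ell$ we have
$$
n_{\ell-1}\,q_\ell=\tilde\beta_{n_{\ell-1}}(q_\ell)\,n'
$$
with $n'\in N_{\ell-1}$. Applying the multiplicative function $|\cdot|_{V_\ell}$ gives
$$
|\tilde\beta_{n_{\ell-1}}(q_\ell)|_{V_\ell}=|q_\ell|_{V_\ell}\,|n_{\ell-1}|_{V_\ell}\,|n'|_{V_\ell}^{-1}.
$$
If $|\cdot|_{V_\ell}$ is trivial on $N_{\ell-1}$, everything cancels and invariance follows at once. I expect to derive this triviality from $\Delta_{G_\ell}(n)=1$ (Corollary~\ref{C1L1}) combined with $\Delta_{H_\ell}$ on $N_{\ell-1}$. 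Failing that, I will argue directly that the factors $|n_{\ell-1}|$ and $|n'|$ combine with the Jacobian of $d\xi$ to cancel, because $n'$ depends on $q_\ell$ only through a cocycle. Checking this cancellation is the delicate point.

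Finally, pulling the invariant measure back by $\phi_\ell$ gives invariance of $dq_\ell$ under $\tilde\beta_{n_{\ell-1}}$ for almost every $n_{\ell-1}$, and hence for all $n_{\ell-1}$ by continuity of the measures involved.
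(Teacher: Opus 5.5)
Your transport step is correct, but it only reformulates the claim. Push $dq_\ell$ to $|\phi_\ell^{-1}(\xi)|_{\hat V_\ell}^{-1}d\xi$ and use $n_{\ell-1}q_\ell=\tilde\beta_{n_{\ell-1}}(q_\ell)\,\alpha_{q_\ell^{-1}}(n_{\ell-1})$. The factors $|n_{\ell-1}|$ cancel, and what remains is $\int f(\tilde\beta_{n_{\ell-1}}(q_\ell))\,|\alpha_{q_\ell^{-1}}(n_{\ell-1})|_{V_\ell}^{-1}dq_\ell=\int f(q_\ell)\,dq_\ell$. This is exactly the identity in the proof of Proposition~\ref{NUM}, which the paper runs in the opposite direction.

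So the corollary is \emph{equivalent} to $|\cdot|_{V_\ell}\equiv1$ on $N_{\ell-1}$, and nothing is left for a ``cocycle cancellation'' to absorb. The input you hope for is also circular. On $H_\ell$ one has $\Delta_{G_\ell}=\Delta_{H_\ell}\,|\cdot|_{V_\ell}^{-1}$, so $\Delta_{G_\ell}|_{N_{\ell-1}}=1$ (Corollary~\ref{C1L1}) only says $\Delta_{H_\ell}=|\cdot|_{V_\ell}$ on $N_{\ell-1}$. Nothing in ${\rm DOC}_\ell$ forces this to be $1$, and in fact the statement fails without an extra hypothesis.

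Here is a counterexample with $\ell=2$. Take $Q_1\cong Q_2=\{M_{a,b}=\left(\begin{smallmatrix}a&0\\ b&1\end{smallmatrix}\right): a\in\R^*,\ b\in\R\}$, each acting on $\hat V_1=\hat V_2=\R^2$ (columns) by matrix multiplication, with $\xi_{0,1}=\xi_{0,2}=(1,0)^T$. On $V_1$ (rows, $v\mapsto vM_{a,b}^{-1}$) the coordinate $w$ of $v=(u,w)$ is $Q_1$-invariant. Hence $\kappa(v_1q_1):=w(v_1)$ is a character of $G_1=Q_1\ltimes V_1$. Let $g\in G_1$ act on $Q_2$ by $M_{a,b}\mapsto M_{a,e^{\kappa(g)}b}$ (conjugation by ${\rm diag}(1,e^{\kappa(g)})$). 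Let $H_2=G_1\ltimes Q_2$ act on $\hat V_2$ via $qg\mapsto M_q\,{\rm diag}(1,e^{\kappa(g)})$.

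The orbit of $\xi_{0,2}$ is $\{a\neq0\}$, its stabilizer is $G_1$, and $Q_2$ is normal, so ${\rm DOC}_2$ holds. Yet $\tilde\beta_{v_1}(M_{a,b})=M_{a,e^{w(v_1)}b}$ rescales the Haar measure $da\,db/|a|$ of $Q_2$ by $e^{\pm w(v_1)}$. Correspondingly $|v_1|_{V_2}=e^{-w(v_1)}$, and $N_2$ is not unimodular. (Replacing ${\rm diag}(1,e^{\kappa})$ by $1\oplus\rho$, for a faithful representation $\rho$ of $G_1$ with $\det\rho=e^{\kappa}$ on $V_1$, and $Q_2$ by $\R^*\ltimes\R^{\dim\rho}$, even makes the action faithful.)

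The paper's proof avoids $|\cdot|_{V_\ell}$ by applying Lemma~4.12 of Vaes--Vainerman to the matched pair $(Q_\ell,G_{\ell-1})$. It writes the resulting factor as $\Delta_{G_\ell}/\Delta_{G_{\ell-1}}$ evaluated at $\tilde\alpha_{q_\ell}(n_{\ell-1})$. But that matched pair lives in $H_\ell$, so the ambient modular function is $\Delta_{H_\ell}$, and $\Delta_{H_\ell}/\Delta_{G_{\ell-1}}=|\cdot|_{V_\ell}$ on $N_{\ell-1}$. This is the same unresolved quantity; in the example above the paper's displayed identity is false.

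So you have correctly located the crux, but it cannot be closed from the stated hypotheses. One needs an extra assumption such as $\Delta_{H_\ell}|_{N_{\ell-1}}=1$, which holds for instance when $H_\ell$ is unimodular (as for ${\rm GL}_\ell(\K)\ltimes\K^\ell$) or when $N_{\ell-1}$ is a group of unipotent matrices. Under that assumption your argument finishes in one line. Indeed $|\cdot|_{V_\ell}=\Delta_{H_\ell}/\Delta_{G_\ell}=1$ on $N_{\ell-1}$, so your identity gives $|\tilde\beta_{n_{\ell-1}}(q_\ell)|_{V_\ell}=|q_\ell|_{V_\ell}$, and the transported measure is invariant.
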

  \begin{proof}
   Consider  the matched pair $(Q_\ell,G_{\ell-1})$ for $H_\ell$. Since $N_{\ell-1}\subset G_{\ell-1}$, we know from  \cite{VV}*{Lemma 4.12}  that
 for any positive Borel function $f_1\colon Q_\ell\to\R_+$ the following holds:
$$
\int_{Q_\ell} f_1\big(\beta_{n_{\ell-1}}(q_\ell)\big) dq_\ell=
\int_{Q_\ell} f_1(q_\ell)\frac{\Delta_{G_\ell}\big(\tilde\alpha_{q_\ell}(n_{\ell-1})\big)} {\Delta_{G_{\ell-1}}\big(\tilde\alpha_{q_\ell}(n_{\ell-1})\big)}
\frac{\Delta_{Q_\ell}\big(\beta_{n_{\ell-1}^{-1}}(q_\ell)\big)}{\Delta_{Q_\ell}(q_\ell)}dq_\ell.
$$
Because $\Delta_{G_\ell} \big|_{N_\ell}=1$ by Corollary \ref{C1L1}, we deduce that
$$
\int_{Q_\ell} f_1\big(\beta_{n_{\ell-1}}(q_\ell)\big) dq_\ell=\int_{Q_\ell} f_1(q_\ell)
\frac{\Delta_{Q_\ell}\big(\beta_{n_{\ell-1}^{-1}}(q_\ell)\big)}{\Delta_{Q_\ell}(q_\ell)}dq_\ell.
$$
Expressing this equality in terms of the function $f_2(q_\ell)= f_1(q_\ell^{-1})$ and
performing the change of variable $q_\ell\mapsto q_\ell^{-1}$, we get:
$$
\int_{Q_\ell} f_2\big(\tilde\beta_{n_{\ell-1}}(q_\ell)\big) \Delta_{Q_\ell}(q_\ell)^{-1}dq_\ell=\int_{Q_\ell} f_2(q_\ell)
\Delta_{Q_\ell}\big(\tilde \beta_{n_{\ell-1}^{-1}}(q_\ell)\big)^{-1}dq_\ell .
$$
Applying this to the function $f= \Delta_{Q_\ell}\big(\tilde \beta_{n_{\ell-1}^{-1}}(\cdot)\big)^{-1}\, f_2$,
we get
$$
\int_{Q_\ell}f\big(\tilde\beta_{n_{\ell-1}}(q_\ell)\big)\,dq_\ell=\int_{Q_\ell} f(q_\ell)\,dq_\ell,
$$
which concludes the proof.
 \end{proof}

 \begin{proposition}
\label{NUM}
The group $N_\ell$ is unimodular.
\end{proposition}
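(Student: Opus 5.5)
Since $N_\ell=N_{\ell-1}\ltimes V_\ell$, formula \eqref{MFPN} gives $\Delta_{N_\ell}(n_\ell)=\Delta_{N_{\ell-1}}(n_{\ell-1})\,|n_{\ell-1}|_{V_\ell}^{-1}$. The plan is induction on $\ell$. The base case $\ell=1$ is trivial because $N_1=V_1$ is abelian. Assuming $N_{\ell-1}$ is unimodular, it then suffices to show $|n_{\ell-1}|_{V_\ell}=1$ for all $n_{\ell-1}\in N_{\ell-1}$. Equivalently, we need $|n_{\ell-1}|_{\hat V_\ell}=1$, since $|g|_{V}=|g^\flat|_{\hat V}^{-1}$.

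To compute the modulus of the dual action of $N_{\ell-1}\subset G_{\ell-1}\subset H_\ell$ on $\hat V_\ell$, I would transport it to $Q_\ell$ via the measure class isomorphism $\phi_\ell$ of Lemma \ref{Lphi}. That lemma shows that $\phi_\ell$ intertwines the dual action of $V_{\ell-1}$ with $\tilde\beta$, and the dual action of $H_{\ell-1}$ with conjugation. Since $N_{\ell-1}=N_{\ell-2}\ltimes V_{\ell-1}$ with $N_{\ell-2}\subset H_{\ell-1}$, the dual action of $N_{\ell-1}$ corresponds to the restriction of $\tilde\beta$ to $N_{\ell-1}$ acting on $Q_\ell$. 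This restriction is well defined because $\beta|_{N_{\ell-1}}$ preserves $Q_\ell$ by Lemma \ref{CaseS}. Indeed, the proof of Lemma \ref{Lphi} gives $\tilde h_\ell^\flat\phi_\ell(q_\ell)=\phi_\ell(\tilde q_\ell\tilde\beta_{\tilde v_{\ell-1}}(\mathbf C_{\tilde h_{\ell-1}}(q_\ell)))$, and for $\tilde h_\ell=n_{\ell-1}\in G_{\ell-1}$ this reads $n_{\ell-1}^\flat\phi_\ell(q_\ell)=\phi_\ell(\tilde\beta_{n_{\ell-1}}(q_\ell))$.

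Next I would compare measures. As in the proof of Lemma \ref{IDMF}, the pullback of $d\xi_\ell$ under $\phi_\ell$ is $|q_\ell|_{\hat V_\ell}^{-1}$-related to Haar measure. More precisely, $Q_\ell$-equivariance of $\phi_\ell$ shows that $\phi_\ell^*(d\xi_\ell)$ is a relatively invariant measure on $Q_\ell$ with density $\delta(q_\ell)=c\,|q_\ell|_{\hat V_\ell}$ with respect to $dq_\ell$. Then
\[
|n_{\ell-1}|_{\hat V_\ell}^{-1}\!\int f\,d\xi=\int f(n_{\ell-1}^\flat\xi)\,d\xi=\int_{Q_\ell} f\circ\phi_\ell(\tilde\beta_{n_{\ell-1}}(q))\,\delta(q)\,dq .
\]
By Corollary \ref{C2L1}, $dq_\ell$ is $\tilde\beta_{N_{\ell-1}}$-invariant, so it remains to see that $\delta\circ\tilde\beta_{n_{\ell-1}}=\delta$. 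Writing $\delta(q)$ through the modular functions, we have $|q_\ell|_{\hat V_\ell}=|q_\ell|_{V_\ell}^{-1}$, which is the ratio $\Delta_{Q_\ell}(q_\ell)/\Delta_{G_\ell}(q_\ell)$ up to the semidirect product formula for $Q_\ell\ltimes V_\ell$. I would therefore aim to express $\delta$ as a function of $\Delta_{G_\ell}$ and $\Delta_{Q_\ell}$. Corollary \ref{C1L1} gives $\Delta_{G_\ell}\circ\tilde\beta_{n}=\Delta_{G_\ell}$ on $P_\ell$.

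The hard step is showing that $\Delta_{Q_\ell}$, or equivalently $q\mapsto|q|_{V_\ell}$, is $\tilde\beta_{N_{\ell-1}}$-invariant. As a first attempt, I would use that $q\mapsto|q|_{V_\ell}$ is a homomorphism on $Q_\ell$ extending to the homomorphism $h\mapsto|h|_{V_\ell}$ on $H_\ell$. From \eqref{MA} we get $\tilde\beta_n(q)=\alpha\text{-part}\cdot q\cdot n'$ with $n,n'\in N_{\ell-1}\subset H_\ell$, so $|\tilde\beta_n(q)|_{V_\ell}=|q|_{V_\ell}\,|n|_{V_\ell}^{\pm1}|n'|_{V_\ell}^{\pm1}$. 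This reduces the question to the triviality of $|\cdot|_{V_\ell}$ on $N_{\ell-1}$, which is what we are trying to prove, so the argument is circular and another route is needed. To avoid the circularity, I would instead apply Corollary \ref{C2L1} together with the $\tilde\beta$-invariance of $\Delta_{G_\ell}$ directly. The image $\chi(n)=|n|_{\hat V_\ell}$ is then a character such that $\int f\circ\phi_\ell\circ\tilde\beta_n\,\delta=\chi(n)^{-1}\int f\circ\phi_\ell\,\delta$. Combining this with $Q_\ell$-relative invariance, with $|q\,\tilde\beta_n(q')|$ computed through \eqref{CP}, should force $\chi$ to equal a ratio $\delta(\tilde\beta_n(q))/\delta(q)$ that is independent of $q$. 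Evaluating at $q=e$, where $\tilde\beta_n(e)=e$, then gives $\chi\equiv1$.
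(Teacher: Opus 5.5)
Up to its last step your argument is the paper's: you transport the dual action of $N_{\ell-1}$ to $\tilde\beta$ on $Q_\ell$ via $\phi_\ell$ (Lemma~\ref{Lphi}), identify $\phi_\ell^*(d\xi_\ell)$ with $|q_\ell|_{V_\ell}^{-1}dq_\ell$, and invoke Corollary~\ref{C2L1}.

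Granting that corollary, the substitution $q_\ell\mapsto\tilde\beta_{n^{-1}}(q_\ell)$ gives at once $\delta\circ\tilde\beta_n=\chi(n)\,\delta$ a.e.\ for your $\chi(n)=|n|_{\hat V_\ell}$. So relative invariance and \eqref{CP} are not needed. The relation $nq=\tilde\beta_n(q)\,\alpha_{q^{-1}}(n)$, which you discarded as circular, is not circular at this stage. Combined with multiplicativity of $|\cdot|_{V_\ell}$ on $H_\ell$, it turns that identity into $|\alpha_{q^{-1}}(n)|_{V_\ell}=1$ for a.e.\ $q$, which is exactly how the paper concludes. Your own finish, evaluating at the fixed point $q=e$, needs an extra argument: the identity holds only almost everywhere and $\tilde\beta_n$ is merely a measurable, a.e.-defined map. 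Continuity of $\tilde\beta_n$ near $e$ would suffice, e.g.\ in the Lie case where $P_\ell N_\ell$ is open.

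The real problem, which you share with the paper, is that Corollary~\ref{C2L1} is false in general, so the step ``$dq_\ell$ is $\tilde\beta_{N_{\ell-1}}$-invariant'' fails.

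\emph{Failure already in the affine case.} Take $\GL_2(\R)\ltimes\R^2$, with $q=\begin{pmatrix}a&x\\0&1\end{pmatrix}\in Q_2$ and $n=\begin{pmatrix}1&0\\m&1\end{pmatrix}\in N_1$. The map that $\phi_2$ intertwines with $n^\flat$ is $q\mapsto\begin{pmatrix}a/(1+xm)&x/(1+xm)\\0&1\end{pmatrix}$. It preserves $|q|_{V_2}^{-1}dq=|a|^{-3}da\,dx$, but pulls $dq=|a|^{-2}da\,dx$ back to $|1+xm|^{-1}dq$. In the proof of Corollary~\ref{C2L1}, the Vaes--Vainerman formula for the matched pair $(Q_\ell,G_{\ell-1})$ of $H_\ell$ involves $\Delta_{H_\ell}$, not $\Delta_{G_\ell}$. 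It is also evaluated at $\tilde\alpha_{q_\ell}(n_{\ell-1})\in G_{\ell-1}$, which need not lie in $N_\ell$.

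\emph{The proposition itself fails in the stated generality.} Your idea of passing through modular functions is the right diagnostic, but the correct relation is $|h|_{V_\ell}=\Delta_{H_\ell}(h)/\Delta_{G_\ell}(h)$, with $H_\ell$ rather than $Q_\ell$. Since $\Delta_{G_\ell}|_{N_\ell}=1$ by Corollary~\ref{C1L1}, and given that $N_{\ell-1}$ is unimodular, $N_\ell$ is unimodular iff $\Delta_{H_\ell}|_{N_{\ell-1}}=1$. This can fail. Let $E_{ij}$ be the matrix units of $\mathrm{Mat}_5(\R)$ and set:
\begin{itemize}
\item $V_2=\R^5$ (columns), $\hat V_2$ the rows, $\xi_{0,2}=e_5^{T}$;
\item $V_1=\{v_{t,s}=1+tE_{12}+sE_{13}+(e^t-1)E_{44}\}$;
\item $H_1=\{h_{b,d}=1-\tfrac bdE_{23}+(\tfrac1d-1)E_{33}: b\in\R,\ d\in\R^*\}$;
\item $Q_2=\Big\{\begin{pmatrix}1_4&0\\x&a\end{pmatrix}\Big\}$ and $H_2=(H_1\ltimes V_1)\ltimes Q_2$.
\end{itemize}
Since $h_{b,d}v_{t,s}h_{b,d}^{-1}=v_{t,bt+ds}$, the pair $(H_1,V_1)$ satisfies ${\rm DOC}_1$ with $\langle\xi_{0,1},v_{t,s}\rangle=s$. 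The $H_2$-orbit of $\xi_{0,2}$ is $\{\xi_5\neq0\}$, with stabilizer $H_1\ltimes V_1$, which normalizes $Q_2$. So ${\rm DOC}_2$ holds. Yet $|v_{t,s}|_{V_2}=\det v_{t,s}=e^{t}$, hence $\Delta_{N_2}(v_{t,s})=e^{-t}$ and $N_2$ is not unimodular.

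So the statement needs an additional hypothesis. Even where it does hold (e.g.\ for the affine group, where $N_\ell$ is unipotent), it has to be proved without Corollary~\ref{C2L1}.
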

\begin{proof}
We have seen in Lemma \ref{Lphi} that the  measure class  isomorphism $\phi_\ell:Q_\ell\to \hat V_\ell$ defined in \eqref{phi}
intertwines the action $\tilde\beta$ of $N_{\ell-1}$ on $Q_\ell$ with
the dual action of $N_{\ell-1}$ on $\hat V_\ell$.
Since the pull-back of the Haar  measure of $\hat V_\ell$ under the map $\phi_\ell$ is $|q_\ell|_{V_\ell}^{-1}dq_\ell$,
for any Borel function $f_1:Q_\ell\to\R_+$, we get
$$
|n_{\ell-1}|_{V_\ell}^{-1}\int_{Q_\ell}f_1\big(\tilde\beta_{n_{\ell-1}}(q_\ell)\big)\,|q_\ell|_{V_\ell}^{-1}dq_\ell=\int_{Q_\ell} f_1(q_\ell)\,|q_\ell|_{V_\ell}^{-1}dq_\ell.
$$
In terms of the function $f=|\cdot|_{V_\ell}^{-1}\,f_1$, this means that
$$
\int_{Q_\ell}f\big(\tilde\beta_{n_{\ell-1}}(q_\ell)\big)\,|n_{\ell-1}|_{V_\ell}^{-1}\,|q_\ell|_{V_\ell}^{-1}\,\big|\tilde\beta_{n_{\ell-1}}(q_\ell)\big|_{V_\ell}dq_\ell=\int_{Q_\ell} f(q_\ell)\,dq_\ell.
$$
Since $n_{\ell-1}\, q_\ell=\tilde\beta_{n_{\ell-1}}(q_\ell)\,\alpha_{q_\ell^{-1}}(n_{\ell-1})$,
we finally obtain
$$
\int_{Q_\ell}f\big(\tilde\beta_{n_{\ell-1}}(q_\ell)\big)\,\big|\alpha_{q_\ell^{-1}}(n_{\ell-1})\big|_{V_\ell}^{-1}dq_\ell=\int_{Q_\ell} f(q_\ell)\,dq_\ell.
$$
However, we have seen in Corollary \ref{C2L1} that  the Haar measure on  $Q_\ell$ is invariant under  $\tilde\beta$.
Therefore we get $(|\cdot|_{V_\ell}\big)|_{N_{\ell-1}}=1$, and the result follows from the expression \eqref{MFPN} for the modular function of $N_\ell$.
\end{proof}

\begin{remark}
\label{InvM}
From the proof of Proposition \ref{NUM} and the relation \eqref{MA}, we see that the restriction  to $P_\ell$ of the modulus function of $V_\ell$ is $\tilde\beta$-invariant:
$$
\big|\tilde\beta_{n_\ell}(p_\ell)\big|_{V_\ell}=|p_\ell|_{V_\ell}.
$$
\end{remark}

 \begin{remark}
 \label{IF1}
Using \cite{VV}*{Lemma 4.12} exactly like we did in the proof of Corollary \ref{C2L1}, the unimodularity of $N_\ell$  allows us to prove that
  for   $f\in L^1(N_\ell)$ and $p_\ell\in P_\ell$, we have:
$$
\int_{N_\ell}f\big(\tilde\alpha_{p_\ell}(n_\ell)\big)\, J_{\tilde\alpha_\ell}(p_\ell,n_\ell)\,dn_\ell
=\int_{N_\ell} f(n_\ell)\,dn_\ell,
$$
where
$$
J_{\tilde\alpha_\ell}(p_\ell,n_\ell)=\Delta_{G_\ell}^{-1}(p_\ell) \,\Delta_{P_\ell}\big(\beta_{n_{\ell-1}^{-1}}(p_\ell)\big) .
$$
Since moreover $\tilde\alpha_{q_\ell}(v_\ell \,n_{\ell-1})={\bf C}_{q_\ell}(v_\ell) \,\tilde\alpha_{q_\ell}(n_{\ell-1})$, we get from
\eqref{HMPN} that
$$
\int_{N_{\ell-1}}f\big(\tilde\alpha_{q_\ell}(n_{\ell-1})\big)\, |q_\ell|_{V_\ell}^{-1}\,J_{\tilde\alpha_\ell}(q_\ell,n_{\ell-1})\,dn_{\ell-1}
=\int_{N_{\ell-1}} f(n_{\ell-1})\,dn_{\ell-1},
$$
for   $f\in L^1(N_{\ell-1})$ and $q_\ell\in Q_\ell$.
 \end{remark}

\begin{corollary}
\label{C1P1}
The left-invariant Haar measure on $P_\ell$ is invariant under the action $\tilde\beta$ of $N_\ell$.
\end{corollary}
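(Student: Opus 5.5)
We argue by induction on $\ell$, using the product decomposition $dp_\ell=|p_{\ell-1}|_{Q_\ell}^{-1}dq_\ell\,dp_{\ell-1}$ from \eqref{HMPN}. For $\ell=1$ we have $P_1=Q_1$, $N_1=V_1$, and by Lemma \ref{CaseS} the restriction $\beta|_{V_\ell}$ is trivial, so $\tilde\beta$ is trivial and there is nothing to prove. For general $\ell$, since $\tilde\beta$ is an action of $N_\ell$ and $N_\ell=V_\ell N_{\ell-1}$, it suffices to check invariance separately under $V_\ell$ and under $N_{\ell-1}$. Invariance under $V_\ell$ is immediate: $\beta|_{V_\ell}$ is trivial, hence so is $\tilde\beta|_{V_\ell}$.

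For $n_{\ell-1}\in N_{\ell-1}$ and $p_\ell=q_\ell p_{\ell-1}$, I would use the cocycle rule \eqref{CP}, transported to $\tilde\beta$ by conjugating with the inversion:
$$
\tilde\beta_{n_{\ell-1}}(q_\ell p_{\ell-1})=\tilde\beta_{n_{\ell-1}}(q_\ell)\,\tilde\beta_{n'}(p_{\ell-1}),
$$
where $n'$ is an element of $N_{\ell-1}$ depending on $q_\ell$ and $n_{\ell-1}$; in the $\tilde\alpha$ notation it is $n'=\tilde\alpha_{q_\ell^{-1}}(n_{\ell-1})$, up to the precise convention. By Lemma \ref{CaseS}, $\alpha|_{P_\ell}$ preserves $N_{\ell-1}$ and $\beta|_{N_{\ell-1}}$ preserves $Q_\ell$; moreover $\beta_\ell|_{N_{\ell-1}\times P_{\ell-1}}=\beta_{\ell-1}$, so $\tilde\beta_{n'}(p_{\ell-1})\in P_{\ell-1}$. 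The map is therefore triangular:
$$
(q_\ell,p_{\ell-1})\mapsto\big(\tilde\beta_{n_{\ell-1}}(q_\ell),\,\tilde\beta_{n'(q_\ell)}(p_{\ell-1})\big).
$$

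Integrate $f(\tilde\beta_{n_{\ell-1}}(p_\ell))$ against $dp_\ell$, doing the $p_{\ell-1}$ integral first with $q_\ell$ fixed. The density $|p_{\ell-1}|_{Q_\ell}^{-1}$ equals $|p_{\ell-1}|_{V_\ell}$ by Lemma \ref{IDMF}, and by Remark \ref{InvM} (applied at depth $\ell-1$, i.e. for $\tilde\beta$ of $N_{\ell-1}\subset N_\ell$) it is invariant under $\tilde\beta_{n'}$. Hence the $dp_{\ell-1}$ integral is preserved by the induction hypothesis applied to $P_{\ell-1}$ and $N_{\ell-1}$. What remains is an integral over $q_\ell$ of a function of $\tilde\beta_{n_{\ell-1}}(q_\ell)$, and this is invariant by Corollary \ref{C2L1}. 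The conclusion is
$$
\int_{P_\ell} f\big(\tilde\beta_{n_{\ell-1}}(p_\ell)\big)\,dp_\ell=\int_{P_\ell} f(p_\ell)\,dp_\ell .
$$

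The main obstacle is the bookkeeping in the cocycle identity for $\tilde\beta$: one must get the correct $n'$ and confirm that it lies in $N_{\ell-1}$. A second point needs care: Remark \ref{InvM} is stated for $|\cdot|_{V_\ell}$ on $P_\ell$ under all of $N_\ell$. It has to be restricted to $P_{\ell-1}$ and $N_{\ell-1}$, and this uses that $\tilde\beta_{n'}$ preserves $P_{\ell-1}$, which holds since the actions at depth $\ell$ restrict to those at depth $\ell-1$. If the cocycle form proves awkward, I would instead apply \cite{VV}*{Lemma 4.12} directly to the matched pair $(P_\ell,N_\ell)$, as in Corollary \ref{C2L1}. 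Since $N_\ell$ is unimodular (Proposition \ref{NUM}) and $\Delta_{G_\ell}|_{N_\ell}=1$ (Corollary \ref{C1L1}), the Jacobian reduces to a ratio of $\Delta_{P_\ell}$'s, and the inversion trick from Corollary \ref{C2L1} then removes it.
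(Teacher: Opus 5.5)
Your proof is correct, but it takes a different route from the paper's.

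\textbf{The paper's argument.} It is a one-step argument. Since $\Delta_{G_\ell}|_{N_\ell}=1=\Delta_{N_\ell}$ (Corollary~\ref{C1L1} and Proposition~\ref{NUM}), the homogeneous space $G_\ell/N_\ell$ carries a $G_\ell$-invariant measure. The conull map $P_\ell\to G_\ell/N_\ell$, $p_\ell\mapsto p_\ell N_\ell$, intertwines left translations by $P_\ell$ with the $P_\ell$-action, so the pulled-back measure is a multiple of Haar measure. Because $n_\ell p_\ell=\tilde\beta_{n_\ell}(p_\ell)\,\alpha_{p_\ell^{-1}}(n_\ell)$, the same map intertwines the $N_\ell$-action with $\tilde\beta$, and invariance follows at once. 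There is no induction and no cocycle bookkeeping. The a.e.-definedness of $\tilde\beta$ is also harmless, since $\tilde\beta$ is just the honest continuous action on $G_\ell/N_\ell$ transported to $P_\ell$.

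\textbf{Your argument.} You work layer by layer through $P_\ell=P_{\ell-1}\ltimes Q_\ell$, and the steps check out:
\begin{itemize}
\item The factorization $\tilde\beta_{n_{\ell-1}}(q_\ell p_{\ell-1})=\tilde\beta_{n_{\ell-1}}(q_\ell)\,\tilde\beta_{n'}(p_{\ell-1})$ holds with $n'=\alpha_{q_\ell^{-1}}(n_{\ell-1})$, equivalently $\tilde\alpha_{q_\ell^{-1}}(n_{\ell-1}^{-1})^{-1}$. This element lies in $N_{\ell-1}$ by Lemma~\ref{CaseS}.
\item The density $|p_{\ell-1}|_{Q_\ell}^{-1}$ is $\tilde\beta_{n'}$-invariant by Lemma~\ref{IDMF}. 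This uses the depth-$\ell$ statement of Remark~\ref{InvM}, about $|\cdot|_{V_\ell}$, restricted to $N_{\ell-1}\times P_{\ell-1}$. It is not its depth-$(\ell-1)$ version, which concerns $|\cdot|_{V_{\ell-1}}$; your last paragraph shows you meant the right thing.
\item The induction hypothesis and Corollary~\ref{C2L1} then close the Fubini computation.
\item There is no circularity, since neither Remark~\ref{InvM} nor Corollary~\ref{C2L1} uses the statement being proved.
\end{itemize}

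\textbf{Comparison.} Your route shows explicitly that $\tilde\beta$ is triangular and measure-preserving layer by layer with respect to $dp_\ell=|p_{\ell-1}|_{Q_\ell}^{-1}dq_\ell\,dp_{\ell-1}$. That is exactly the form in which these facts are used later, for instance in the proof that $\pi_\ell\simeq\tilde\pi_\ell$. The price is more machinery (Lemma~\ref{IDMF}, Remark~\ref{InvM}, Corollary~\ref{C2L1}, an induction) and more care with a.e.-defined compositions.

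\textbf{Your fallback.} The alternative via \cite{VV}*{Lemma 4.12} for the matched pair $(P_\ell,N_\ell)$ also works. The factor $\Delta_{G_\ell}/\Delta_{N_\ell}$ evaluated on $N_\ell$ equals $1$, and the inversion trick from Corollary~\ref{C2L1} removes the remaining $\Delta_{P_\ell}$-ratio. This is essentially the analytic counterpart of the paper's homogeneous-space argument.
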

\begin{proof}
By Corollary  \ref{C1L1} and Proposition \ref{NUM}, we have $\Delta_{G_\ell}\big|_{N_\ell}=1 = \Delta_{N_\ell}$. Hence, the homogeneous space $G_\ell/N_\ell$ carries
a $G_\ell$-invariant measure. Consider the measure class isomorphism $P_\ell\to G_\ell/N_\ell$, $p_\ell\mapsto p_\ell N_\ell$. Trivially, this map
intertwines the restriction to $P_\ell$ of the action of $G_\ell$ on $G_\ell/N_\ell$, with the left action of $P_\ell$ on itself.
Therefore the pullback to  $P_\ell$ of the $G_\ell$-invariant measure on $G_\ell/N_\ell$ is a multiple of the left-invariant Haar measure.
But this map also
intertwines the restriction to $N_\ell$ of the  action of $G_\ell$ on $G_\ell/N_\ell$ with the action $\tilde\beta$ of $N_\ell$ on $P_\ell$.
Hence the left-invariant Haar measure of $P_\ell$ is $\tilde\beta$-invariant.
\end{proof}

\begin{corollary}
\label{C2P1}
The left-invariant Haar measures of $G_\ell$, $P_\ell$, and $N_\ell$ can be normalized such that for all $f\in L^1(G_\ell)$ we have
$$
\int_{G_\ell} f(g_\ell)\,dg_\ell=\int_{P_\ell\times N_\ell} f( p_\ell n_\ell)\,d p_\ell dn_\ell,
$$
\end{corollary}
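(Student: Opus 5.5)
The plan is to show that the pushforward of $dp_\ell\,dn_\ell$ under the multiplication map $m\colon P_\ell\times N_\ell\to G_\ell$, $(p_\ell,n_\ell)\mapsto p_\ell n_\ell$, is a left-invariant measure on $G_\ell$. Uniqueness of Haar measure then gives the formula up to a positive constant, and we absorb that constant into the normalization. Since $(P_\ell,N_\ell)$ is a matched pair, $m$ is injective with image of full measure, so the pushforward is a well-defined Radon measure class on $G_\ell$. Define the functional
$$
I(f):=\int_{P_\ell\times N_\ell} f(p_\ell n_\ell)\,dp_\ell\,dn_\ell,\qquad f\in C_c(G_\ell),\ f\ge0 .
$$
Left invariance needs only to be checked under translation by elements of $P_\ell$ and of $N_\ell$, because these generate $G_\ell$: a.e. $g=pn$, and invariance under a set of full measure of elements together with continuity of translation suffices.

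Invariance under $P_\ell$ is immediate, because $\tilde p\,p_\ell n_\ell=(\tilde p p_\ell)n_\ell$ and $dp_\ell$ is left-invariant. For $N_\ell$, write $\tilde n\,p_\ell=\tilde\beta_{\tilde n}(p_\ell)\,\alpha_{p_\ell^{-1}}(\tilde n^{-1})^{-1}$; this is the relation \eqref{MA}, rewritten as in the proof of Proposition \ref{NUM}. For brevity set $n'(p_\ell):=\alpha_{p_\ell^{-1}}(\tilde n^{-1})^{-1}\in N_\ell$. Then
$$
I(f(\tilde n\,\cdot))=\int_{P_\ell}\int_{N_\ell} f\big(\tilde\beta_{\tilde n}(p_\ell)\,n'(p_\ell)\,n_\ell\big)\,dn_\ell\,dp_\ell .
$$
For fixed $p_\ell$, the inner integral is unchanged when we drop $n'(p_\ell)$, by left invariance of $dn_\ell$. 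We are left with $\int_{P_\ell} F(\tilde\beta_{\tilde n}(p_\ell))\,dp_\ell$, where $F(p):=\int_{N_\ell}f(pn)\,dn$. Corollary \ref{C1P1} says $dp_\ell$ is $\tilde\beta$-invariant, so this equals $\int_{P_\ell}F(p_\ell)\,dp_\ell=I(f)$.

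The main obstacle is measure-theoretic. The actions $\alpha,\beta$ are only defined almost everywhere, so we must justify the Fubini steps and the a.e. identity $\tilde n p_\ell n_\ell=\tilde\beta_{\tilde n}(p_\ell)n'n_\ell$. For this we use the nonnegativity of $f$ (Tonelli) and the fact that, for each fixed $\tilde n$, the identity holds for a.e. $p_\ell$. We also need $I$ to be finite and nonzero on some $f$, so that it defines a nontrivial Radon measure. This follows because $m$ is a measure class isomorphism onto a conull set. A slicker alternative is to identify the invariant measure on $G_\ell/N_\ell\cong P_\ell$, as in the proof of Corollary \ref{C1P1}, with $dp_\ell$ and invoke the quotient integral formula $\int_{G}f=\int_{G/N}\int_N f(gn)\,dn\,d\dot g$, which is valid since $\Delta_{G_\ell}|_{N_\ell}=\Delta_{N_\ell}=1$. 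I would present this second argument, as it avoids the a.e. issues and uses only Corollary \ref{C1L1}, Proposition \ref{NUM} and Corollary \ref{C1P1}.
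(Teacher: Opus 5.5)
Your proposal is correct, but it takes a different route from the paper. The paper quotes the general matched-pair integration formula of Vaes--Vainerman (their Lemma 4.10),
$$
\int_{G_\ell}f\,dg_\ell=\int_{P_\ell\times N_\ell}f(p_\ell n_\ell^{-1})\,\Delta_{G_\ell}(n_\ell)^{-1}\,dp_\ell\,dn_\ell .
$$
It then removes the modular factor and the inversion $n_\ell\mapsto n_\ell^{-1}$ using $\Delta_{G_\ell}|_{N_\ell}=1$ (Corollary \ref{C1L1}) and unimodularity of $N_\ell$ (Proposition \ref{NUM}).

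Your preferred argument uses the same two inputs differently. Together they give $\Delta_{G_\ell}|_{N_\ell}=\Delta_{N_\ell}$, so Weil's quotient formula for $G_\ell/N_\ell$ applies directly. The identification of the invariant measure on $G_\ell/N_\ell$ with $dp_\ell$ is exactly the first half of the proof of Corollary \ref{C1P1}; the $\tilde\beta$-invariance itself is not needed. Your route stays inside the paper's own results. The citation is shorter and places the statement within a formula valid for arbitrary matched pairs, where $\Delta_G|_{G_2}\neq\Delta_{G_2}$ in general.

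Your first argument is also valid. It shows that Corollary \ref{C1P1} implies the statement, and the converse follows immediately from the formula. Two small corrections are needed.

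\begin{itemize}
\item The decomposition is $\tilde n\,p_\ell=\tilde\beta_{\tilde n}(p_\ell)\,\alpha_{p_\ell^{-1}}(\tilde n)$, as in the proof of Proposition \ref{NUM}. What you wrote, $\alpha_{p_\ell^{-1}}(\tilde n^{-1})^{-1}=\tilde\alpha_{p_\ell^{-1}}(\tilde n)$, is in general a different element of $N_\ell$. Since only the $P_\ell$-factor enters your computation, nothing changes.
\item Finiteness of $I$ on $C_c(G_\ell)$ does not follow from $m$ being a measure class isomorphism, because $P_\ell N_\ell$ need not be closed. For ${\rm GL}_2(\R)\ltimes\R^2$ it is the open set where the $(2,2)$ entry of the linear part is nonzero. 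Instead, use that a nonzero $\sigma$-finite left-invariant Borel measure on a second countable locally compact group is automatically a multiple of Haar measure.
\end{itemize}
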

\begin{proof}
We know  by  \cite{VV}*{Lemma 4.10} that the left-invariant Haar measures of $G_\ell$, $P_\ell$, and $N_\ell$ can be normalized such that for $ f\in L^1(G_\ell)$, we have
$$
\int_{G_\ell} f(g_\ell)\,dg_\ell=\int_{P_\ell\times N_\ell} f( p_\ell n_\ell^{-1})\,\Delta_{G_\ell}(n_\ell)^{-1}\,d p_\ell dn_\ell.
$$
The result follows because $\Delta_{G_\ell}\big|_{N_\ell}=1$  by Corollary  \ref{C1L1}  and because $N_\ell$ is unimodular by Proposition \ref{NUM}.
\end{proof}

 \section{Kohn--Nirenberg quantization}

\subsection{A scalar Fourier transform}
We start by observing that the unimodular group $N_\ell$ has a distinguished character.
\begin{lemma}
\label{Dchi}
The map $\chi_\ell :N_\ell\to \T$ given by
$$
\chi_\ell(v_\ell\cdots v_1):=e^{i\langle \xi_{0,\ell},v_\ell\rangle}\cdots e^{i\langle \xi_{0,1},v_1\rangle},
$$
defines a unitary character.
\end{lemma}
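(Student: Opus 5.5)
We argue by induction on $\ell$, using the semidirect product decomposition $N_\ell=N_{\ell-1}\ltimes V_\ell$ with $V_\ell$ abelian and normal in $N_\ell$. For $\ell=1$ we have $N_1=V_1$, and $\chi_1=e^{i\langle\xi_{0,1},\cdot\rangle}$ is a character of $V_1$ by definition of $\hat V_1$. For the inductive step, write $\chi_\ell(v_\ell n_{\ell-1})=e^{i\langle\xi_{0,\ell},v_\ell\rangle}\chi_{\ell-1}(n_{\ell-1})$, where $\chi_{\ell-1}$ is a character of $N_{\ell-1}$ by the induction hypothesis. Continuity (or at least measurability) is clear from the product decomposition $N_\ell\cong V_\ell\times N_{\ell-1}$ as topological spaces.

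It remains to check multiplicativity. For $n_\ell=v_\ell n_{\ell-1}$ and $\tilde n_\ell=\tilde v_\ell\tilde n_{\ell-1}$ we have
$$
n_\ell\tilde n_\ell=\big(v_\ell\,{\bf C}_{n_{\ell-1}}(\tilde v_\ell)\big)\,\big(n_{\ell-1}\tilde n_{\ell-1}\big).
$$
Hence
$$
\chi_\ell(n_\ell\tilde n_\ell)=e^{i\langle\xi_{0,\ell},v_\ell\rangle}\,e^{i\langle\xi_{0,\ell},{\bf C}_{n_{\ell-1}}(\tilde v_\ell)\rangle}\,\chi_{\ell-1}(n_{\ell-1})\chi_{\ell-1}(\tilde n_{\ell-1}).
$$
The whole proof therefore reduces to the identity
$$
e^{i\langle\xi_{0,\ell},{\bf C}_{n_{\ell-1}}(v_\ell)\rangle}=e^{i\langle\xi_{0,\ell},v_\ell\rangle}\quad\text{for all } n_{\ell-1}\in N_{\ell-1},\ v_\ell\in V_\ell.
$$
By the definition of the dual action, this says exactly that $(n_{\ell-1}^{-1})^\flat\xi_{0,\ell}=\xi_{0,\ell}$, i.e.\ that $N_{\ell-1}$ fixes $\xi_{0,\ell}$.

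This invariance is the key point, and it holds because $N_{\ell-1}$ is generated by $V_{\ell-1},\dots,V_1$, all of which lie in $G_{\ell-1}$. Indeed, $V_{\ell-1}\subset G_{\ell-1}$, and $V_j\subset G_j\subset H_{j+1}=Q_{j+1}\bowtie G_j$, so inductively $G_j\subset G_{\ell-1}$ for $j\le\ell-1$. Thus $N_{\ell-1}\subset G_{\ell-1}={\rm Stab}_{H_\ell}(\xi_{0,\ell})$, and the invariance follows.

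The only care needed is the bookkeeping of the inclusions $G_{j}\subset G_{j+1}$, together with the fact that the ordering $n_\ell=v_\ell\cdots v_1$ is a genuine parametrization. The latter follows from the iterated semidirect product structure, which also gives uniqueness of the factors. I expect no serious obstacle beyond verifying this stabilizer inclusion.
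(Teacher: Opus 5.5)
Your proof is correct and takes the same approach as the paper: induction via $n_\ell=v_\ell n_{\ell-1}$, the product formula $n_\ell\tilde n_\ell=v_\ell\,{\bf C}_{n_{\ell-1}}(\tilde v_\ell)\,n_{\ell-1}\tilde n_{\ell-1}$, and invariance of $\xi_{0,\ell}$ under $N_{\ell-1}\subset G_{\ell-1}={\rm Stab}_{H_\ell}(\xi_{0,\ell})$. Your explicit verification of the inclusion $N_{\ell-1}\subset G_{\ell-1}$, via $G_j\subset H_{j+1}\subset G_{j+1}$, spells out a step the paper states without comment.
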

\begin{proof}
We proceed by induction. For $\ell=1$ there is nothing to prove.
So assume that $\chi_{\ell-1}$ is a character of $N_{\ell-1}$.
Take $n_\ell=v_\ell n_{\ell-1},\tilde n_\ell=\tilde v_\ell\tilde  n_{\ell-1}\in N_{\ell}$, with
$v_\ell,\tilde v_\ell\in V_\ell$ and  $n_{\ell-1},\tilde n_{\ell-1}\in N_{\ell-1}$. Since $N_{\ell-1}$ normalizes $V_\ell$,
we have $n_\ell \tilde n_\ell=v_\ell{\bf C}_{n_{\ell-1}}(\tilde  v_\ell)n_{\ell-1}\tilde  n_{\ell-1}$. From the relation
$\chi_\ell(v_\ell n_{\ell-1})=e^{i\langle \xi_{0,\ell},v_\ell\rangle}\chi_{\ell-1}(n_{\ell-1})$,
we deduce
$$
\chi_\ell(n_\ell \tilde  n_\ell)=e^{i\langle \xi_{0,\ell},v_\ell\rangle}e^{i\langle \xi_{0,\ell},{\bf C}_{n_{\ell-1}}(\tilde v_\ell)\rangle}\chi_{\ell-1}(n_{\ell-1}\tilde  n_{\ell-1}),
$$
and the proof follows, because $N_{\ell-1}\subset G_{\ell-1}$ acts trivially on $\xi_{0,\ell}\in\hat V_\ell$.
\end{proof}

An important function on $G_\ell$ is the following (almost  everywhere defined) Fourier type kernel:
\begin{align}
\label{FTK}
\E_\ell(p_\ell ,n_\ell):=\chi_\ell\big(\tilde\alpha_{ p_\ell^{-1}}(n_\ell)\big).
\end{align}
This function satisfies some nice identities.
\begin{lemma}
\label{SE}
We have  almost everywhere:
$$
\E_\ell(\tilde p_\ell^{-1}p_\ell,n_\ell)=\E_\ell\big(p_\ell,\tilde\alpha_{\tilde p_\ell}(n_\ell)\big)\quad\mbox{and}\quad
\E_\ell(p_\ell,\tilde n_\ell^{-1}n_\ell)=\E_\ell(p_\ell,\tilde n_\ell^{-1})\,\E_\ell\big(\tilde\beta_{\tilde n_\ell}(p_\ell),n_\ell\big).
$$
\end{lemma}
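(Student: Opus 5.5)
The plan is to derive both identities from the defining relation \eqref{MA}, rewritten for the conjugated actions $\tilde\alpha,\tilde\beta$. Then I apply the character property of $\chi_\ell$ from Lemma~\ref{Dchi}.

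\textbf{Step 1: a factorization for $\tilde\alpha$.} Replacing $n_\ell$ by $n_\ell^{-1}$ in \eqref{MA} gives, almost everywhere,
$$
p_\ell\, n_\ell=\tilde\alpha_{p_\ell}(n_\ell)\,\beta_{n_\ell^{-1}}(p_\ell),
$$
that is, the $N_\ell P_\ell$-factorization of $p_\ell n_\ell$. Since $P_\ell\cap N_\ell=\{e\}$, this factorization is unique whenever it exists. Uniqueness is the tool for everything below.

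\textbf{Step 2: the first identity.} I factor $p_\ell\tilde p_\ell n_\ell$ in two ways. First, $p_\ell(\tilde p_\ell n_\ell)=p_\ell\,\tilde\alpha_{\tilde p_\ell}(n_\ell)\,\beta(\cdots)$, and then I rewrite $p_\ell\,\tilde\alpha_{\tilde p_\ell}(n_\ell)$ by Step~1. Second, $(p_\ell\tilde p_\ell)n_\ell$ directly by Step~1. Uniqueness of the $N_\ell P_\ell$-factorization gives
$$
\tilde\alpha_{p_\ell\tilde p_\ell}=\tilde\alpha_{p_\ell}\circ\tilde\alpha_{\tilde p_\ell}\quad\text{a.e.}
$$
This is the (tilde version of the) action property, also available from \cite{VV}. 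Applying it with $p_\ell^{-1}$ in place of $p_\ell$, we get $\tilde\alpha_{(\tilde p_\ell^{-1}p_\ell)^{-1}}=\tilde\alpha_{p_\ell^{-1}}\circ\tilde\alpha_{\tilde p_\ell}$. Evaluating $\chi_\ell$ on both sides yields the first formula directly from the definition \eqref{FTK}.

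\textbf{Step 3: the second identity.} Here I need a cocycle rule for $\tilde\alpha$ in its $N_\ell$-argument, the tilde analogue of \eqref{CP}. Writing
$$
p_\ell n_\ell n'_\ell=\tilde\alpha_{p_\ell}(n_\ell)\,\beta_{n_\ell^{-1}}(p_\ell)\,n'_\ell=\tilde\alpha_{p_\ell}(n_\ell)\,\tilde\alpha_{\beta_{n_\ell^{-1}}(p_\ell)}(n'_\ell)\,(\cdots)
$$
and using uniqueness again gives
$$
\tilde\alpha_{p_\ell}(n_\ell n'_\ell)=\tilde\alpha_{p_\ell}(n_\ell)\,\tilde\alpha_{\beta_{n_\ell^{-1}}(p_\ell)}(n'_\ell).
$$
I apply this with $p_\ell^{-1}$ in place of $p_\ell$, $\tilde n_\ell^{-1}$ in place of $n_\ell$, and $n_\ell$ in place of $n'_\ell$. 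The second factor then carries the subscript $\beta_{\tilde n_\ell}(p_\ell^{-1})$, which equals $\tilde\beta_{\tilde n_\ell}(p_\ell)^{-1}$ by definition of $\tilde\beta$. Since $\chi_\ell$ is multiplicative by Lemma~\ref{Dchi}, applying $\chi_\ell$ splits the product into $\E_\ell(p_\ell,\tilde n_\ell^{-1})\,\E_\ell\big(\tilde\beta_{\tilde n_\ell}(p_\ell),n_\ell\big)$, as claimed.

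\textbf{Main obstacle.} I expect the only genuine difficulty to be measure-theoretic bookkeeping: all actions are defined only almost everywhere, and the identities must hold off a null set in the joint variables. To handle this, I would use that $(p_\ell,n_\ell)\mapsto p_\ell n_\ell$ is a measure class isomorphism onto a conull set (Corollary~\ref{C2P1}). I would also use that the actions preserve the relevant measure classes: Corollary~\ref{C1P1} for $\tilde\beta$ and Remark~\ref{IF1} for $\tilde\alpha$. Combined with Fubini, this ensures that each substitution above stays inside a conull set of the appropriate product space.
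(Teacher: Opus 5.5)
Your proof is correct and is essentially the paper's argument: the first identity is just the action property of $\tilde\alpha$ (which the paper calls obvious), and the second is the cocycle relation \eqref{CP} in the $N_\ell$-variable combined with multiplicativity of $\chi_\ell$. The only difference is that you rederive the $\tilde\alpha$-form of \eqref{CP} from uniqueness of the $N_\ell P_\ell$-factorization, rather than quoting \cite{VV}*{Lemma 4.9} and inverting, which makes your argument self-contained.
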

\begin{proof}
The first identity is obvious, and the second is a  consequence of the relation  \eqref{CP}:
\begin{multline*}
\chi_\ell\big(\tilde\alpha_{ p_\ell^{-1}}(\tilde n_\ell^{-1}n_\ell)\big)=\overline{\chi}_\ell\big(\alpha_{ p_\ell^{-1}}( n_\ell^{-1}\tilde n_\ell)\big)\\
=\overline{\chi}_\ell\big(\alpha_{ \beta_{\tilde n_\ell}(p_\ell^{-1})}( n_\ell^{-1})\big)\overline{\chi}_\ell\big(\alpha_{ p_\ell^{-1}}(\tilde n_\ell)\big)
=\chi_\ell\big(\tilde\alpha_{ \tilde\beta_{\tilde n_\ell}(p_\ell)^{-1}}( n_\ell)\big)\chi_\ell\big(\tilde\alpha_{ p_\ell^{-1}}(\tilde n_\ell^{-1})\big).
\end{multline*}
\end{proof}
\begin{corollary}
We have
$\E_\ell\big(\tilde\beta_{n_\ell}(p_\ell),n_\ell\big)=\overline{\E_\ell}(p_\ell,n_\ell^{-1})$.
\end{corollary}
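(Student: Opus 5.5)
The plan is to specialize the second identity of Lemma \ref{SE} to the case $\tilde n_\ell=n_\ell$. With this choice the left-hand side becomes $\E_\ell(p_\ell,e)$, and Lemma \ref{SE} gives
$$
\E_\ell(p_\ell,e)=\E_\ell(p_\ell,n_\ell^{-1})\,\E_\ell\big(\tilde\beta_{n_\ell}(p_\ell),n_\ell\big)
$$
for almost all $(p_\ell,n_\ell)$.

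The first step is to show that $\E_\ell(p_\ell,e)=1$. By definition $\E_\ell(p_\ell,e)=\chi_\ell\big(\tilde\alpha_{p_\ell^{-1}}(e)\big)$. Since $\alpha_{p}(e)=e$ (from $p\,e^{-1}=\alpha_p(e)^{-1}\beta_e(p)$, uniqueness of the decomposition forces $\alpha_p(e)=e$ and $\beta_e(p)=p$), we get $\tilde\alpha_{p}(e)=e$. Because $\chi_\ell$ is a character by Lemma \ref{Dchi}, this yields $\E_\ell(p_\ell,e)=1$.

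Next, since $\E_\ell$ is $\T$-valued, we have $\E_\ell(p_\ell,n_\ell^{-1})^{-1}=\overline{\E_\ell}(p_\ell,n_\ell^{-1})$. Rearranging the specialized identity then gives the claim.

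The only subtle point is measure-theoretic. Lemma \ref{SE} holds almost everywhere on $P_\ell\times N_\ell\times N_\ell$, while the specialization $\tilde n_\ell=n_\ell$ lies on a null diagonal. I would handle this by rerunning the proof of Lemma \ref{SE} directly with $\tilde n_\ell=n_\ell$. The relation \eqref{CP} gives
$$
e=\alpha_{p^{-1}}(n_\ell^{-1}n_\ell)=\alpha_{\beta_{n_\ell}(p^{-1})}(n_\ell^{-1})\,\alpha_{p^{-1}}(n_\ell).
$$
This holds wherever the actions are defined, i.e.\ for almost all $(p_\ell,n_\ell)$, since it is an identity between the two pointwise factorizations in the matched pair. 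Applying $\chi_\ell$ and unwinding the definitions of $\tilde\alpha$ and $\tilde\beta$ exactly as in the displayed computation of Lemma \ref{SE} gives the statement almost everywhere.
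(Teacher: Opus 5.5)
Your proof is correct and takes the same approach as the paper. The paper states this corollary without proof, as the specialization $\tilde n_\ell=n_\ell$ of the second identity in Lemma \ref{SE} together with $\E_\ell(p_\ell,e)=\chi_\ell(e)=1$. Your extra step of rerunning \eqref{CP} pointwise to avoid restricting an a.e.\ identity to a null diagonal is a sound refinement of a point the paper glosses over.
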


\begin{lemma}
\label{RecE}
We have the following inductive relation:
$$
\E_\ell(p_\ell ,n_\ell)=e^{i\langle \phi_\ell(q_\ell),v_\ell\rangle}\,\E_{\ell-1}\big(p_{\ell-1} ,\tilde\alpha_{q_\ell^{-1}}(n_{\ell-1})\big),
$$
where the map $\phi_\ell:Q_\ell\to\hat V_\ell$ is given by \eqref{phi}.
\end{lemma}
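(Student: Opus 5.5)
The plan is to unwind the definition $\E_\ell(p_\ell,n_\ell)=\chi_\ell\big(\tilde\alpha_{p_\ell^{-1}}(n_\ell)\big)$ using the factorizations \eqref{NormPN}. First, since $\tilde\alpha$ is a (measurable) action of $P_\ell$, writing $p_\ell^{-1}=p_{\ell-1}^{-1}q_\ell^{-1}$ gives almost everywhere
$$
\tilde\alpha_{p_\ell^{-1}}(n_\ell)=\tilde\alpha_{p_{\ell-1}^{-1}}\big(\tilde\alpha_{q_\ell^{-1}}(v_\ell n_{\ell-1})\big).
$$
For the inner term we use the identity recorded in Remark~\ref{IF1}, $\tilde\alpha_{q_\ell}(v_\ell n_{\ell-1})={\bf C}_{q_\ell}(v_\ell)\,\tilde\alpha_{q_\ell}(n_{\ell-1})$, applied to $q_\ell^{-1}$. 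By Lemma~\ref{CaseS}, $\tilde\alpha_{q_\ell^{-1}}(n_{\ell-1})=:m\in N_{\ell-1}$ and ${\bf C}_{q_\ell^{-1}}(v_\ell)=:w\in V_\ell$.

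Next I compute $\tilde\alpha_{p_{\ell-1}^{-1}}(w\,m)$. Conjugating the product rule \eqref{CP} by inversion gives a rule of the form $\tilde\alpha_p(n n')=\tilde\alpha_{\tilde\beta_{n'}(p)}(n)\,\tilde\alpha_p(n')$. I will verify this carefully from \eqref{CP} and the definitions of $\tilde\alpha,\tilde\beta$; the a.e. bookkeeping here is the main technical point. Applying it with $p=p_{\ell-1}^{-1}$, $n=w$, $n'=m$, we use two facts. First, $\tilde\beta_m(p_{\ell-1}^{-1})=:r\in P_{\ell-1}$, since $\beta$ restricted to $N_{\ell-1}\times P_{\ell-1}$ is the depth-$(\ell-1)$ action. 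Second, $\tilde\alpha$ of elements of $P_{\ell-1}$ on $V_\ell$ is just conjugation, by Lemma~\ref{CaseS} (the case $i\le j$). This yields
$$
\tilde\alpha_{p_\ell^{-1}}(n_\ell)={\bf C}_{r}\big({\bf C}_{q_\ell^{-1}}(v_\ell)\big)\,\tilde\alpha_{p_{\ell-1}^{-1}}\big(\tilde\alpha_{q_\ell^{-1}}(n_{\ell-1})\big),
$$
where the first factor lies in $V_\ell$ and the second in $N_{\ell-1}$.

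Finally, I apply $\chi_\ell$ using $\chi_\ell(v\,n)=e^{i\langle\xi_{0,\ell},v\rangle}\chi_{\ell-1}(n)$ for $v\in V_\ell$, $n\in N_{\ell-1}$, as in the proof of Lemma~\ref{Dchi}.
\begin{itemize}
\item For the $V_\ell$-factor: since $r\in P_{\ell-1}\subset G_{\ell-1}$ stabilizes $\xi_{0,\ell}$, we get $e^{i\langle\xi_{0,\ell},{\bf C}_r{\bf C}_{q_\ell^{-1}}(v_\ell)\rangle}=e^{i\langle\xi_{0,\ell},q_\ell^{-1}.v_\ell\rangle}=e^{i\langle q_\ell^\flat\xi_{0,\ell},v_\ell\rangle}=e^{i\langle\phi_\ell(q_\ell),v_\ell\rangle}$.
\item For the $N_{\ell-1}$-factor: $\chi_{\ell-1}\big(\tilde\alpha_{p_{\ell-1}^{-1}}(\tilde\alpha_{q_\ell^{-1}}(n_{\ell-1}))\big)=\E_{\ell-1}\big(p_{\ell-1},\tilde\alpha_{q_\ell^{-1}}(n_{\ell-1})\big)$, by definition \eqref{FTK} at depth $\ell-1$, using $\alpha_\ell|_{P_{\ell-1}\times N_{\ell-1}}=\alpha_{\ell-1}$.
\end{itemize}
Together these give the claimed relation. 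The expected obstacle is justifying the inverted cocycle rule and the action property of $\tilde\alpha$ almost everywhere. If that is delicate, I would instead derive the decomposition directly from the defining relation \eqref{MA}, i.e.\ by factoring $p_\ell^{-1}n_\ell$ into $P_\ell N_\ell$ form by hand using that $P_\ell$ normalizes $V_\ell$.
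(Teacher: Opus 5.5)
Your strategy is the same as the paper's: split $\tilde\alpha_{p_\ell^{-1}}(v_\ell n_{\ell-1})$ into a $V_\ell$-factor times an $N_{\ell-1}$-factor, then evaluate $\chi_\ell$ using that $P_{\ell-1}$ fixes $\xi_{0,\ell}$. However, the product rule you plan to verify is false, so the step built on it fails as written. Applying \eqref{CP} to $(nn')^{-1}=n'^{-1}n^{-1}$ and inverting gives
\[
\tilde\alpha_p(nn')=\tilde\alpha_p(n)\,\tilde\alpha_{\beta_{n^{-1}}(p)}(n'),
\]
so the \emph{first} factor is untwisted and the second is twisted. It is not $\tilde\alpha_{\tilde\beta_{n'}(p)}(n)\,\tilde\alpha_p(n')$. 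Your version even contradicts the identity from Remark~\ref{IF1} that you use one line earlier. Applied to $\tilde\alpha_{q_\ell^{-1}}(v_\ell n_{\ell-1})$, it would produce ${\bf C}_{\tilde\beta_{n_{\ell-1}}(q_\ell^{-1})}(v_\ell)$ instead of ${\bf C}_{q_\ell^{-1}}(v_\ell)$, and these differ in general. For instance, in ${\rm GL}_2(\R)\ltimes\R^2$ the explicit formula for $\beta$ in the last section gives $\beta_{v_1}(q_2)\neq q_2$ as soon as $bc\neq0$. Consequently your displayed formula with ${\bf C}_r$, $r=\tilde\beta_m(p_{\ell-1}^{-1})$, is wrong in general. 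Your final value comes out right only because $r$ and the correct element $p_{\ell-1}^{-1}$ both lie in $P_{\ell-1}$, which fixes $\xi_{0,\ell}$.

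The repair is immediate and lands exactly on the paper's argument. With the correct rule and $n=w\in V_\ell$, the twist is $\beta_{w^{-1}}(p_{\ell-1}^{-1})=p_{\ell-1}^{-1}$, because $\beta|_{V_\ell}$ is trivial (Lemma~\ref{CaseS}). This gives \eqref{TRL}: $\tilde\alpha_{p_\ell^{-1}}(n_\ell)={\bf C}_{p_\ell^{-1}}(v_\ell)\,\tilde\alpha_{p_{\ell-1}^{-1}}\big(\tilde\alpha_{q_\ell^{-1}}(n_{\ell-1})\big)$. The paper does this in one stroke. It applies \eqref{CP} to $n_\ell^{-1}=n_{\ell-1}^{-1}v_\ell^{-1}$ with the full $p_\ell^{-1}$, where the twist is by $\beta_{v_\ell^{-1}}$ and hence trivial. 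Only afterwards does it split $p_\ell^{-1}=p_{\ell-1}^{-1}q_\ell^{-1}$ on the $N_{\ell-1}$-factor. Your fallback also works and is perhaps quickest, provided you factor in the $N_\ell P_\ell$ order. By \eqref{MA} one has $pn=\tilde\alpha_p(n)\,\beta_{n^{-1}}(p)$, and $p_\ell^{-1}v_\ell n_{\ell-1}={\bf C}_{p_\ell^{-1}}(v_\ell)\,p_\ell^{-1}n_{\ell-1}$. It is worth getting the exact identity rather than just its image under $\chi_\ell$, since \eqref{TRL} is reused in Remark~\ref{FFD} and Proposition~\ref{CRF}.
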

\begin{proof}
By \eqref{CP} we have:
$$
\tilde\alpha_{ p_\ell^{-1}}(n_\ell)=\alpha_{ p_\ell^{-1}}(n_{\ell-1}^{-1}v_\ell^{-1})^{-1}=
\big(\alpha_{ \beta_{v_\ell^{-1}}(p_\ell^{-1})}(n_{\ell-1}^{-1})\alpha_{ p_\ell^{-1}}(v_\ell^{-1})\big)^{-1},
$$
which gives
$$
\tilde\alpha_{ p_\ell^{-1}}(n_\ell)=
\big(\alpha_{ p_\ell^{-1}}(n_{\ell-1}^{-1}){\bf C}_{ p_\ell^{-1}}(v_\ell^{-1})\big)^{-1}={\bf C}_{ p_\ell^{-1}}(v_\ell)\tilde\alpha_{ p_\ell^{-1}}(n_{\ell-1})
={\bf C}_{ p_\ell^{-1}}(v_\ell)\tilde\alpha_{ p_{\ell-1}^{-1}}(\tilde\alpha_{q_\ell^{-1}}(n_{\ell-1}))
$$
since $P_\ell$ acts by conjugation on $V_\ell$ (and $V_\ell$ acts trivially on $P_\ell$).
Therefore we get
$$
\chi_\ell\big(\tilde\alpha_{ p_\ell^{-1}}(n_\ell)\big)=\chi_\ell\big({\bf C}_{ p_\ell^{-1}}(v_\ell)\big)\chi_\ell\big(\tilde\alpha_{ p_{\ell-1}^{-1}}(\tilde\alpha_{q_\ell^{-1}}(n_{\ell-1}))\big)
=e^{i\langle p_\ell^\flat\xi_{0,\ell},v_\ell\rangle}\chi_{\ell-1}\big(\tilde\alpha_{ p_{\ell-1}^{-1}}(\tilde\alpha_{q_\ell^{-1}}(n_{\ell-1}))\big).
$$
This completes the proof, since  $p_{\ell-1}\in{\rm Stab}_{H_\ell}(\xi_{0,\ell})$, so $p_\ell^\flat\xi_{0,\ell}=q_\ell^{\flat}(p_{\ell-1}^\flat \xi_{0,\ell})=q_\ell^{\flat}\xi_{0,\ell}$.
\end{proof}

We will see soon that $\E_\ell$ is the phase of the operator kernel of a Fourier-type  transform from $L^2(N_\ell)$ to $L^2(P_\ell)$.
In order to define this transform, consider the unitary  operators $U_{\phi_\ell}:L^2(\hat V_\ell)\to L^2(Q_\ell)$ and $V_{\tilde\alpha_\ell}:L^2(Q_\ell\times N_{\ell-1})\to L^2(Q_\ell\times N_{\ell-1})$ defined by
$$
(U_{\phi_\ell}f)(q_\ell):=|q_\ell |_{V_\ell}^{-1/2}f(\phi_\ell(q_\ell)),
$$
$$
(V_{\tilde\alpha_\ell}f)(q_\ell,n_{\ell-1}):= |q_\ell|_{V_\ell}^{-1/2}\,
J_{\tilde\alpha_\ell}(q_\ell,n_{\ell-1})^{1/2}\,
f(q_\ell,\tilde\alpha_{q_\ell}(n_{\ell-1})),
$$
where $J_{\tilde\alpha_\ell}$ is the function defined in Remark~\ref{IF1}.
It will also be convenient to use the following standard unitary operators:
$$
V_{1,\ell}:L^2(P_\ell)\to L^2(Q_\ell\times P_{\ell-1}),\quad (V_{1,\ell}f)(q_\ell,p_{\ell-1}):= |p_{\ell-1}|_{Q_\ell}^{-1/2}\,f(q_\ell p_{\ell-1}),
$$
$$
V_{2,\ell}:L^2(N_\ell)\to L^2(V_\ell \times N_{\ell-1}),\quad (V_{2,\ell}f)(v_\ell,n_{\ell-1}):=f(v_\ell n_{\ell-1}).
$$

\begin{definition}
\label{UFT}
Let $\F_\ell:L^2(N_\ell)\to L^2(P_\ell)$ be the unitary operator defined inductively by $\F_1:=V_{\phi_1}\,\F_{V_1}$ and
$$
\F_\ell:=V_{1,\ell}^*\,(1\otimes \F_{\ell-1})\,V_{\tilde\alpha_\ell}\,( U_{\phi_\ell}\,\F_{V_\ell}\otimes 1)\,V_{2,\ell}.
$$
\end{definition}
\begin{remark}
\label{FFD}
At  the formal level it is not difficult  to see that the  operator $\F_\ell$ is an integral transform with kernel
$ \overline{\E_\ell}(p_\ell ,n_\ell)\, J_{\tilde\alpha_\ell}(p_{\ell}^{-1},n_\ell)^{1/2}$.
Indeed, consider the integral operator
$$
\tilde\F_\ell f(p_\ell):=\int_{N_\ell}\, \overline{\E_\ell}(p_\ell ,n_\ell)\, J_{\tilde\alpha_\ell}(p_{\ell}^{-1},n_\ell)^{1/2}\,f(n_\ell)\,dn_\ell.
$$
We have by Lemma \ref{RecE} that
$$
\E_\ell(p_\ell ,n_\ell)=e^{i\langle \phi_\ell(q_\ell),v_\ell\rangle}\,\E_{\ell-1}\big(p_{\ell-1} ,\tilde\alpha_{q_\ell^{-1}}(n_{\ell-1})\big).
$$
Moreover, the relation
\begin{align}
\label{TRL}
\tilde\alpha_{ p_\ell^{-1}}(n_\ell)
={\bf C}_{ p_\ell^{-1}}(v_\ell)\tilde\alpha_{ p_{\ell-1}^{-1}}(\tilde\alpha_{q_\ell^{-1}}(n_{\ell-1})),
\end{align}
which was used in the proof of Lemma \ref{RecE}, immediately gives
$$
J_{\tilde\alpha_\ell}(p_{\ell}^{-1},n_\ell)=|p_\ell|^{-1}_{V_\ell}\,|q_\ell|_{V_\ell}\,J_{\tilde\alpha_\ell}(q_\ell^{-1},n_{\ell-1})\,
J_{\tilde\alpha_{\ell-1}}\big(p_{\ell-1}^{-1},\tilde\alpha_{q_\ell^{-1}}(n_{\ell-1})\big).
$$
Using Lemma \ref{IDMF} to write $|p_\ell|_{V_\ell}=|q_\ell|_{V_\ell}|p_{\ell-1}|_{Q_\ell}^{-1}$, we therefore get
\begin{multline*}
\tilde\F_\ell f(p_\ell)=|p_{\ell-1}|_{Q_\ell}^{1/2}\int_{V_\ell\times N_{\ell-1}} e^{-i\langle \phi_\ell(q_\ell),v_\ell\rangle}\,\overline{\E_{\ell-1}}\big(p_{\ell-1} ,\tilde\alpha_{q_\ell^{-1}}(n_{\ell-1})\big)\\
J_{\tilde\alpha_\ell}(q_\ell^{-1},n_{\ell-1})^{1/2}\,
J_{\tilde\alpha_{\ell-1}}\big(p_{\ell-1}^{-1},\tilde\alpha_{q_\ell^{-1}}(n_{\ell-1})\big)^{1/2}\,f(v_\ell n_{\ell-1})\,dv_\ell dn_{\ell-1},
\end{multline*}
which by induction is seen to be exactly $\F_\ell$.
But unless we can show that the map $(p_\ell ,n_\ell)\mapsto J_{\tilde\alpha}(p_{\ell}^{-1},n_\ell)^{1/2}$ is locally integrable, all this remains formal
and therefore we will  keep the initial definition of $\F_\ell$.
\end{remark}

\begin{definition}
\label{URPN}
Let  $U_{\tilde \alpha_\ell}$ be the unitary representation of $P_\ell$   on $L^2(N_\ell)$ and let $U_{\tilde\beta_\ell}$ be the unitary representation of  $N_\ell$
 on $L^2(P_\ell)$ given by
$$
U_{\tilde \alpha_\ell}( p_\ell)f(n_\ell):=J_{\tilde\alpha_\ell}(p_\ell^{-1},n_\ell)^{1/2}\, f\big(\tilde\alpha_{p_\ell^{-1}}(n_\ell)\big)
\quad\mbox{and}\quad U_{\tilde \beta_\ell}( n_\ell)f(p_\ell):= f\big(\tilde\beta_{n_\ell^{-1}}(p_\ell)\big).
$$
\end{definition}

We have the following commutation relations.
\begin{proposition}
\label{CRF}
For $(p_\ell,n_\ell)\in P_\ell\times N_\ell$, we have
$$
\lambda_{  p_\ell}\,\F_\ell=\F_\ell\, U_{\tilde \alpha_\ell}(p_\ell) \quad\mbox{and}\quad \F_\ell\, \lambda_{n_\ell}=\overline\E_\ell(\cdot,n_\ell)\,U_{\tilde \beta_\ell}(n_\ell)\,\F_\ell.
$$
\end{proposition}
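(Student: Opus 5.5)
Both identities will be proved by induction on $\ell$, working with the inductive Definition~\ref{UFT} of $\F_\ell$ rather than with the formal kernel of Remark~\ref{FFD}. Using the kernel would require local integrability of $J_{\tilde\alpha_\ell}^{1/2}$, which we do not have. The formal kernel computation still serves as a guide.

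Heuristically, if $\F_\ell$ has kernel $\overline{\E_\ell}(p_\ell,n_\ell)J_{\tilde\alpha_\ell}(p_\ell^{-1},n_\ell)^{1/2}$, then both relations reduce to the identities of Lemma~\ref{SE}, together with the integration formula of Remark~\ref{IF1} and the $\tilde\beta$-invariance of $dp_\ell$ (Corollary~\ref{C1P1}). For the second relation we would use
$$
\E_\ell(p_\ell,\tilde n_\ell^{-1}n_\ell)=\E_\ell(p_\ell,\tilde n_\ell^{-1})\,\E_\ell\big(\tilde\beta_{\tilde n_\ell}(p_\ell),n_\ell\big).
$$
The rigorous argument mirrors this factor by factor.

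For $\ell=1$ we have $P_1=Q_1$, $N_1=V_1$, $\beta$ trivial and $\tilde\alpha_{q}(v)={\bf C}_q(v)$. The statement then reduces to two facts about $\F_{V_1}$. The first is that it intertwines translation by $v$ with multiplication by $e^{-i\langle\xi,v\rangle}$. The second is that it intertwines the conjugation action, suitably normalized by $|q|_{V_1}^{1/2}$, with the dual action. Lemma~\ref{Lphi} then transports the dual action to left translation on $Q_1$ via $\phi_1$.

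For the inductive step it suffices to check each relation on generators. For the first relation we take $p_\ell=q_\ell$ and $p_\ell\in P_{\ell-1}$ separately; since both sides are representations of $P_\ell$ (once it is known that $U_{\tilde\alpha_\ell}$ is one), this is enough. For the second relation, note that $n_\ell\mapsto \overline\E_\ell(\cdot,n_\ell)U_{\tilde\beta_\ell}(n_\ell)$ is a representation precisely by the cocycle identity of Lemma~\ref{SE}. So again it suffices to treat $n_\ell=v_\ell\in V_\ell$ and $n_\ell\in N_{\ell-1}$.

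The generator cases go as follows.
\begin{itemize}
\item $v_\ell\in V_\ell$: $\lambda_{v_\ell}$ acts only on the first variable in $V_{2,\ell}$. After $\F_{V_\ell}$ it becomes multiplication by $e^{-i\langle\xi_\ell,v_\ell\rangle}$, which after $U_{\phi_\ell}$ is $e^{-i\langle\phi_\ell(q_\ell),v_\ell\rangle}=\overline\E_\ell(p_\ell,v_\ell)$ by Lemma~\ref{RecE}. By Lemma~\ref{CaseS}, $\tilde\beta_{v_\ell}$ is trivial, so this is the whole claim.
\item $n_{\ell-1}\in N_{\ell-1}$: $\lambda_{n_{\ell-1}}$ acts on $v_\ell n_{\ell-1}'$ by conjugating $v_\ell$ and translating $n'_{\ell-1}$. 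The conjugation becomes the dual action on $\hat V_\ell$, which by Lemma~\ref{Lphi} is $\tilde\beta$ on $Q_\ell$. The translation passes through $V_{\tilde\alpha_\ell}$ using the cocycle relations~\eqref{CP}, and then the induction hypothesis applies to $\F_{\ell-1}$.
\item $q_\ell\in Q_\ell$: on the $V_\ell$-variable $\tilde\alpha_{q_\ell^{-1}}$ is conjugation, and on $N_{\ell-1}$ it is handled by $V_{\tilde\alpha_\ell}$ via the factorization~\eqref{TRL}. The result is left translation in the $Q_\ell$-variable.
\item $p_{\ell-1}\in P_{\ell-1}$: since $P_{\ell-1}$ normalizes $Q_\ell$ and fixes $\xi_{0,\ell}$, the same equivariance yields conjugation in $Q_\ell$ combined with the induction hypothesis for $\F_{\ell-1}$. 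This is compatible with $V_{1,\ell}$ thanks to Lemma~\ref{IDMF}.
\end{itemize}

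The main obstacle is the bookkeeping of the Jacobian factors $J_{\tilde\alpha_\ell}^{1/2}$, $|q_\ell|_{V_\ell}^{\pm1/2}$ and $|p_{\ell-1}|_{Q_\ell}^{\pm1/2}$. One needs a multiplicative factorization of $J_{\tilde\alpha_\ell}(p_\ell^{-1},n_\ell)$ that is compatible with the group law of $P_\ell$ and with the splitting $p_\ell=q_\ell p_{\ell-1}$; this is the factorization derived in Remark~\ref{FFD}. One also needs the cocycle property of $J_{\tilde\alpha_\ell}$, which holds because it is a Radon--Nikodym derivative (Remark~\ref{IF1}). I would isolate these two facts first as a lemma, using Remark~\ref{InvM} and Corollary~\ref{C1L1} to cancel the modular factors. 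After that, each generator check is a direct substitution.
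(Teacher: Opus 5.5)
Your proposal is correct and follows essentially the same route as the paper. Both argue by induction on $\ell$ through Definition~\ref{UFT}: the base case comes from the intertwining properties of $\F_{V_1}$ and $U_{\phi_1}$, and the inductive step is checked separately on $Q_\ell$ and $P_{\ell-1}$ for the first relation and on $V_\ell$ and $N_{\ell-1}$ for the second, using Lemmas~\ref{Lphi}, \ref{IDMF}, \ref{RecE} and the factorization~\eqref{TRL}. Your explicit appeal to the second identity of Lemma~\ref{SE}, to justify reducing the second relation to generators, is a welcome addition, since the paper leaves that reduction implicit.
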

\begin{proof}
In addition to the representations appearing in Definition \ref{URPN}, let us consider the unitary representations $C_{V_\ell}$ and $C_{\hat V_\ell}$ of $H_\ell$ on
$L^2(V_\ell)$ and $L^2(\hat V_\ell)$, respectively, and the unitary representation $C_{Q_\ell}$ of~$P_{\ell-1}$ on~$L^2 (Q_\ell )$ defined by
\begin{align*}
C_{V_\ell}(h_\ell)f(v_\ell)&:=|h_\ell|_{V_\ell}^{-1/2}\,f({\bf C}_{h_\ell^{-1}}(v_\ell)),\\
C_{\hat V_\ell}(h_\ell)f(\xi_\ell)&:=|h_\ell|_{V_\ell}^{1/2}\,f({h_\ell^{-1}}^\flat\xi_\ell),\\
C_{Q_\ell}(p_{\ell-1})f(q_\ell)&:=|p_{\ell-1}|_{Q_\ell}^{-1/2}\,f({\bf C}_{p_{\ell-1}^{-1}}(q_\ell)).
\end{align*}
Lastly, we let $U_{\tilde\alpha,N_{\ell-1}}$ and $U_{\tilde\beta,Q_\ell}$ be the unitary representations of $Q_\ell$  on $L^2(N_{\ell-1})$  and  of $N_{\ell-1}$  on $L^2(Q_\ell )$ defined by
\begin{align*}
U_{\tilde\alpha,N_{\ell-1}}( q_\ell)f(n_{\ell-1})&:=|q_\ell|_{V_\ell}^{1/2}\,J_{\tilde\alpha_\ell}(q_\ell^{-1},n_{\ell-1})^{1/2}\, f\big(\tilde\alpha_{q_\ell^{-1}}(n_{\ell-1})\big),\\
U_{\tilde\beta,Q_\ell}(n_{\ell-1} )f(q_\ell)&:= f\big(\tilde\beta_{n_{\ell-1}^{-1}}(q_\ell)\big).
\end{align*}

Since the map $\phi_\ell:Q_\ell\to\hat V_\ell$ intertwines the left action of $Q_\ell$ on itself with the dual action on~$\hat V_\ell$,
 we deduce
 \begin{align}
 \label{CR1}
  \lambda_{q_\ell}\, U_{\phi_\ell}\,\F_{V_\ell}= U_{\phi_\ell}\,C_{\hat V_\ell}(q_\ell)\F_{V_\ell}= U_{\phi_\ell}\,\F_{V_\ell}\, C_{V_\ell}(q_\ell).
  \end{align}
This already proves the first relation for   $\ell=1$,  since in this case we have $U_{\tilde \alpha_1}=C_{V_1}$ and  $\CF_1=V_{\phi_1}\,\F_{V_1}$.

To prove the first commutation relation for all $\ell$, we proceed by induction and first consider the case where $p_\ell$ belongs to $Q_\ell$. From
the identities $ \lambda_{  q_\ell}V_{1,\ell}^*=V_{1,\ell}^*( \lambda_{q_\ell}\otimes 1)$, $( \lambda_{q_\ell}\otimes 1)  V_{\tilde\alpha_\ell} = V_{\tilde\alpha_\ell} (
  \lambda_{q_\ell}\otimes U_{\tilde\alpha,N_{\ell-1}}(q_\ell))$ and \eqref{CR1}, we get
 $$
 \lambda_{  q_\ell}\,\F_\ell=V_{1,\ell}^*\,(1\otimes \F_{\ell-1})\, V_{\tilde\alpha_\ell} \,
 (  U_{\phi_\ell}\,\F_{V_\ell}\otimes 1)(C_{V_\ell}(q_\ell)\otimes U_{\tilde\alpha,N_{\ell-1}}(q_\ell))\,V_{2,\ell},
$$
 and this is what we need,  because we have
$ \tilde\alpha_{ p_\ell^{-1}}(n_\ell)
={\bf C}_{ q_\ell^{-1}}(v_\ell)\tilde\alpha_{q_\ell^{-1}}(n_{\ell-1})$ by \eqref{TRL}.

Next, consider the case where  $p_\ell$ belongs to $P_{\ell-1}$.  We have
$\lambda_{p_{\ell-1}}V_{1,\ell}^*=V_{1,\ell}^*(C_{Q_\ell}(p_{\ell-1})\otimes \lambda_{p_{\ell-1}})$,
and thus we get by the induction hypothesis:
$$
\lambda_{p_{\ell-1}}\,\F_\ell=V_{1,\ell}^*\,(1\otimes \F_{\ell-1})\, (C_{Q_\ell}(p_{\ell-1})\otimes U_{\tilde\alpha_{\ell-1}}(p_{\ell-1}))\,
 V_{\tilde\alpha_\ell} \,( U_{\phi_\ell}\,\F_{V_\ell}\otimes 1)\,V_{2,\ell}.
$$
It is easy to see that  $ C_{Q_\ell} (p_{\ell-1})\otimes U_{\tilde\alpha_{\ell-1}}(p_{\ell-1})$ commutes with
$ V_{\tilde\alpha_\ell} $, and since ${p_{\ell-1}^{-1}}^\flat\xi_{0,\ell}=\xi_{0,\ell}$, we have
$ C_{Q_\ell} (p_{\ell-1})\, U_{\phi_\ell}\,\F_{V_\ell}= U_{\phi_\ell}\, C_{\hat V_\ell} (p_{\ell-1})\,\F_{V_\ell}= U_{\phi_\ell}\,\F_{V_\ell}\, C_{V_\ell}(p_{\ell-1})$. Hence we obtain:
 $$
\lambda_{p_{\ell-1}}\,\F_\ell=V_{1,\ell}^*\,(1\otimes \F_{\ell-1})\,
 V_{\tilde\alpha_\ell} \,( U_{\phi_\ell}\,\F_{V_\ell}\otimes 1)( C_{Q_\ell} (p_{\ell-1})\otimes U_{\tilde\alpha_{\ell-1}}(p_{\ell-1}))\,V_{2,\ell},
$$
and we conclude again by \eqref{TRL}.

\smallskip

 Let us now prove the second commutation relation. When $\ell=1$, the result follows  from
$U_{\tilde \beta_1}(v_1)={\rm Id}$ and $\E_1(q_1,v_1)=e^{i\langle\phi_1(q_1),v_1\rangle}$. We then proceed by induction on $\ell$ and first consider the case where
 $n_\ell$ belongs to $V_\ell$. From the identities $V_{2,\ell}\,\lambda_{v_\ell}=(\lambda_{v_\ell}\otimes 1)\,V_{2,\ell}$,
$ U_{\phi_\ell}\,\F_{V_\ell}\,\lambda_{v_\ell}= e^{-i\langle\phi_\ell(\cdot),v_\ell\rangle}\, U_{\phi_\ell}\,\F_{V_\ell}$
and $ V_{\tilde\alpha_\ell} \,(e^{-i\langle\phi_\ell(\cdot),v_\ell\rangle}\otimes 1)=(e^{-i\langle\phi_\ell(\cdot),v_\ell\rangle}\otimes 1)\,  V_{\tilde\alpha_\ell} $,
we deduce that
$$
\F_\ell\,\lambda_{v_\ell}=V_{1,\ell}^*\,(e^{i\langle\phi_\ell(\cdot),v_\ell\rangle}\otimes 1)\,(1\otimes \F_{\ell-1})\, V_{\tilde\alpha_\ell} \,( U_{\phi_\ell}\,\F_{V_\ell}\otimes 1)\,V_{2,\ell},
$$
and the result follows, because  $\E_\ell(p_\ell, v_\ell)=e^{i\langle\phi_\ell(q_\ell),v_\ell\rangle}$.

Next, we consider the case where  $n_\ell$ belongs to $N_{\ell-1}$.
The relations $V_{2,\ell}\,\lambda_{n_{\ell-1}}=( C_{V_\ell} (n_{\ell-1})\otimes \lambda_{n_{\ell-1}})\,V_{2,\ell}$ and $ U_{\phi_\ell}\,\F_{V_\ell}\, C_{V_\ell} (n_{\ell-1})=
 U_{\phi_\ell}\, C_{\hat V_\ell} (n_{\ell-1})\,\F_{V_\ell}=U_{\tilde\beta,Q_\ell}(n_{\ell-1})\, U_{\phi_\ell}\,\F_{V_\ell}$ give
$$
\F_\ell\,\lambda_{n_{\ell-1}}=V_{1,\ell}^*\,(1\otimes \F_{\ell-1})\, V_{\tilde\alpha_\ell} \,(U_{\tilde\beta,Q_\ell}(n_{\ell-1})
\otimes \lambda_{n_{\ell-1}})\,( U_{\phi_\ell}\,\F_{V_\ell}\otimes 1)\,V_{2,\ell}.
$$
Observe now that
$$
 V_{\tilde\alpha_\ell} \,(U_{\tilde\beta,Q_\ell}( n_{\ell-1})
\otimes \lambda_{ n_{\ell-1}})f(\tilde q_\ell,\tilde n_{\ell-1})= |q_\ell|_{V_\ell}^{-1/2}\,
J_{\tilde\gamma_\ell}(\tilde q_\ell,n_{\ell-1})^{1/2}\,f\big(\tilde\beta_{n_{\ell-1}^{-1}}(\tilde q_\ell),n_{\ell-1}^{-1}\tilde\alpha_{\tilde q_\ell}(\tilde n_{\ell-1}) \big).
$$
Since
$$
n_{\ell-1}^{-1}\tilde\alpha_{\tilde q_\ell}(\tilde n_{\ell-1})=\tilde\alpha_{\tilde\beta_{n_{\ell-1}^{-1}}(\tilde q_\ell)}\big(\tilde\alpha_{\tilde q_\ell^{-1}}(n_{\ell-1})^{-1}\tilde n_{\ell-1}\big),
$$
we deduce that
$$
 V_{\tilde\alpha_\ell} \,(U_{\tilde\beta,Q_\ell}( n_{\ell-1})
\otimes \lambda_{ n_{\ell-1}})f(\tilde q_\ell,\tilde n_{\ell-1})= V_{\tilde\alpha_\ell} f\big(\tilde\beta_{n_{\ell-1}^{-1}}(\tilde q_\ell),\tilde\alpha_{\tilde q_\ell^{-1}}(n_{\ell-1})^{-1}\tilde n_{\ell-1} \big).
$$
Therefore we get by the induction hypothesis that
\begin{multline*}
(1\otimes \F_{\ell-1})\, V_{\tilde\alpha_\ell} \,(U_{\tilde\beta,Q_\ell}( n_{\ell-1})\otimes \lambda_{ n_{\ell-1}})f(q_\ell,p_{\ell-1})=\\
\overline\E_{\ell-1}\big(p_{\ell-1}, \tilde\alpha_{q_\ell^{-1}}(n_{\ell-1})\big)\,
(1\otimes \F_{\ell-1})\, V_{\tilde\alpha_\ell} f\big(\tilde\beta_{n_{\ell-1}^{-1}}(q_\ell),\tilde\beta_{\tilde\alpha_{ q_\ell^{-1}}(n_{\ell-1})^{-1} }(p_{\ell-1})\big).
\end{multline*}
A simple computation shows that
$$
U_{\tilde \beta_\ell}(n_{\ell-1})\,V_{1,\ell}^*f(p_\ell q_{\ell-1})=f\big(\tilde\beta_{n_{\ell-1}^{-1}}(q_\ell),\tilde\beta_{\tilde\alpha_{ q_\ell^{-1}}(n_{\ell-1})^{-1} }(p_{\ell-1})\big),
$$
and the conclusion follows from the equality $\E_{\ell-1}\big(p_{\ell-1}, \tilde\alpha_{q_\ell^{-1}}(n_{\ell-1})\big)=\E_\ell(p_\ell,n_{\ell-1})$ proven in Lemma \ref{RecE}.
\end{proof}

\begin{remark}
From Proposition \ref{CRF} we can deduce that the map $n_\ell\mapsto \overline\E_\ell(\cdot,n_\ell)\,U_{\tilde \beta}(n_\ell)$ also defines a representation of $N_\ell$ on $L^2(P_\ell)$.
This fact is equivalent to the second identity in Lemma \ref{SE},
and this implies that the Fourier kernel $\E_\ell$ is  a $1$-cocycle, that is, as an element of $Z^1(N_\ell;\mathcal U(L^\infty(P_\ell)))$, where the action of $N_\ell$  on $L^\infty(P_\ell)$
is given by $\Ad\,U_{\tilde\beta}$.
\end{remark}

\subsection{The representation}

By Lemma~\ref{lem:typeI} we already   know that $G_\ell$ possesses a single  class of square-integrable
irreducible unitary representations. By the discussion at the beginning of Section~\ref{ssec:class}, a representative of this class is given by the Mackey representation
$$
\tilde\pi_\ell:={\Ind}_{G_{\ell-1}\ltimes V_\ell}^{G_\ell}(\tilde\pi_{\ell-1}\otimes \xi_{0,\ell}).
$$
However, this representative is not suitable for us and instead we consider another induced representation
\begin{align}
\label{TheRep}
\pi_\ell:={\Ind}_{N_{\ell}}^{G_\ell}(\chi_\ell),
\end{align}
where $\chi_\ell:N_\ell\to\T$ is the unitary character given in Definition \ref{Dchi}.

Since $(P_\ell,N_\ell)$ is a matched pair for $G_\ell$, it is natural to realize the representation $\pi_\ell$ on $L^2(P_\ell)$.
\begin{lemma}
For $\vf\in L^2(P_\ell)$ and $(p_\ell,n_\ell)\in P_\ell\times N_\ell$, we have
$$
\pi_\ell(p_\ell n_\ell)\vf(\tilde p_\ell)=
\E_\ell(p_\ell^{-1}\tilde p_\ell ,n_\ell)\,
\vf\big( \tilde\beta_{n_{\ell}^{-1}}(p_\ell^{-1}\tilde p_\ell)\big).
$$
Equivalently, in terms of the representation $U_{\tilde\beta}$ of $N_\ell$ given  in Definition \ref{URPN},  we have:
\begin{equation}
\label{GPI}
\pi_\ell(p_\ell n_\ell)=\lambda_{p_\ell}\,\E_\ell(\cdot ,n_\ell)\,U_{\tilde\beta}(n_\ell).
\end{equation}
\end{lemma}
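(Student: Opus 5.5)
We realize $\Ind_{N_\ell}^{G_\ell}(\chi_\ell)$ in its standard model and transport it to $L^2(P_\ell)$ by restricting functions to $P_\ell$. Concretely, the induced space consists of measurable $\xi\colon G_\ell\to\mathbb C$ with $\xi(gn)=\overline{\chi_\ell}(n)\,\xi(g)$ for $n\in N_\ell$, square-integrable over $G_\ell/N_\ell$. The representation acts by $(\pi_\ell(g)\xi)(x)=\xi(g^{-1}x)$. No Radon--Nikodym factor appears, because $\Delta_{G_\ell}|_{N_\ell}=1=\Delta_{N_\ell}$ by Corollary~\ref{C1L1} and Proposition~\ref{NUM}, so $G_\ell/N_\ell$ carries a $G_\ell$-invariant measure. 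The proof of Corollary~\ref{C1P1} shows that under $P_\ell\to G_\ell/N_\ell$ this measure pulls back to the Haar measure of $P_\ell$. Hence $\xi\mapsto\vf:=\xi|_{P_\ell}$ is unitary onto $L^2(P_\ell)$. Its inverse is $\xi(pn)=\overline{\chi_\ell}(n)\vf(p)$, defined almost everywhere since $P_\ell N_\ell$ is conull.

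The main step is to decompose $g^{-1}\tilde p_\ell=n_\ell^{-1}p_\ell^{-1}\tilde p_\ell$ in the form $P_\ell\cdot N_\ell$. Set $x:=p_\ell^{-1}\tilde p_\ell$ and invert \eqref{MA}. From $x\,n^{-1}=\alpha_x(n)^{-1}\beta_n(x)$, substituting $x\to x^{-1}$ and taking inverses, we get
$$n\,x=\tilde\beta_n(x)\,\tilde\alpha_{x^{-1}}(n).$$
This is exactly the relation used in the proof of Proposition~\ref{NUM}, namely $n_{\ell-1}q_\ell=\tilde\beta_{n_{\ell-1}}(q_\ell)\alpha_{q_\ell^{-1}}(n_{\ell-1})$, after checking the conventions carefully; the tilde/non-tilde bookkeeping here is the only delicate point. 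Applying it with $n=n_\ell^{-1}$ gives
$$n_\ell^{-1}x=\tilde\beta_{n_\ell^{-1}}(x)\,\tilde\alpha_{x^{-1}}(n_\ell^{-1}).$$

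It follows that
$$(\pi_\ell(p_\ell n_\ell)\vf)(\tilde p_\ell)=\overline{\chi_\ell}\big(\tilde\alpha_{x^{-1}}(n_\ell^{-1})\big)\,\vf\big(\tilde\beta_{n_\ell^{-1}}(x)\big).$$
It remains to identify $\overline{\chi_\ell}\big(\tilde\alpha_{x^{-1}}(n_\ell^{-1})\big)$ with $\E_\ell(x,n_\ell)=\chi_\ell\big(\tilde\alpha_{x^{-1}}(n_\ell)\big)$. For this we use the cocycle identity \eqref{CP}. Since $\alpha_y(e)=e$, it gives $\alpha_y(n)^{-1}=\alpha_{\beta_n(y)}(n^{-1})$, and consequently $\tilde\alpha_{x^{-1}}(n_\ell^{-1})^{-1}=\alpha_{x^{-1}}(n_\ell)$. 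Thus the phase is $\chi_\ell(\alpha_{x^{-1}}(n_\ell))$, whereas $\E_\ell(x,n_\ell)=\overline{\chi_\ell}(\alpha_{x^{-1}}(n_\ell^{-1}))$. These two expressions agree by the same identity, $\alpha_{x^{-1}}(n_\ell^{-1})=\alpha_{\beta_{n_\ell}(x^{-1})}(n_\ell)^{-1}$, combined with the corollary to Lemma~\ref{SE}. Alternatively, one can use the second identity of Lemma~\ref{SE} with $\tilde n_\ell=n_\ell$, $n_\ell=e$, which yields $\E_\ell(x,n^{-1})\,\E_\ell(\tilde\beta_n(x),e)=1$.

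I expect this matching of the phase, that is, getting the arguments of $\alpha$ versus $\beta$ exactly right (and possibly adjusting via the corollary after Lemma~\ref{SE}), to be the main obstacle. Everything else is formal. Finally, \eqref{GPI} is just a rewriting: $\lambda_{p_\ell}$ replaces $\tilde p_\ell$ by $p_\ell^{-1}\tilde p_\ell$, the multiplication operator $\E_\ell(\cdot,n_\ell)$ contributes the phase, and $U_{\tilde\beta}(n_\ell)$ gives $\vf(\tilde\beta_{n_\ell^{-1}}(\cdot))$.
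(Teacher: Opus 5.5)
Your overall strategy is the paper's: the standard model of $\Ind_{N_\ell}^{G_\ell}(\chi_\ell)$, identified with $L^2(P_\ell)$ through Corollary~\ref{C1P1}, followed by the $P_\ell N_\ell$-decomposition of $(p_\ell n_\ell)^{-1}\tilde p_\ell$. The gap is in that decomposition. Inverting \eqref{MA} (replace $p$ by $x^{-1}$ and take inverses) gives
\[
n\,x=\tilde\beta_n(x)\,\alpha_{x^{-1}}(n),
\]
with an \emph{untilded} $\alpha$. This is exactly the relation you quote from the proof of Proposition~\ref{NUM}. It is not $\tilde\beta_n(x)\,\tilde\alpha_{x^{-1}}(n)$. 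Since $\alpha_y$ is not a homomorphism, your $N$-component $\tilde\alpha_{x^{-1}}(n_\ell^{-1})=\alpha_{x^{-1}}(n_\ell)^{-1}$ differs in general from the true one, $\alpha_{x^{-1}}(n_\ell^{-1})$. Here is a concrete instance in $\mathrm{GL}_2(\R)\ltimes\R^2$. Take $x=\begin{pmatrix}a&b\\0&z\end{pmatrix}\in P$ and $n=\begin{pmatrix}1&0\\c&1\end{pmatrix}\in V_1$. Then $n^{-1}x=\begin{pmatrix}a&b\\-ac&z-bc\end{pmatrix}$, whose $N$-component has lower-left entry $-ac/(z-bc)$. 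Your $\tilde\alpha_{x^{-1}}(n^{-1})$ instead has entry $-ac/(z+bc)$.

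The ``matching'' step therefore asserts a false identity, and it cannot rescue the argument. Your phase $\chi_\ell(\alpha_{x^{-1}}(n_\ell))$ equals $\overline{\E_\ell}(x,n_\ell^{-1})$. By the corollary to Lemma~\ref{SE}, this is $\E_\ell(\tilde\beta_{n_\ell}(x),n_\ell)$: the right kernel at the wrong base point. Both \eqref{CP} and that corollary only relate values at different base points, so neither can turn this into $\E_\ell(x,n_\ell)$. (Incidentally, \eqref{CP} gives $\alpha_y(n^{-1})=\alpha_{\beta_{n^{-1}}(y)}(n)^{-1}$; you wrote $\beta_n$.) In the example, with $\chi_\ell(c)=e^{i\kappa c}$ on $V_1$ and $\kappa\neq0$, your phase is $e^{i\kappa ac/(z+bc)}$, while $\E_\ell(x,n)=e^{i\kappa ac/(z-bc)}$. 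Your alternative use of Lemma~\ref{SE} with $n_\ell=e$ only yields the triviality $\E_\ell(\tilde\beta_{n_\ell}(x),e)=1$.

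The repair is simply to use the correct relation. With $n=n_\ell^{-1}$ it gives
\[
n_\ell^{-1}x=\tilde\beta_{n_\ell^{-1}}(x)\,\alpha_{x^{-1}}(n_\ell^{-1})=\tilde\beta_{n_\ell^{-1}}(x)\,\tilde\alpha_{x^{-1}}(n_\ell)^{-1},
\]
which is precisely the formula the paper's proof rests on. The phase is then
\[
\overline{\chi_\ell}\big(\tilde\alpha_{x^{-1}}(n_\ell)^{-1}\big)=\chi_\ell\big(\tilde\alpha_{x^{-1}}(n_\ell)\big)=\E_\ell(x,n_\ell)
\]
directly by definition, with no matching step at all. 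The remaining parts of your argument are fine: the induced model, the absence of a Radon--Nikodym factor, and the rewriting as \eqref{GPI}.
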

\begin{proof}
This is an immediate consequence of Corollary \ref{C1P1} and of the formula
$$
(p_\ell n_\ell)^{-1}\tilde p_\ell=\tilde\beta_{n_{\ell}^{-1}}(p_\ell^{-1}\tilde p_\ell)\,\tilde \alpha_{\tilde p_\ell^{-1}p_\ell}(n_\ell)^{-1},
$$
where $p_\ell,\tilde p_\ell\in P_\ell$ and $n_\ell\in N_\ell$.
\end{proof}

\begin{proposition}
The representations $\pi_\ell$ and  $\tilde\pi_\ell$ are unitarily equivalent.
\end{proposition}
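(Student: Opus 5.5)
We argue by induction on $\ell$, using induction in stages. The aim is to show that $\pi_\ell=\Ind_{N_\ell}^{G_\ell}(\chi_\ell)$ is equivalent to $\Ind_{G_{\ell-1}\ltimes V_\ell}^{G_\ell}(\pi_{\ell-1}\otimes\xi_{0,\ell})$. With the induction hypothesis $\pi_{\ell-1}\simeq\tilde\pi_{\ell-1}$ and the fact that induction preserves unitary equivalence, this gives $\pi_\ell\simeq\tilde\pi_\ell$.

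For $\ell=1$ we have $G_0$ trivial, $N_1=V_1$ and $\chi_1=\xi_{0,1}$. The two representations then coincide by definition: $\tilde\pi_1=\Ind_{V_1}^{G_1}(\xi_{0,1})=\pi_1$.

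For the inductive step, note that $N_\ell=N_{\ell-1}\ltimes V_\ell\subset G_{\ell-1}\ltimes V_\ell=:K_\ell$, with $V_\ell$ normal in $G_\ell$. Induction in stages gives
$$
\Ind_{N_\ell}^{G_\ell}(\chi_\ell)\simeq \Ind_{K_\ell}^{G_\ell}\big(\Ind_{N_\ell}^{K_\ell}(\chi_\ell)\big).
$$
It remains to identify $\Ind_{N_\ell}^{K_\ell}(\chi_\ell)$ with $\pi_{\ell-1}\otimes\xi_{0,\ell}$.

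Write $K_\ell=G_{\ell-1}\ltimes V_\ell$ and $N_\ell=N_{\ell-1}\ltimes V_\ell$. Then $K_\ell/N_\ell\cong G_{\ell-1}/N_{\ell-1}$, since $V_\ell$ is normal and contained in $N_\ell$. Also $\chi_\ell(v_\ell n_{\ell-1})=e^{i\langle\xi_{0,\ell},v_\ell\rangle}\chi_{\ell-1}(n_{\ell-1})$, and $\xi_{0,\ell}$ is $G_{\ell-1}$-invariant. Hence $\chi_\ell=\chi_{\ell-1}\otimes\xi_{0,\ell}$ in the notation of \eqref{MR}. Realize both inductions on functions on $K_\ell$ covariant under $N_\ell$. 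For $f$ in the induced space and $v\in V_\ell$,
$$
f(v^{-1}k)=f\big(k\,{\bf C}_{k^{-1}}(v^{-1})\big)=e^{-i\langle \xi_{0,\ell},{\bf C}_{k^{-1}}(v)\rangle}f(k)=e^{-i\langle\xi_{0,\ell},v\rangle}f(k).
$$
The last equality uses that $k^{-1}$ may be taken in $G_{\ell-1}$ modulo $V_\ell$, which stabilizes $\xi_{0,\ell}$. So $V_\ell$ acts by the scalar character $\xi_{0,\ell}$. The restriction of the functions to $G_{\ell-1}$ identifies the induced space, unitarily, with the space of $\Ind_{N_{\ell-1}}^{G_{\ell-1}}(\chi_{\ell-1})=\pi_{\ell-1}$, and it intertwines the $G_{\ell-1}$-actions. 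This is the standard fact
$$
\Ind_{N_{\ell-1}\ltimes V_\ell}^{G_{\ell-1}\ltimes V_\ell}(\chi_{\ell-1}\otimes\xi_{0,\ell})\simeq \Ind_{N_{\ell-1}}^{G_{\ell-1}}(\chi_{\ell-1})\otimes\xi_{0,\ell}.
$$

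The main obstacle is measure-theoretic. One needs quasi-invariant measures and Radon–Nikodym factors in the induced representations that are compatible with this identification, and the induction-in-stages theorem for general locally compact second countable groups must be applicable. Both points are standard (Mackey; see \cite{Folland}). Moreover, $\Delta_{G_\ell}|_{G_{\ell-1}}=\Delta_{G_{\ell-1}}$ (Lemma \ref{L1}) and $\Delta_{G_\ell}|_{N_\ell}=\Delta_{N_\ell}=1$ (Corollary \ref{C1L1}, Proposition \ref{NUM}). The modular function of $K_\ell$ restricted to $G_{\ell-1}$ agrees with $\Delta_{G_{\ell-1}}$ by the same argument as in Lemma \ref{L1}. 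So all homogeneous spaces involved carry invariant measures and no cocycle factors appear, and the identifications above are unitary.
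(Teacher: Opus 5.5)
Your argument is correct, and it takes a genuinely different, more conceptual route than the paper.

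\textbf{How the paper proceeds.} It works entirely with concrete realizations. It realizes $\tilde\pi_\ell$ on $L^2(Q_\ell,\tilde\CH_{\ell-1})$ using $G_\ell/(G_{\ell-1}\ltimes V_\ell)\cong Q_\ell$, and derives the explicit formulas \eqref{tildepiP} and \eqref{tildepiN} from the dressing actions. It then checks by hand, using Lemma \ref{RecE}, Corollary \ref{C2L1}, Lemma \ref{IDMF} and Remark \ref{InvM}, that the explicit unitary $U\colon L^2(P_\ell)\to L^2(Q_\ell)\otimes L^2(P_{\ell-1})$ intertwines $\pi_\ell$, in its $L^2(P_\ell)$-realization \eqref{GPI}, with $\tilde\pi_\ell$.

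\textbf{How you proceed.} You use induction in stages through $K_\ell=G_{\ell-1}\ltimes V_\ell$, together with two functorial facts:
\begin{enumerate}
\item induction commutes with inflation along the normal subgroup $V_\ell\subset N_\ell$;
\item induction commutes with twisting by the character $v g\mapsto e^{i\langle\xi_{0,\ell},v\rangle}$ of $K_\ell$, which is a character precisely because $G_{\ell-1}$ fixes $\xi_{0,\ell}$.
\end{enumerate}
This needs only $N_{\ell-1}\subset G_{\ell-1}$, the relation $\chi_\ell(v_\ell n_{\ell-1})=e^{i\langle\xi_{0,\ell},v_\ell\rangle}\chi_{\ell-1}(n_{\ell-1})$, and the invariance of $\xi_{0,\ell}$. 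The matched pair $(Q_\ell,G_{\ell-1})$ and all the dressing-action identities play no role.

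\textbf{What each route buys.} The paper's computation gives an explicit intertwiner adapted to the $L^2(P_\ell)$-picture on which $\Op_\ell$ is built, and explicit formulas for the Mackey representation. Neither is needed for the statement itself, nor for Corollary \ref{DMC}, which uses only the equivalence.

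\textbf{A false claim in your last paragraph.} Nothing depends on it, but you should remove it. For $g\in G_{\ell-1}$ one has $\Delta_{K_\ell}(g)=\Delta_{G_{\ell-1}}(g)\,|g|_{V_\ell}^{-1}$, and $|\cdot|_{V_\ell}$ is not trivial on $G_{\ell-1}$. For example, in $\Aff(\K^2)$ with $g=\begin{pmatrix}1&0\\m&z\end{pmatrix}$ one gets $|g|_{V_2}=|z|$.
\begin{itemize}
\item The argument of Lemma \ref{L1} does not transfer. There, the modulus on $V_\ell$ is compensated by the inverse modulus on the $\hat V_\ell$ factor, which is absent here.
\item Hence $G_\ell/K_\ell\cong H_\ell/G_{\ell-1}$ (measure-theoretically $\hat V_\ell$, or $Q_\ell$) in general carries no $G_\ell$-invariant measure. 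A Radon--Nikodym factor does appear in $\tilde\pi_\ell$: it is the factor $|p_{\ell-1}|_{Q_\ell}^{-1/2}$ in \eqref{tildepiP}.
\item This is harmless, because induction in stages holds with quasi-invariant measures.
\item Where you actually use an invariant measure, namely $K_\ell/N_\ell\cong G_{\ell-1}/N_{\ell-1}$, things are fine. Indeed $\Delta_{K_\ell}(v_\ell n_{\ell-1})=\Delta_{G_{\ell-1}}(n_{\ell-1})\,|n_{\ell-1}|_{V_\ell}^{-1}=1$, by Corollary \ref{C1L1} at level $\ell-1$ and the proof of Proposition \ref{NUM}.
\end{itemize}

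\textbf{A sign slip.} With your covariance convention $f(kn)=\chi_\ell(n)f(k)$, your computation actually shows that $V_\ell$ acts by $\overline{\xi_{0,\ell}}$. To get $\xi_{0,\ell}$, use $f(kn)=\chi_\ell(n)^{-1}f(k)$.
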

\begin{proof}
We realize $\pi_\ell$ on $\CH_\ell:=L^2(P_\ell)$ as in the previous lemma, and we realize $\tilde \pi_\ell$ on the Hilbert space $\tilde\CH_\ell$ inductively defined by
$\tilde\CH_1:=L^2(Q_1)$ and $\tilde\CH_\ell:=L^2(Q_\ell, \tilde\CH_{\ell-1})$. Starting from  the identities
$$
p_\ell^{-1}\,\tilde q_\ell={\bf C}_{p_{\ell-1}^{-1}}(q_\ell^{-1}\tilde q_\ell )\,p_{\ell-1}^{-1},
$$
and
$$
n_\ell^{-1}\,\tilde q_\ell=n_{\ell-1}^{-1}\,\tilde q_\ell\,{\bf C}_{\tilde q_\ell^{-1}}(v_\ell)^{-1}=\tilde\beta_{n_{\ell-1}^{-1}}(\tilde q_\ell )\,\tilde\alpha_{\tilde q_\ell^{-1}}(n_{\ell-1})^{-1}
\,{\bf C}_{\tilde q_\ell^{-1}}(v_\ell)^{-1},
$$
we get for $\vf\in \tilde\CH_\ell$:
\begin{align}
\label{tildepiP}
\big(\tilde\pi_\ell(p_\ell)\vf\big)(\tilde q_\ell)= |p_{\ell-1}|_{Q_\ell}^{-1/2}\,\tilde\pi_{\ell-1}(p_{\ell-1})\big(\vf\big({\bf C}_{p_{\ell-1}^{-1}}(q_\ell^{-1}\tilde q_\ell )\big)\big),
\end{align}
and, using Corollary \ref{C2L1}, we also get
\begin{align}
\label{tildepiN}
\big(\tilde\pi_\ell(n_\ell)\vf\big)(\tilde q_\ell)=  e^{i \langle\phi_{0,\ell}(\tilde q_\ell),v_\ell\rangle} \,
\,\tilde\pi_{\ell-1}\big(\tilde\alpha_{\tilde q_\ell ^{-1}}(n_{\ell-1})\big)\big(\vf\big(\tilde\beta_{n_{\ell-1}^{-1}}(\tilde q_\ell )\big)\big).
\end{align}

We will now show that $\pi_\ell$ and $\tilde\pi_\ell$ are unitarily equivalent by induction on $\ell$. For $\ell=1$ we clearly have $\tilde\CH_1=\CH_1$ and  $\pi_1=\tilde\pi_1$. So, assume that $\pi_{\ell-1}$ and $\tilde\pi_{\ell-1}$ are unitarily equivalent. In order to simplify the notation we then identify $\tilde\CH_{\ell-1}$ with $L^2(P_{\ell-1})$ in such a way that $\tilde\pi_{\ell-1}=\pi_{\ell-1}$. Let $U\colon L^2(P_\ell)\to L^2(Q_\ell)\otimes L^2(P_{\ell-1})$ be the unitary operator given by
$$
(Uf)(q_\ell,p_{\ell-1}):=|p_{\ell-1}|_{Q_\ell}^{-1/2}f(q_\ell,p_{\ell-1}).
$$
Take  $\vf\in L^2(Q_\ell)$ and   $\vf'\in L^2(P_{\ell-1})$. Then we have for $p_\ell\in P_\ell$ that
$$
\big(U\pi_\ell(p_\ell)U^*(\vf\otimes \vf')\big)(\tilde q_\ell,\tilde p_{\ell-1})=|p_{\ell-1}|_{Q_\ell}^{-1/2}\,\vf\big({\bf C}_{p_{\ell-1}^{-1}}(q_\ell^{-1}\tilde q_\ell )\big)\,\vf'(p_{\ell-1}^{-1}\tilde p_{\ell-1}),
$$
which under the identifications $ L^2(Q_\ell)\otimes L^2(P_{\ell-1})= L^2\big(Q_\ell, L^2(P_{\ell-1})\big)$ and $\tilde\pi_{\ell-1}=\pi_{\ell-1}$ is just
the expression in \eqref{tildepiP}.
Next, we have for $n_\ell\in N_\ell$:
$$
\big(\pi_\ell(n_\ell)U^*(\vf\otimes \vf')\big)(\tilde p_\ell)
=
\E_\ell(\tilde p_\ell ,n_\ell)
\big(U^*(\vf\otimes \vf')\big)\big( \tilde\beta_{n_{\ell}^{-1}}(\tilde p_\ell)\big).
$$
Noting that
\begin{multline*}
 \tilde\beta_{n_{\ell}^{-1}}(\tilde p_\ell)=\tilde\beta_{n_{\ell-1}^{-1}}(\tilde p_\ell)=
  \beta_{n_{\ell-1}^{-1}}(\tilde p_{\ell-1}^{-1}\tilde q_\ell^{-1})^{-1}\\=
  \beta_{n_{\ell-1}^{-1}}(\tilde q_\ell^{-1})^{-1} \,\beta_{\tilde\alpha_{\tilde q_\ell^{-1}}(n_{\ell-1})^{-1}}(\tilde p_{\ell-1}^{-1})^{-1}
  =  \tilde\beta_{n_{\ell-1}^{-1}}(\tilde q_\ell)\,\tilde \beta_{\tilde\alpha_{\tilde q_\ell^{-1}}(n_{\ell-1})^{-1}}(\tilde p_{\ell-1}),
 \end{multline*}
and using  Lemma \ref{RecE}, we get:
\begin{multline*}
\big(U\pi_\ell(n_\ell)U^*(\vf\otimes \vf')\big)(\tilde p_\ell,\tilde q_{\ell-1})\\
=
e^{i\langle \phi_\ell(\tilde q_\ell),v_\ell\rangle}\,\E_{\ell-1}\big(\tilde p_{\ell-1} ,\tilde\alpha_{\tilde q_\ell^{-1}}(n_{\ell-1})\big)
\vf\big( \tilde\beta_{n_{\ell-1}^{-1}}(\tilde q_\ell)\big)\,
\vf'\big(\tilde \beta_{\tilde\alpha_{\tilde q_\ell^{-1}}(n_{\ell-1})^{-1}}(\tilde p_{\ell-1})\big)\\
=
e^{i\langle \phi_\ell(\tilde q_\ell),v_\ell\rangle}\,\vf\big( \tilde\beta_{n_{\ell-1}^{-1}}(\tilde q_\ell)\big)\,
\pi_{\ell-1}\big(\tilde\alpha_{\tilde q_\ell^{-1}}(n_{\ell-1})\big)\vf'(\tilde p_{n-1}),
\end{multline*}
where we used the invariance property $|\beta_{n_{\ell-1}}(p_{\ell-1})|_{Q_\ell}=|p_{\ell-1}|_{Q_\ell}$, which follows from Lemma \ref{IDMF} and Remark~\ref{InvM}.
Under the same identifications as before, this is just the expression in~\eqref{tildepiN}. Therefore $U\pi_\ell(\cdot)U=\tilde\pi_\ell$.
\end{proof}

\begin{corollary}
\label{DMC}
The unitary representation $\pi_\ell$ is irreducible and square-integrable. Moreover, the Duflo--Moore operator $D_\ell$ is the densely defined operator on $L^2(P_\ell)$ given by multiplication
by the function $\Delta_{G_\ell}\big|_{P_\ell}^{-1}$.
\end{corollary}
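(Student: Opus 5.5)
Irreducibility and square-integrability come from the preceding proposition: $\pi_\ell\simeq\tilde\pi_\ell$, and $\tilde\pi_\ell$ is the Mackey representation which, by Lemma~\ref{lem:typeI} and the criterion of \cite{ACVL}, is (up to equivalence) the unique square-integrable irreducible representation of $G_\ell$. The real work is to identify the Duflo--Moore operator in the realization on $L^2(P_\ell)$. Recall that $D_\ell$ is the unique positive, self-adjoint, semi-invariant operator with $\pi_\ell(g)D_\ell\pi_\ell(g)^*=\Delta_{G_\ell}(g)^{-1}D_\ell$. It is characterized by the orthogonality relations $\int_G|\langle\pi(g)\xi,\eta\rangle|^2dg=\|D^{-1/2}\xi\|^2\|\eta\|^2$. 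My plan is to check semi-invariance of $M:=$ multiplication by $\Delta_{G_\ell}|_{P_\ell}^{-1}$, which pins $D_\ell$ down up to a positive scalar, and then to fix the scalar by a direct orthogonality computation.

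\textbf{Semi-invariance.} Using \eqref{GPI}, it suffices to treat $p_\ell\in P_\ell$ and $n_\ell\in N_\ell$ separately. The operator $\lambda_{p_\ell}$ conjugates multiplication by $F$ into multiplication by $F(p_\ell^{-1}\,\cdot)$. Since $\Delta_{G_\ell}$ is a homomorphism, this gives $\lambda_{p}M\lambda_p^*=\Delta_{G_\ell}(p)^{-1}M$. For $n_\ell$, the factor $\E_\ell(\cdot,n_\ell)$ commutes with $M$. Moreover $U_{\tilde\beta}(n_\ell)$ conjugates $M$ into multiplication by $\Delta_{G_\ell}(\tilde\beta_{n_\ell^{-1}}(\cdot))^{-1}$, which equals $M$ by Corollary~\ref{C1L1}. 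This is consistent with $\Delta_{G_\ell}(n_\ell)=1$. A uniqueness theorem for semi-invariant operators of an irreducible square-integrable representation then gives $D_\ell=cM$ for some $c>0$; one can invoke either the Duflo--Moore theorem or a Schur-type argument on the affiliated operator $D_\ell M^{-1}$, which commutes with $\pi_\ell(G_\ell)$.

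\textbf{Normalization.} To see $c=1$, I would compute $\int_{G_\ell}|\langle\pi_\ell(g)\vf,\psi\rangle|^2dg$ for nice $\vf,\psi\in C_c(P_\ell)$, using Corollary~\ref{C2P1} to write $dg=dp\,dn$. The key observation is that the matrix coefficient is
\[
\langle\pi_\ell(pn)\vf,\psi\rangle=\int_{P_\ell}\E_\ell(p^{-1}\tilde p,n)\,\vf\big(\tilde\beta_{n^{-1}}(p^{-1}\tilde p)\big)\overline{\psi(\tilde p)}\,d\tilde p .
\]
After the substitution $\tilde p\mapsto p\tilde p$, this is a Fourier-type integral in the $N_\ell$-variable. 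I would then apply the unitarity of the Fourier transform $\F_\ell$ of Definition~\ref{UFT}, together with the $\tilde\beta$-invariance of the Haar measure on $P_\ell$ (Corollary~\ref{C1P1}), to perform the $dn$ integration as a Plancherel identity. The expected result is $\|\psi\|^2\int_{P_\ell}|\vf(p)|^2\Delta_{G_\ell}(p)\,dp=\|\psi\|^2\,\|M^{-1/2}\vf\|^2$. The appearance of the modular factor should come from the change of variables $p\mapsto p^{-1}$ in $P_\ell$, combined with the Jacobian $J_{\tilde\alpha_\ell}$, which contains $\Delta_{G_\ell}^{-1}(p)$.

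\textbf{Main obstacle.} This Plancherel step is the hardest part, because $\F_\ell$ is only defined inductively and its kernel is merely formal (Remark~\ref{FFD}). I would therefore do the computation inductively in $\ell$, mirroring the tensor decomposition $U$ from the previous proof. The first-level computation on $Q_\ell\times V_\ell$ is an ordinary abelian Plancherel via $\phi_\ell$; it produces the factor $|q_\ell|_{V_\ell}$, which should equal $\Delta_{G_\ell}(q_\ell)$ up to $\Delta_{Q_\ell}$. The induction hypothesis then supplies $\Delta_{G_{\ell-1}}(p_{\ell-1})$. These two factors combine into $\Delta_{G_\ell}(p_\ell)$ via the factorization in Corollary~\ref{C1L1} and Lemma~\ref{IDMF}.
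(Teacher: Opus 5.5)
Your strategy (check that $M$ is semi-invariant, then invoke Duflo--Moore uniqueness) is the paper's. However, the one nontrivial computation in it is wrong, and the error is hidden by the convention you adopt. As you say, $\lambda_p$ conjugates multiplication by $F$ into multiplication by $F(p^{-1}\cdot)$. For $F=\Delta_{G_\ell}|_{P_\ell}^{-1}$ this gives $F(p^{-1}\tilde p)=\Delta_{G_\ell}(p)\,\Delta_{G_\ell}(\tilde p)^{-1}$, hence $\lambda_pM\lambda_p^*=\Delta_{G_\ell}(p)\,M$, not $\Delta_{G_\ell}(p)^{-1}M$. Combined with your (correct) $N_\ell$-computation, $M$ is semi-invariant of weight $\Delta_{G_\ell}$.

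With the convention you recall, $\pi(g)D\pi(g)^*=\Delta(g)^{-1}D$ and $\int_G|\langle\pi(g)\xi,\eta\rangle|^2dg=\|D^{-1/2}\xi\|^2\|\eta\|^2$, a correct computation would therefore identify the Duflo--Moore operator as multiplication by $\Delta_{G_\ell}|_{P_\ell}$, the inverse of what is claimed. The statement holds in the paper's convention, which is explicitly the opposite one: $\pi_\ell(g)D_\ell\pi_\ell(g)^*=\Delta_{G_\ell}(g)D_\ell$. In other words, $D_\ell$ is the inverse of the usual formal degree operator; this is the normalization that makes the Galois map built from $D^{-1/2}$ unitary. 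In that convention, your semi-invariance check with the exponent corrected is exactly the paper's proof.

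The same inversion affects your normalization step, whose target is wrong. For $\ell=1$ the computation you sketch can be carried out completely. After $\tilde q\mapsto q\tilde q$ and the substitution $\xi=\phi(\tilde q)$ (the pullback of $d\xi$ is $|\tilde q|_{V}^{-1}d\tilde q$), Plancherel in $v$ gives
\[
\int_V|\langle\pi(qv)\vf,\psi\rangle|^2\,dv=\int_Q|\vf(\tilde q)|^2\,|\psi(q\tilde q)|^2\,|\tilde q|_V\,d\tilde q .
\]
Integrating over $q$ then yields $\|\psi\|^2\int_Q|\vf(\tilde q)|^2\,|\tilde q|_V\,\Delta_Q(\tilde q)^{-1}\,d\tilde q=\|\psi\|^2\int_Q|\vf(\tilde q)|^2\,\Delta_G(\tilde q)^{-1}\,d\tilde q$, because $\Delta_G(q)=\Delta_Q(q)/|q|_V$. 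So the weight is $\Delta_{G_\ell}^{-1}$, not $\Delta_{G_\ell}$, and the correct target is $\|D_\ell^{1/2}\vf\|^2\|\psi\|^2$ in the paper's convention.

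Fixing the scalar is a legitimate addition: the paper only appeals to uniqueness, which holds up to a positive factor. But your inductive Plancherel argument for general $\ell$ is still only a sketch, and it has to be aimed at this corrected identity.
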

\begin{proof}
Irreducibility and square integrability of $\pi_\ell$ follow from the unitary equivalence with the  Mackey representation $\tilde\pi_\ell$.
The Duflo--Moore operator $D_\ell$ of the square-integrable representation~$\pi_\ell$
is characterized as  the unique semi-invariant operator of weight $\Delta_{G_\ell}$, see \cite{DM}*{Theorem 3}, but note that we use the opposite conventions:
$$
\pi_\ell(g_\ell)\,D_\ell\,\pi_\ell(g_\ell)^*=\Delta_{G_\ell}(g_\ell)\,D_\ell.
$$
Thus it is enough to prove this identity for the operator given by multiplication by the function $\Delta_G\big|_{P_\ell}^{-1}$. For $\vf\in C_c(P_\ell)$, we get from
the expression \eqref{GPI} that
$$
\pi_\ell(g_\ell)\big(\Delta_{G_\ell}\big|_{P_\ell}^{-1}\vf\big)(\tilde p_\ell)=\Delta_{G_\ell}\big(\tilde\beta_{n_{\ell}^{-1}}(p_\ell^{-1}\tilde p_\ell)\big)^{-1}\pi_\ell(g_\ell)\vf(\tilde p_\ell),
$$
which concludes the proof, since by Lemma \ref{C1L1} the function $\Delta_{G_\ell}\big|_{P_\ell}$ is $\tilde\beta$-invariant.
\end{proof}

\subsection{The quantization map}
We are  ready to introduce the Kohn--Nirenberg type quantization $\Op_\ell$ of $G_\ell$.
To motivate the construction, we start with formal considerations.
Consider  the    Radon measures on $P_\ell$ defined for $\vf\in C_c(P_\ell)$ by
$$
 T_1(\vf):=\vf(e)
 \quad\mbox{and}\quad T_2(\vf):=\int_{P_\ell} \vf(p_\ell)\,
 J_{\tilde\alpha_\ell}(p_{\ell}^{-1},e)^{1/2}\,dp_\ell.
$$
For $f\in C_c(G_\ell)$,
consider the (formal) sesquilinear form on $C_c(P_\ell)$ defined by
\begin{align}
\label{KNMQ}
\widetilde\Op_\ell(f)[\vf_1,\vf_2]:=\int_{G_\ell} f(g_\ell)\,\overline{T_1( \pi_\ell(g_\ell)^*\vf_1)}  \,T_2(\pi_\ell(g_\ell)^*\vf_2)
\, dg_\ell.
\end{align}
Explicitly, note first that by  \eqref{GPI} we have:
$$
T_1( \pi_\ell(p_\ell n_\ell)^*\vf_1)=U_{\tilde\beta}(n_\ell^{-1})\,\overline{\E_\ell}(\cdot ,n_\ell)\,\lambda_{p_\ell^{-1}}\vf_1(e)
=\overline{\E_\ell}(e,n_\ell)\,\vf_1(p_\ell)=\overline{\chi_\ell}(n_\ell)\,\vf_1(p_\ell).
$$
Then, using Corollary \ref{C2L1} and the definition of $J_{\tilde\alpha_\ell}$ given in Remark \ref{IF1}, we get
\begin{multline*}
T_2(\pi_\ell(g_\ell)^*\vf_2)=\int_{P_\ell} \pi_\ell(p_\ell n_\ell)^*\vf(\tilde p_\ell)\,
 J_{\tilde\alpha_\ell}(\tilde p_{\ell}^{-1},e)^{1/2}\,dp_\ell\\
 =\int_{P_\ell} \vf_2(\tilde p_\ell) \,\overline{\E_\ell}(p_\ell^{-1}\tilde p_\ell,n_\ell)\,
 J_{\tilde\alpha_\ell}(\tilde p_{\ell}^{-1} p_\ell,n_\ell)^{1/2}\,d\tilde p_\ell.
\end{multline*}
Hence by Corollary \ref{C2P1}, we get
$$
\widetilde\Op_\ell(f)[\vf_1,\vf_2]=\int_{P_\ell\times P_\ell}\overline{\vf_1}( p_\ell) \, K_\ell(f)( p_\ell,\tilde p_\ell)\,\vf_2(\tilde p_\ell)
\,dp_\ell d\tilde p_\ell,
$$
where the operator  kernel is given by
$$
K_\ell(f)( p_\ell,\tilde p_\ell)=\int_{N_\ell} \,\chi_\ell(n_\ell)\,f(p_\ell n_\ell)\,\overline{\E_\ell}(p_\ell^{-1}\tilde p_\ell,n_\ell)\,
J_{\tilde\alpha_\ell}(\tilde p_{\ell}^{-1} p_\ell,n_\ell)^{1/2} \, dn_\ell.
$$
This formal expression and Remark \ref{FFD} justify the following definition, which should be viewed as a central result of this paper:
\begin{definition}
\label{UQM}
Consider the unitary operator
$$
K_\ell:=W_{P_\ell}\,(1\otimes \F_\ell\,\chi_\ell)\,T_\ell :L^2(G_\ell)\to L^2(P_\ell \times P_\ell),
$$
where $T_\ell\colon L^2(G_\ell)\to L^2(P_\ell\times N_\ell)$  is  given by $(T_\ell f)(p_\ell,n_\ell):=f(p_\ell n_\ell)$ and
 $W_{P_\ell}$ is the multiplicative unitary of $P_\ell$.
We define the quantization map as the unitary operator
$$
\Op_\ell\colon L^2(G_\ell)\to\HS(L^2(P_\ell))
$$
which maps a function $f\in L^2(G_\ell)$ to
the Hilbert--Schmidt operator on $L^2(P_\ell)$ with the  operator kernel $K_\ell(f)\in L^2(P_\ell\times P_\ell)$.
\end{definition}
Of course, the formal definition \eqref{KNMQ} implies that $\Op_\ell$  intertwines $\lambda$ with ${\rm Ad}\,\pi_\ell$. The following theorem gives a rigorous proof of this property.

\begin{theorem}
The Kohn--Nirenberg quantization map $\Op_\ell$ intertwines the regular representation $\lambda$ of $G_\ell$ with ${\rm Ad}\,\pi_\ell$. Equivalently, the unitary operator
$K_\ell\colon L^2(G_\ell)\to L^2(P_\ell)\otimes L^2(P_\ell)$
intertwines $\lambda$ with $\pi_\ell\otimes \pi_\ell^c$, where $(\pi_\ell^c(g)\varphi)(p_\ell):=\overline{(\pi_\ell(g)\overline\varphi)(p_\ell)}$.
\end{theorem}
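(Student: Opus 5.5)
Since $G_\ell=P_\ell N_\ell$ up to null sets and both $\lambda$ and $\pi_\ell\otimes\pi_\ell^c$ are representations, it suffices to check the intertwining relation
$$
K_\ell\,\lambda_g=(\pi_\ell(g)\otimes\pi_\ell^c(g))\,K_\ell
$$
separately for $g=p\in P_\ell$ and $g=n\in N_\ell$. Both subgroups are closed and together generate $G_\ell$: the set of $g$ for which the relation holds is a closed subgroup containing $P_\ell N_\ell$, which has full measure, hence it is all of $G_\ell$. By \eqref{GPI} we have $\pi_\ell(p)=\lambda_p$ and $\pi_\ell(n)=\E_\ell(\cdot,n)\,U_{\tilde\beta}(n)$. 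Consequently $\pi_\ell^c(p)=\lambda_p$ and $\pi_\ell^c(n)=\overline{\E_\ell}(\cdot,n)\,U_{\tilde\beta}(n)$. We then trace each factor of $K_\ell=W_{P_\ell}(1\otimes\F_\ell\chi_\ell)T_\ell$ through these relations.

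\emph{Case $g=p\in P_\ell$.} Here $T_\ell\lambda_p=(\lambda_p\otimes1)T_\ell$, since $(p^{-1}p')n=p^{-1}(p'n)$. The multiplicative unitary satisfies $W_{P_\ell}(\lambda_p\otimes1)=(\lambda_p\otimes\lambda_p)W_{P_\ell}$, because $(W f)(x,y)=f(x,x^{-1}y)$ and $(p^{-1}x)^{-1}p^{-1}y=x^{-1}y$. The middle factor $1\otimes\F_\ell\chi_\ell$ commutes with $\lambda_p\otimes1$. Combining these gives the relation with $\lambda_p\otimes\lambda_p=\pi_\ell(p)\otimes\pi^c_\ell(p)$.

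\emph{Case $g=n\in N_\ell$.} This is the substantive case. First compute $T_\ell\lambda_n$. Using $n^{-1}p'=\tilde\beta_{n^{-1}}(p')\,\tilde\alpha_{p'^{-1}}(n)^{-1}$, which is the formula from the proof of the lemma preceding \eqref{GPI}, we get
$$
(T_\ell\lambda_nf)(p',n')=(T_\ell f)\big(\tilde\beta_{n^{-1}}(p'),\,\tilde\alpha_{p'^{-1}}(n)^{-1}n'\big).
$$
Thus, fibrewise in $p'$, this is a dressing transformation in the first variable combined with a left translation $\lambda_{\tilde\alpha_{p'^{-1}}(n)}$ in the second. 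Next, multiplication by $\chi_\ell$ turns this left translation into $\chi_\ell(\tilde\alpha_{p'^{-1}}(n))\,\lambda_{\tilde\alpha_{p'^{-1}}(n)}\chi_\ell$, and $\chi_\ell(\tilde\alpha_{p'^{-1}}(n))=\E_\ell(p',n)$ by \eqref{FTK}. Applying Proposition~\ref{CRF} to the second leg,
$$
\F_\ell\lambda_m=\overline{\E_\ell}(\cdot,m)\,U_{\tilde\beta}(m)\,\F_\ell,\qquad m=\tilde\alpha_{p'^{-1}}(n).
$$
Finally we conjugate through $W_{P_\ell}$. This requires the cocycle-type identities: the relation $\tilde\beta_{m^{-1}}(p'^{-1}p'')$ versus $\tilde\beta_{n^{-1}}(p')^{-1}\tilde\beta_{n^{-1}}(p'')$, which follows from \eqref{CP}, and the factorization of $\E_\ell$ from Lemma~\ref{SE}, including its first identity $\E_\ell(\tilde p^{-1}p,n)=\E_\ell(p,\tilde\alpha_{\tilde p}(n))$. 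With these we rewrite the resulting phase $\E_\ell(p',n)\,\overline{\E_\ell}(p'^{-1}p'',m)$ as $\E_\ell(p',n)\,\overline{\E_\ell}(p'',n)$, and the pair of transformed arguments as $\big(\tilde\beta_{n^{-1}}(p'),\tilde\beta_{n^{-1}}(p'')\big)$. That is exactly the action of $\pi_\ell(n)\otimes\pi_\ell^c(n)$. The invariance of Haar measure on $P_\ell$ under $\tilde\beta$ (Corollary~\ref{C1P1}) guarantees that no Jacobian appears.

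All manipulations are to be done on a dense subspace (e.g.\ $T_\ell^{-1}$ of $C_c(P_\ell)\odot C_c(N_\ell)$) as almost-everywhere identities between unitaries, so no integrability of $J_{\tilde\alpha_\ell}^{1/2}$ is needed; this is precisely why the proof goes through Proposition~\ref{CRF} rather than the formal kernel. The main obstacle will be the bookkeeping in the $N_\ell$ case: the element $m=\tilde\alpha_{p'^{-1}}(n)$ depends on the first variable, so Proposition~\ref{CRF} must be applied fibrewise. One also has to verify carefully the identity $\tilde\beta_{\tilde\alpha_{p'^{-1}}(n)^{-1}}(p'^{-1}p'')=\tilde\beta_{n^{-1}}(p')^{-1}\tilde\beta_{n^{-1}}(p'')$ via \eqref{CP}, together with the matching phase identity from Lemma~\ref{SE}. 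The equivalence with the $\Op_\ell$ formulation is standard: $\pi\otimes\pi^c$ on kernels corresponds to $\Ad\pi$ on Hilbert--Schmidt operators.
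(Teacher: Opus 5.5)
Your proposal is correct and follows essentially the same route as the paper. You reduce to $P_\ell$ and $N_\ell$, and the $P_\ell$ case is identical. In the $N_\ell$ case you use the same ingredients: Proposition~\ref{CRF} applied fibrewise with $m=\tilde\alpha_{p^{-1}}(n)$, the $\tilde\beta$-identity derived from \eqref{CP}, and the first identity of Lemma~\ref{SE}. The only difference is direction: you compute forward from $K_\ell\lambda_n$, whereas the paper rewrites $(\pi_\ell(n)\otimes\pi_\ell^c(n))K_\ell$ and then pulls it back through $K_\ell^*$.
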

\begin{proof}
Due to the identities $\pi_\ell(p_\ell)=\pi_\ell^c(p_\ell)=\lambda_{p_\ell}$, we have
\begin{multline*}
(\pi_\ell(p_\ell)\otimes \pi_\ell^c(p_\ell))\, K_\ell=(\lambda_{p_\ell}\otimes \lambda_{p_\ell})W_{P_\ell}\,(1\otimes \F_\ell\,\chi_\ell)\,T_\ell
=W_{P_\ell}\,(\lambda_{p_\ell}\otimes1)\,(1\otimes \F_\ell\,\chi_\ell)\,T_\ell\\
=W_{P_\ell}\,(1\otimes \F_\ell\,\chi_\ell)\,(\lambda_{p_\ell}\otimes1)\,T_\ell
=W_{P_\ell}\,(1\otimes \F_\ell\,\chi_\ell)\,T_\ell\,\lambda_{p_\ell}=K_\ell\,\lambda_{p_\ell}.
\end{multline*}

Next, since $\pi_\ell(n_\ell)=\E_\ell(\cdot ,n_\ell)\,U_{\tilde\beta_\ell}(n_\ell)$, we get $\pi_\ell^c(n_\ell)=\overline{\E}_\ell(\cdot ,n_\ell)\,U_{\tilde\beta_\ell}(n_\ell)$, so that  for $f\in L^2(G_\ell)$
we obtain
\begin{multline*}
\big((\pi_\ell(n_\ell)\otimes \pi_\ell^c(n_\ell))\, K_\ell (f)\big)(p_\ell,\tilde p_\ell)
= \E_\ell(p_\ell,n_\ell)\, \overline\E_\ell(\tilde p_\ell,n_\ell)\, K_\ell(f)\big(\tilde\beta_{n_\ell^{-1}}(p_\ell),\tilde\beta_{n_\ell^{-1}}(\tilde p_\ell)\big)\\
= \E_\ell(p_\ell,n_\ell)\, \overline\E_\ell(\tilde p_\ell,n_\ell)\,\big((1\otimes \F_\ell\,\chi_\ell)\,T_\ell
f\big)\big(\tilde\beta_{n_\ell^{-1}}(p_\ell),\tilde\beta_{n_\ell^{-1}}(p_\ell)^{-1}\tilde\beta_{n_\ell^{-1}}(\tilde p_\ell)\big).
\end{multline*}
Observe that
$$
\tilde\beta_{n_\ell^{-1}}(p_\ell)^{-1}\tilde\beta_{n_\ell^{-1}}(\tilde p_\ell)=\tilde\beta_{\tilde\alpha_{p_\ell^{-1}}(n_\ell)^{-1}}(p_\ell^{-1}\tilde p_\ell).
$$
It follows that
\begin{multline*}
\big((\pi_\ell(n_\ell)\otimes \pi_\ell^c(n_\ell))\, K_\ell (f)\big)(p_\ell,\tilde p_\ell)\\
= \E_\ell(p_\ell,n_\ell)\, \overline\E_\ell(\tilde p_\ell,n_\ell)\,\big((1\otimes \F_\ell\,\chi_\ell)\,T_\ell
f\big)\big(\tilde\beta_{n_\ell^{-1}}(p_\ell),\tilde\beta_{\tilde\alpha_{p_\ell^{-1}}(n_\ell)^{-1}}(p_\ell^{-1}\tilde p_\ell)\big).
\end{multline*}
Looking at the second leg in the last expression, we are led to consider, for $p_\ell\in P_\ell$ fixed, the following unitary operator:
\begin{equation}
\label{UOP}
\E_\ell(p_\ell,n_\ell)\, \overline\E_\ell(\cdot,n_\ell)\,\lambda_{p_\ell}\,U_{\tilde\beta}\big(\tilde\alpha_{p_\ell^{-1}}(n_\ell)\big)\,\F_\ell\,\chi_\ell.
\end{equation}
By Proposition \ref{CRF} this operator coincides with
\begin{multline*}
\E_\ell(p_\ell,n_\ell)\, \overline\E_\ell(\cdot,n_\ell)\,\lambda_{p_\ell}\,\E\big(\cdot,\tilde\alpha_{p_\ell^{-1}}(n_\ell)\big)\,\F_\ell\,\lambda_{\tilde\alpha_{p_\ell^{-1}}(n_\ell)}\,\chi_\ell\\
=\E_\ell(p_\ell,n_\ell)\, \overline\E_\ell(\cdot,n_\ell)\,\E\big(p_\ell^{-1}\cdot,\tilde\alpha_{p_\ell^{-1}}(n_\ell)\big)\,\lambda_{p_\ell}\,\F_\ell\,
\lambda_{\tilde\alpha_{p_\ell^{-1}}(n_\ell)}\,\chi_\ell\\
=\E_\ell(p_\ell,n_\ell)\,\lambda_{p_\ell}\,\F_\ell\,
\lambda_{\tilde\alpha_{p_\ell^{-1}}(n_\ell)}\,\chi_\ell,
\end{multline*}
where the last equality follows from the first relation in  Lemma \ref{SE}. Using that
$$
\lambda_{\tilde\alpha_{p_\ell^{-1}}(n_\ell)}\,\chi_\ell=\chi_\ell\big(\tilde\alpha_{p_\ell^{-1}}(n_\ell)^{-1}\big)\,\chi_\ell\,\lambda_{\tilde\alpha_{p_\ell^{-1}}(n_\ell)}
=\overline\E_\ell(p_\ell,n_\ell)\,\chi_\ell\,\lambda_{\tilde\alpha_{p_\ell^{-1}}(n_\ell)},
$$
we see that the unitary operator \eqref{UOP} is equal to $\lambda_{p_\ell}\,\F_\ell\,\chi_\ell\,\lambda_{\tilde\alpha_{p_\ell^{-1}}(n_\ell)}$.
Hence we get
$$
\big((\pi_\ell(n_\ell)\otimes \pi_\ell^c(n_\ell))\, K_\ell (f)\big)(p_\ell,\tilde p_\ell)
=\big((1\otimes \lambda_{p_\ell}\,\F_\ell\,\chi_\ell\,\lambda_{\tilde\alpha_{p_\ell^{-1}}(n_\ell)})\,T_\ell
f\big)\big(\tilde\beta_{n_\ell^{-1}}(p_\ell),\tilde p_\ell\big).
$$
It follows that
$$
\big(K_\ell ^*\,(\pi_\ell(n_\ell)\otimes \pi_\ell^c(n_\ell))\, K_\ell(f)\big)(p_\ell\tilde n_\ell)=f\big(\tilde\beta_{n_\ell^{-1}}(p_\ell)\tilde\alpha_{p_\ell^{-1}}(n_\ell)^{-1}\tilde n_\ell\big)
=f(n_\ell^{-1}p_\ell\tilde n_\ell),
$$
which concludes the proof.
\end{proof}

\subsection{The dual cocycle}
By \cite{BGNT3} we know now that we have a dual unitary $2$-cocycle $\Omega_\ell$ on $G_\ell$ given by the formula
$$
\Omega_\ell:= (\CJ\otimes\CJ)\,\G^*\,  (1\otimes\CJ)\, \hat W_{G_\ell},
$$
 where $(\CJ f)(g_\ell)=\Delta_{G_\ell}(g_\ell)^{-1/2}\,f(g^{-1})$ and
  $\G\colon L^2(G_\ell)\otimes L^2(G_\ell)\to L^2(G_\ell)\otimes L^2(G_\ell)$ is the unitary Galois map. With  the Duflo--Moore operator $D_\ell$ of the representation
  $\pi_\ell$, this Galois map is given  by
$$
\big(\G(f_1\otimes f_2)\big)(g_\ell,\tilde g_\ell)=\Delta_{G_\ell}(g_\ell)^{-1/2}\Op_\ell^*\big(\Op_\ell(\lambda_{g_\ell}f_1)D_\ell^{-1/2}\Op_\ell(f_2)\big)(\tilde g_\ell).
$$
Let us convince ourselves, at least formally, that this dual $2$-cocycle can be written as follows:
\begin{equation}
\label{FF}
\Omega_\ell=\int_{P_\ell\times N_\ell}
\overline{\chi_\ell}(n_\ell)\,\E_\ell(p_\ell,n_\ell)\, J_{\tilde\alpha_\ell}( p_\ell^{-1},n_\ell)^{1/2}\,J_{\tilde\alpha_\ell}( p_\ell,e)^{1/2}
\,\lambda_{n_\ell^{-1}}\otimes \lambda_{p_\ell^{-1}}\, dp_\ell dn_\ell.
\end{equation}
Observe that if we let $\mathbb F_\ell(p_\ell,n_\ell):=J_{\tilde\alpha_\ell}( p_\ell^{-1},n_\ell)^{1/2}\,\E_\ell(p_\ell,n_\ell)$ be the total Fourier kernel
(see Remark \ref{FFD}), then we get
$$
\Omega_\ell=\int_{P_\ell\times N_\ell} \frac{\mathbb F_\ell(p_\ell,n_\ell)}{\mathbb F_\ell(e,n_\ell)\mathbb F_\ell(p_\ell,e)}\,\lambda_{n_\ell^{-1}}\otimes \lambda_{p_\ell^{-1}}\, dp_\ell dn_\ell.
$$
Note also that if we let $\Phi_\ell$ be the phase of this kernel, that is,
$$
\Phi_\ell(p_\ell,n_\ell):=\frac{\mathbb E_\ell(p_\ell,n_\ell)}{\mathbb E_\ell(e,n_\ell)},
$$
then this function satisfies the bi-$1$-cocycle   relations
\begin{equation*}
\Phi_\ell(p_\ell\tilde p_\ell,n_\ell)=\Phi_\ell(p_\ell,n_\ell)\,\Phi_\ell(\tilde p_\ell,\tilde\alpha_{p_\ell^{-1}}(n_\ell))\quad\mbox{and}\quad
\Phi_\ell(p_\ell, n_\ell\tilde n_\ell)=\Phi_\ell(p_\ell,n_\ell)\,\Phi_\ell(\tilde\beta_{n_\ell^{-1}}(p_\ell),\tilde n_\ell).
\end{equation*}

To obtain formula \eqref{FF}, we let $F$  be the  pseudo-measure on $G_\ell\times G_\ell$ such that
$$
\Omega_\ell^*=\int_{G_\ell\times G_\ell} F(g_\ell,\tilde g_\ell)\,\lambda_{g_\ell}\otimes \lambda_{\tilde g_\ell}\, dg_\ell d\tilde g_\ell.
$$
With $\delta_e$ the Dirac mass at the neutral element, we have
$$
F=\Omega_\ell^*(\delta_e\otimes \delta_e).
$$
Since $\CJ\delta_e=\delta_e$, we need to consider
$$
\big(\G(\delta_e\otimes \delta_e)\big)(g_\ell,\tilde g_\ell)
=
\Delta_{G_\ell}(g_\ell)^{-1/2}\Op_\ell^*\big(\Op_\ell(\lambda_{g_\ell}\delta_e)D_\ell^{-1/2}\Op_\ell(\delta_e)\big)(\tilde g_\ell).
$$
Observe that $\lambda_{g_\ell}(\delta_e)=\delta_{g_\ell}$ and, by Corollary \ref{C2P1}, we have $T_\ell (\delta_{p_\ell n_\ell})=\delta_{p_\ell}\otimes \delta_{n_\ell}$. Also, we have
$\chi_\ell(\delta_{n_\ell})=\chi_\ell(n_\ell)\,\delta_{n_\ell}$, and
$\CF_\ell(\delta_{n_\ell})=\overline{\E_\ell}(\cdot ,n_\ell)\, J_{\tilde\alpha_\ell}(\cdot^{-1},n_\ell)^{1/2}$ by Remark \ref{FFD}.
Noticing lastly that $W_{P_\ell}(\delta_{p_\ell}\otimes\vf)=\delta_{p_\ell}\otimes\lambda_{p_\ell}\vf$, we get
$$
K_\ell(\delta_{g_\ell})=\chi_\ell(n_\ell)\big(\delta_{p_\ell}\otimes \overline{\E_\ell}(p_\ell^{-1}\cdot ,n_\ell)\, J_{\tilde\alpha_\ell}((p_\ell^{-1}\cdot)^{-1},n_\ell)^{1/2}\big).
$$
In particular, we have
$$
K_\ell(\delta_{e})=\delta_{e}\otimes  J_{\tilde\alpha_\ell}(\cdot^{-1},e)^{1/2}.
$$
From this and Corollary \ref{DMC} we easily conclude that the kernel  of $\Op_\ell(\lambda_{g_\ell}\delta_e)D_\ell^{-1/2}\Op_\ell(\delta_e)$ is given by
$$
\chi_\ell(n_\ell)\,\overline{\E_\ell}(p_\ell^{-1},n_\ell)\, J_{\tilde\alpha_\ell}(p_\ell,n_\ell)^{1/2}\big(\delta_{p_\ell}\otimes
J_{\tilde\alpha_\ell}(\cdot^{-1},e)^{1/2}\big).
$$
Therefore we get
\begin{multline*}
\big(\G(\delta_e\otimes \delta_e)\big)(g_\ell,\tilde g_\ell)\\
=\chi_\ell(n_\ell)\,\overline{\E_\ell}(p_\ell^{-1},n_\ell)\, J_{\tilde\alpha_\ell}(p_\ell,n_\ell)^{1/2}\,
\Delta_{G_\ell}(p_\ell)^{-1/2}\, \big(K^*(\delta_{p_\ell}\otimes J_{\tilde\alpha_\ell}(\cdot^{-1},e)^{1/2})\big)(\tilde g_\ell).
\end{multline*}
Now, because $J_{\tilde\alpha}(\cdot,e)$ is a quasi-character on $P_\ell$, we have
$$
W^*_{P_\ell}(\delta_{p_\ell}\otimes J_{\tilde\alpha_\ell}(\cdot^{-1},e)^{1/2})=J_{\tilde\alpha_\ell}(p_\ell^{-1},e)^{1/2}(\delta_{p_\ell}\otimes J_{\tilde\alpha_\ell}(\cdot^{-1},e)^{1/2}).
$$
Since, moreover, $\CF_\ell^*(  J_{\tilde\alpha_\ell}(\cdot^{-1},e)^{1/2})=\delta_e$, we deduce
$$
\big(\G(\delta_e\otimes \delta_e)\big)(g_\ell,\tilde g_\ell)=\chi_\ell(n_\ell)\,\overline{\E_\ell}(p_\ell^{-1},n_\ell)\, J_{\tilde\alpha_\ell}(p_\ell,n_\ell)^{1/2}\,
\Delta_{G_\ell}(p_\ell)^{-1/2}\, J_{\tilde\alpha_\ell}(p_\ell^{-1},e)^{1/2}\,\delta_{p_\ell}(\tilde g_\ell).
$$
Finally, since $\delta_{p_\ell}(\tilde g_\ell^{-1})=\delta_{p_\ell}(\tilde p_\ell^{-1})\,\delta_e(\tilde n_\ell)$, we have
\begin{multline*}
\big(\G(\delta_e\otimes \delta_e)\big)(g_\ell,\tilde g_\ell)=\Delta_{G_\ell}(\tilde g_\ell)^{-1/2}\,\big(\G(\delta_e\otimes \delta_e)\big)(\tilde g_\ell^{-1}g_\ell,\tilde g_\ell^{-1})\\
=\chi_\ell(n_\ell)\,\overline{\E_\ell}(\tilde p_\ell,n_\ell)\, J_{\tilde\alpha_\ell}(\tilde p_\ell^{-1},n_\ell)^{1/2}\,J_{\tilde\alpha_\ell}(\tilde p_\ell,e)^{1/2}\,
\delta_e(p_\ell)\,\delta_e(\tilde n_\ell),
\end{multline*}
which yields formula \eqref{FF}. Whether one can make a rigorous sense of this formula and justify the above computations in concrete examples, depends on regularity properties of the function~$J_{\tilde\alpha_\ell}$.

\subsection{The semi-classical limit}

In this final section we discuss the semi-classical limit of a rescaled version $\Omega_\theta$, $\theta\in\R^*$, of the dual $2$-cocycle we have constructed,
in the simplest case when $G={\rm GL}_2(\R)\ltimes\R^2$. It is not difficult to see that in this case the modular function $\Delta_P$ is also $\beta$-invariant.
Therefore  the dual cocycle has an extremely simple form (cf.~\cite{BGNT3}):
$$
\Omega^*=\int_{P\times N}
\chi_\ell(n)\,\overline{\E}(p,n)
\,\lambda_{n}\otimes \lambda_{p}\, dp\, dn,
$$
where the subgroups $P$ and $N$ are given by
\begin{align*}
P:=\bigg\{\begin{pmatrix} \begin{pmatrix}*&*\\0&*\end{pmatrix},&\!\!\!\!\begin{pmatrix}0\\0\end{pmatrix}\end{pmatrix}\bigg\},
\quad
N:=\bigg\{\begin{pmatrix} \begin{pmatrix}1&0\\ *&1\end{pmatrix},&\!\!\!\!\begin{pmatrix}*\\ *\end{pmatrix}\end{pmatrix}\bigg\}.
\end{align*}
More explicitly, we have:
$$
\Omega^*=\int
e^{-i\langle q_2^\flat\xi_{0,2}-\xi_{0,2},v_2\rangle}\,
e^{-i\langle q_1^\flat\xi_{0,1},\tilde\alpha_{q_2^{-1}}(v_1)\rangle}\,e^{i\langle\xi_{0,1},v_1\rangle}\,
\lambda_{v_2v_1}\otimes\lambda_{q_2q_1}\frac{dq_2\, dq_1}{|q_1|_{Q_2}}dv_2\, dv_1,
$$
where  the
 subgroups $Q_2$, $Q_1$, $V_2$ and $V_1$, are given by
\begin{align*}
&Q_2:=\bigg\{\begin{pmatrix} \begin{pmatrix}*&*\\0&1\end{pmatrix},&\!\!\!\!\begin{pmatrix}0\\0\end{pmatrix}\end{pmatrix}\bigg\},
\quad Q_1:=\bigg\{\begin{pmatrix} \begin{pmatrix}1&0\\0&*\end{pmatrix},&\!\!\!\!\begin{pmatrix}0\\0\end{pmatrix}\end{pmatrix}\bigg\},
\\
&V_2:=\bigg\{\begin{pmatrix} \begin{pmatrix}1&0\\0&1\end{pmatrix},&\!\!\!\!\begin{pmatrix}*\\ *\end{pmatrix}\end{pmatrix}\bigg\},
\quad V_1:=\bigg\{\begin{pmatrix} \begin{pmatrix}1&0\\ *&1\end{pmatrix},&\!\!\!\!\begin{pmatrix}0\\0\end{pmatrix}\end{pmatrix}\bigg\}.
\end{align*}

Writing an element of $ G$ as
$g=\begin{pmatrix} \begin{pmatrix}a&b\\c&z\end{pmatrix},&\!\!\!\!\begin{pmatrix}x\\y\end{pmatrix}\end{pmatrix}$, the elements of $P=Q_1\ltimes Q_2$ and
$N=V_1\ltimes V_2$ are of the form
$$
p=\begin{pmatrix} \begin{pmatrix}a&b\\0&z\end{pmatrix},&\!\!\!\!\begin{pmatrix}0\\0\end{pmatrix}\end{pmatrix}\quad\mbox{and}\quad
n=\begin{pmatrix} \begin{pmatrix}1&0\\c&1\end{pmatrix},&\!\!\!\!\begin{pmatrix}x\\y\end{pmatrix}\end{pmatrix}.
$$
A simple calculation gives the following formulas for the dressing actions:
$$
\beta_n(p)=\begin{pmatrix} \begin{pmatrix}a-bc&b\\0&\frac{za}{a-bc}\end{pmatrix},&\!\!\!\!\begin{pmatrix}0\\0\end{pmatrix}\end{pmatrix}
\quad\mbox{and}\quad
\alpha_p(n)=\begin{pmatrix} \begin{pmatrix}1&0\\\frac{zc}{a-bc}&1\end{pmatrix},&\!\!\!\!\begin{pmatrix}(a-bc)x+by\\ \frac{za}{a-bc}y\end{pmatrix}\end{pmatrix}.
$$
Therefore we deduce:
\begin{multline*}
\Omega^*=\int
e^{-i((a^{-1}-1)x-a^{-1}by)}\,
e^{-i(z^{-1}\frac{ac}{1-bc}-c)}\cdot\\
\lambda_{\begin{pmatrix} \begin{pmatrix}1&0\\c&1\end{pmatrix},&\!\!\!\!\begin{pmatrix}x\\y\end{pmatrix}\end{pmatrix}}
\otimes\lambda_{\begin{pmatrix} \begin{pmatrix}a&bz\\0&z\end{pmatrix},&\!\!\!\!\begin{pmatrix}0\\0\end{pmatrix}\end{pmatrix}}
 \frac{da\,db}{a^2} \frac{dz\,dc}{z^2}dx\,dy,
\end{multline*}
an expression which after the change of variables $a\mapsto a^{-1}$ and $z\mapsto z^{-1}$ becomes
\begin{multline*}
\Omega^*=\int
e^{-i((a-1)x-aby)}\,
e^{-i(z\frac{a^{-1}c}{1-bc}-c)}\\
\lambda_{\begin{pmatrix} \begin{pmatrix}1&0\\c&1\end{pmatrix},&\!\!\!\!\begin{pmatrix}x\\y\end{pmatrix}\end{pmatrix}}
\otimes\lambda_{\begin{pmatrix} \begin{pmatrix}a^{-1}&bz^{-1}\\0&z^{-1}\end{pmatrix},&\!\!\!\!\begin{pmatrix}0\\0\end{pmatrix}\end{pmatrix}}
 da\,db\,dz\,dc\,dx\,dy.
\end{multline*}

Rescaling the elements $\xi_{0,2}\in \hat V_2$ and $\xi_{0,1}\in \hat V_1$ by  $\theta/2\pi$, $\theta\in\R^*$, we get a family of dual cocycles:
\begin{multline}
\Omega^*_\theta=\theta^{-3}\int
e^{-\frac{2i\pi}\theta((a-1)x-aby)}\,
e^{-\frac{2i\pi}\theta(z\frac{a^{-1}c}{1-bc}-c)}\\
\lambda_{\begin{pmatrix} \begin{pmatrix}1&0\\c&1\end{pmatrix},&\!\!\!\!\begin{pmatrix}x\\y\end{pmatrix}\end{pmatrix}}
\otimes\lambda_{\begin{pmatrix} \begin{pmatrix}a^{-1}&bz^{-1}\\0&z^{-1}\end{pmatrix},&\!\!\!\!\begin{pmatrix}0\\0\end{pmatrix}\end{pmatrix}}
 da\,db\,dz\,dc\,dx\,dy.
 \end{multline}
 By the change of variables $c\mapsto\theta c$, $x\mapsto\theta x$ and $y\mapsto\theta y$, this becomes
 \begin{multline*}
\Omega^*_\theta =\int
e^{-2i\pi((a-1)x-aby)}\,
e^{-2i\pi(z\frac{a^{-1}c}{1-\theta bc}-c)}\\
\lambda_{\begin{pmatrix} \begin{pmatrix}1&0\\\theta c&1\end{pmatrix},&\!\!\!\!\begin{pmatrix}\theta x\\\theta y\end{pmatrix}\end{pmatrix}}
\otimes\lambda_{\begin{pmatrix} \begin{pmatrix}a^{-1}&bz^{-1}\\0&z^{-1}\end{pmatrix},&\!\!\!\!\begin{pmatrix}0\\0\end{pmatrix}\end{pmatrix}}
 da\,db\,dz\,dc\,dx\,dy.
\end{multline*}
We proceed with a formal Taylor expansion of $\Omega^*_\theta$  of the first order  in a neighborhood of  $\theta=0$. For this, we need to consider the following vector fields:
\begin{align*}
&A:=\frac d{dt} \lambda_{\begin{pmatrix} \begin{pmatrix}e^t&0\\0&1\end{pmatrix},&\!\!\!\!\begin{pmatrix}0\\0\end{pmatrix}\end{pmatrix}}\Bigg|_{t=0},\;
B:=\frac d{dt} \lambda_{\begin{pmatrix} \begin{pmatrix}1&t\\0&1\end{pmatrix},&\!\!\!\!\begin{pmatrix}0\\0\end{pmatrix}\end{pmatrix}}\Bigg|_{t=0},\;
C:=\frac d{dt} \lambda_{\begin{pmatrix} \begin{pmatrix}1&0\\t&1\end{pmatrix},&\!\!\!\!\begin{pmatrix}0\\0\end{pmatrix}\end{pmatrix}}\Bigg|_{t=0},\\
&Z:=\frac d{dt} \lambda_{\begin{pmatrix} \begin{pmatrix}1&0\\0& e^t\end{pmatrix},&\!\!\!\!\begin{pmatrix}0\\0\end{pmatrix}\end{pmatrix}}\Bigg|_{t=0},\;
X:=\frac d{dt} \lambda_{\begin{pmatrix} \begin{pmatrix}1&0\\0&1\end{pmatrix},&\!\!\!\!\begin{pmatrix}t\\0\end{pmatrix}\end{pmatrix}}\Bigg|_{t=0},\;
Y:=\frac d{dt} \lambda_{\begin{pmatrix} \begin{pmatrix}1&0\\0&1\end{pmatrix},&\!\!\!\!\begin{pmatrix}0\\ t\end{pmatrix}\end{pmatrix}}\Bigg|_{t=0}.
\end{align*}
We then get
\begin{align*}
\Omega^*_\theta&=(1\otimes 1)\int \bigg(\int e^{-2i\pi((a-1)x-aby)}\, e^{-2i\pi(za^{-1}-1)c}
dc\,dx\,dy\bigg) da\,db\,dz\\
&+ \theta\, C\otimes\int \bigg(\int c\, e^{-2i\pi((a-1)x-aby)}\,
e^{-2i\pi(za^{-1}-1)c}dc\,dx\,dy\bigg)\lambda_{\begin{pmatrix} \begin{pmatrix}a^{-1}&bz^{-1}\\0&z^{-1}\end{pmatrix},&\!\!\!\!\begin{pmatrix}0\\0\end{pmatrix}\end{pmatrix}}
 da\,db\,dz\\
&+  \theta\, X\otimes\int\bigg(\int x\, e^{-2i\pi((a-1)x-aby)}\,
e^{-2i\pi(za^{-1}-1)c}dc\,dx\,dy\bigg)\lambda_{\begin{pmatrix} \begin{pmatrix}a^{-1}&bz^{-1}\\0&z^{-1}\end{pmatrix},&\!\!\!\!\begin{pmatrix}0\\0\end{pmatrix}\end{pmatrix}}
 da\,db\,dz\\
&+  \theta\, Y\otimes\int\bigg(\int y\, e^{-2i\pi((a-1)x-aby)}\,
e^{-2i\pi(za^{-1}-1)c}dc\,dx\,dy\bigg)\lambda_{\begin{pmatrix} \begin{pmatrix}a^{-1}&bz^{-1}\\0&z^{-1}\end{pmatrix},&\!\!\!\!\begin{pmatrix}0\\0\end{pmatrix}\end{pmatrix}}
 da\,db\,dz\\
&-2i\pi \theta\otimes\int \frac{zb}a \bigg(\int c^2
e^{-2i\pi((a-1)x-aby)}\,
e^{-2i\pi(za^{-1}-1)c}dc\,dx\,dy\bigg)
\lambda_{\begin{pmatrix} \begin{pmatrix}a^{-1}&bz^{-1}\\0&z^{-1}\end{pmatrix},&\!\!\!\!\begin{pmatrix}0\\0\end{pmatrix}\end{pmatrix}}
 da\,db\,dz\\
&+ O(\theta^2).
\end{align*}
Computing the Fourier transforms  (in the sense of tempered distributions), we get
\begin{align*}
\Omega^*_\theta&=(1\otimes 1)\int\delta_1(a)\,\delta_0(b)\,\delta_1(z)\, da\,db\,dz\\
&+ \frac{i\theta}{2\pi}\, C\otimes\int \delta_1(a)\,\delta_0(b)\,\delta'_1(z)\,\lambda_{\begin{pmatrix} \begin{pmatrix}a^{-1}&bz^{-1}\\0&z^{-1}\end{pmatrix},
&\!\!\!\!\begin{pmatrix}0\\0\end{pmatrix}\end{pmatrix}}
 da\,db\,dz\\
&+  \frac{i\theta}{2\pi}\, X\otimes\int\delta'_1(a)\,\delta_0(b)\,\delta_1(z)\,\lambda_{\begin{pmatrix} \begin{pmatrix}a^{-1}&bz^{-1}\\0&z^{-1}\end{pmatrix},
&\!\!\!\!\begin{pmatrix}0\\0\end{pmatrix}\end{pmatrix}}
 da\,db\,dz\\
&+  \frac{i\theta}{2\pi}\, Y\otimes\int\delta_1(a)\,\delta'_0(b)\,\delta_1(z)\,\lambda_{\begin{pmatrix} \begin{pmatrix}a^{-1}&bz^{-1}\\0&z^{-1}
\end{pmatrix},&\!\!\!\!\begin{pmatrix}0\\0\end{pmatrix}\end{pmatrix}} da\,db\,dz\\
&+\frac{i \theta}{2\pi}\otimes\int \frac{zb}a \delta_1(a)\,\delta_0(b)\,\delta''_1(z)\,
\lambda_{\begin{pmatrix} \begin{pmatrix}a^{-1}&bz^{-1}\\0&z^{-1}\end{pmatrix},&\!\!\!\!\begin{pmatrix}0\\0\end{pmatrix}\end{pmatrix}}
 da\,db\,dz + O(\theta^2).
\end{align*}
The last  term of order one therefore vanishes, and we get
\begin{equation}
\label{SCL}
\Omega^*_\theta=1\otimes 1
-\frac{i\theta}{2\pi}\, \big(C\otimes Z + X\otimes A- Y\otimes B\big)+ O(\theta^2).
\end{equation}
This means that the Poisson bracket we are quantizing is (up to a scalar) given  by
\begin{multline*}
\{f_1,f_2\}_{P}= \\
(Af_1)(Xf_2) -(Xf_1)(Af_2)+(Zf_1)(Cf_2)-(Cf_1) (Zf_2)-(Bf_1)(Yf_2) +(Yf_1)(Bf_2)
\end{multline*}
for $f_1,f_2\in C^\infty_c(G)$.

\bigskip

\end{document}